\theoremstyle{plain} 
\newtheorem*{theo*}{Theorem}
\newtheorem{theo}{Theorem}[section] 
\newtheorem{prop}[theo]{Proposition}
\newtheorem{cor}[theo]{Corollary}
\newtheorem{lem}[theo]{Lemma}
\newtheorem*{con*}{Conjecture}
\theoremstyle{definition}
\newtheorem{defin}[theo]{Definition}
\newtheorem{ex}[theo]{Example}
\theoremstyle{remark}
\newtheorem{rem}[theo]{Remark}
\newcommand{\bilinear}{(\cdot|\cdot)}
\newcommand{\barspace}{\hspace{0.1cm}\left| \hspace{0.1cm}}
\newcommand{\Z}{\mathbb{Z}}
\newcommand{\Zplus}{\Z_{>0}}
\newcommand{\Zpluseq}{\Z_{\geq0}}
\newcommand{\R}{\mathbb{R}}
\newcommand{\T}{\mathbb{T}}
\newcommand{\C}{\mathbb{C}}
\newcommand{\A}{\mathcal{A}}
\newcommand{\Uone}{\operatorname{U}(1)}
\newcommand{\SU}{\operatorname{SU}(2)}
\newcommand{\SO}{\operatorname{SO}(3)}
\newcommand{\Diff}{\operatorname{Diff}}
\newcommand{\Vir}{\mathfrak{Vir}}
\begin{document}

\author{Sebastiano Carpi}
\thanks{S.C. is supported in part by the ERC advanced grant 669240 QUEST ``Quantum Algebraic
Structures and Models'' and GNAMPA-INDAM}
\address{Dipartimento di Economia, Universit\`a di Chieti-Pescara ``G. d'Annunzio'', Viale Pindaro, 42, 65127 Pescara, Italy\\
E-mail: {\tt s.carpi@unich.it}}
\author{Tiziano Gaudio}
\address{Department of Mathematics and Statistics, Lancaster University, Lancaster LA1 4YF, UK\\
E-mail: {\tt t.gaudio@lancaster.ac.uk}}
\author{Robin Hillier} 
\address{Department of Mathematics and Statistics, Lancaster University, Lancaster LA1 4YF, UK\\
E-mail: {\tt r.hillier@lancaster.ac.uk}}
\title[Classification of Unitary Vertex Subalgebras and Conformal Subnets]{Classification of Unitary Vertex Subalgebras and Conformal Subnets for Rank-One Lattice Chiral CFT Models}
\date{August 26, 2019}

\begin{abstract}
We provide a complete classification of unitary subalgebras of even rank-one lattice vertex operator algebras. As a consequence of the correspondence between vertex operator algebras and conformal nets, we also obtain a complete classification of conformal subnets of even rank-one lattice conformal nets. 
\end{abstract}

\maketitle

\begin{section}{Introduction}

Two-dimensional conformal quantum field theory (2D CFT) is an exciting topic, both from the physical and from the mathematical point of view, with a variety of applications, cf. \cite{DiFMS1996,EK1998,Gannon2006} for some of them. The special subclass of chiral CFTs, namely those 2D CFTs containing right-moving fields only or left-moving fields only, are the building blocks of 2D CFT. These theories can be considered as quantum field theories on the real line (the light-ray) or, after compactification, on the circle $S^1$. 

There are mainly two mathematical axiomatizations of chiral CFT, one of them goes via conformal nets, the other one via vertex operator algebras (VOAs). 

A conformal net is a family of von Neumann algebras indexed by the intervals of $S^1$. These algebras should be regarded as the algebras generated by bounded versions of smeared fields localized in the respective interval. Moreover, the family should satisfy a number of axioms motivated by physics, in particular it should carry an action of the (orientation-preserving) diffeomorphism group $\Diff^+(S^1)$. The overall framework in this approach is that of algebraic quantum field theory (AQFT), cf. \cite{Haag} for an overview.

The second approach goes via vertex operator algebras. A vertex operator is a certain formal Laurent series and is thought of as a quantum field. Together they give rise to a vertex operator algebra, which formally consists of a vector space together with a state-field correspondence and some additional structure necessary to make things work. This additional structure is an axiomatization of physical properties but the axioms here are different from the ones of conformal nets and can be seen rather as an algebraic formulation of the Wightman axioms, see e.g. \cite{Kac1997}.

Various problems that can be solved in the conformal net approach are still open in the VOA approach and vice versa. It is generally believed that these two framework should be more or less equivalent in the unitary case but for a long time a direct general connection between those two approaches had not been clear. However, a significant step forward has been achieved recently in \cite{CKLW2015} where a map from a suitable class of unitary VOAs, the class of strongly local VOAs,  to the class of conformal nets has been defined. Moreover, many examples of unitary VOAs have been shown to be strongly local and it is conjectured that the map gives in fact a one-to-one correspondence between (simple) unitary VOAs and (irreducible) conformal nets, see also \cite{Kaw2015} for an overview of these results and related conjectures. Further progress in the connection of these two mathematical frameworks for chiral CFT have been made in \cite{CWX,Gui2017a,Gui2017b,Tener2016,Tener2018}.

In both approaches there is a natural notion of subtheory of a given chiral CFT. These subtheories are called conformal subnets or unitary vertex subalgebras (or simply unitary subalgebras) depending on the chosen framework. For a conformal net $\mathcal{A}_V$ that comes from a strongly local VOA $V$, it has been shown in \cite{CKLW2015} that the conformal subnets $\mathcal{A}\subset\mathcal{A}_V$ are in one-to-one correspondence with the unitary subalgebras $W\subset V$. 
The study and the classification of these subtheories is a very natural problem in either of the two approaches. One of the reasons is that subtheories give rise to new models from known ones. Typical examples are the orbifold models or the coset models but the study of subtheories could give new exotic examples besides these standard constructions.  For example Evans and Gannon in \cite{EG2011} argued in favour of the existence of ``exotic'' subtheories of certain affine Lie algebra chiral CFT models that should be related in an appropriate sense to the Haagerup subfactor. 

In the AQFT setting the structure of (Haag dual) subnets of a given local  Poincar\'e covariant net on the four-dimensional Minkowski space-time is quite well understood under rather mild and natural assumptions \cite{CC2001a,CC2001b,CC2005,CDR2001}. Here the subnet structure can be completely described in terms of fixed point subnets under actions of compact groups of inner symmetries (compact orbifolds) and tensor product decompositions, after replacing the net, if necessary, with its maximal extension namely the canonical field net of Doplicher and Roberts
\cite{DoRo90}. Moreover, starting from the general description one can describe the subnet structure in a very explicit way for many free field models.  These results rely heavily on Doplicher-Roberts abstract duality for compact groups \cite{DoRo89a,DoRo89b}, on which the construction of the canonical field net in \cite{DoRo90} is based. In particular, they rely on the permutation symmetry of the superselection sectors and hence they do not apply in low space-time dimensions because of the presence of nontrivial braid group relations. 

In the conformal net setting there are various abstract results on the general structure of conformal subnets, see e.g. \cite{Bischoff2017b,CKL2010,DVG2017,JonesXu04,Longo2003,LR1995,Xu2014} but it is not clear how to use them in order to get explicit classification results in concrete models. For example it has been shown in \cite{Bischoff2017b} that the finite index subnets of a given net can be completely described in terms of hypergroup actions but it is not clear how to determine the structure of these hypergroup actions without knowing the subnet structure a priori. 

Actually, only few results appear to be known for the classification of conformal subnets in concrete models, cf. \cite{Carpi1998,Carpi1999} and \cite[Subsection 6.3]{KL2004}.  In more detail, it is known that a Virasoro net has no proper nontrivial conformal subnet \cite{Carpi1998},  that all the nontrivial subnets of the loop group net associated to the level one vacuum representation of the loop group for $\mathrm{SU}(2)$ arise as compact orbifolds  for closed subgroups of the automorphism group $\mathrm{SO}(3)$ of the net \cite{Carpi1999}. Moreover, the finite index conformal subnets of all the conformal nets with central charge $c<1$ are classified in \cite{KL2004}. The results in \cite{Carpi1998,Carpi1999} have a (unitary) VOA analogue \cite{CKLW2015}, see also \cite{DongGriess1998}. This is not surprising in view of the fact than the underlying VOAs are known to be strongly local so one can directly apply the correspondence between conformal subnets and unitary subalgebras. Concerning the unitary VOAs with $c<1$,  strong locality results are presently available only in some special cases but it should be possible to obtain the analogous results of \cite[Subsection 6.3]{KL2004} directly from the classification of pre-unitary VOAs with $c<1$ given in \cite{DongLin2015}.

One immediate example of a $c=1$ model consists of the $\Uone$-current algebra CFT, also called free boson CFT. Its extensions have been classified in \cite{BMT1988} in the conformal net setting. They can be expressed as the conformal nets $\A_{\Uone_{2N}}$ or as the simple unitary VOAs $V_{L_{2N}}$, respectively, associated to an even rank-one lattice $\alpha\Z$ with generator $\alpha$ satisfying $(\alpha|\alpha)=2N$, where $N\in\Zplus$ labels the lattices uniquely. 
These models have been studied e.g. in \cite{BMT1988,Xu2005,DX2006} or \cite{Dong1993,DongGriess1998}, respectively.

It is widely believed that these models, together with their (compact, possibly finite) orbifolds exhaust all possible unitary chiral CFTs with $c=1$, see e.g. \cite{Dijk1989,Ginsparg,Kiritsis}.  This fact has been proved in the conformal net setting by Xu \cite {Xu2005} under an assumption called ``Spectrum Condition'', see also \cite{Carpi2004} for some special cases of these results. This assumption is verified in all the above $c=1$ conformal nets but its general validity is still an open problem. 
On the VOA side recent progress on the classification of rational unitary $c=1$ chiral CFTs has been made in 
\cite{DongJiang2013,DongJiang2015} but again the problem is still open. 

In this paper we completely classify the conformal subnets and the unitary vertex subalgebras of all currently known chiral CFT models with central charge $c=1$. We first give the classification in the unitary VOA framework. We only use (unitary) VOA arguments without reference to strong locality or conformal nets.  To this end we rely on previous results by Dong and Griess \cite{DongGriess1998}. 
We show that if $N$ is not the square of a positive integer and $N\neq 2$ then every nontrivial unitary vertex subalgebra of $V_{L_{2N}}$  is either an orbifold  $V_{L_{2N}}^H$ for some closed subgroup of the compact group of unitary VOA automorphisms $D_\infty$  of $V_{L_{2N}}$ or it coincides with the Virasoro subalgebra generated by the $c=1$ conformal vector. If $N=2$, besides the compact orbifolds and the $c=1$ Virasoro subalgebra there is a further explicitly described continuous family  
$\{W_t:\, t\in \T\}$ of unitary subalgebras, all isomorphic to the $c=1/2$ simple Virasoro VOA, but there are no further nontrivial unitary subalgebras. For $N=1$ it is well known that $V_{L_{2N}}$ is isomorphic to the affine unitary vertex operator algebra $V_{\mathfrak{sl}(2,\mathbb{C})_1}$ associated to $\mathfrak{sl}(2,\mathbb{C})$ at level one. We show that 
every nontrivial unitary subalgebra is an orbifold  $V_{L_{2}}^H$ for some closed subgroup of the compact group of unitary VOA automorphisms $\mathrm{SO}(3)$  of $V_{L_{2}}$. As already mentioned the latter result was already known but our proof is slightly different from the one given in \cite{CKLW2015} in order to have a purely VOA argument. Finally, for every integer $k>1$, the classification of the unitary subalgebras $V_{L_{2k^2}}$ follows from the unitary embedding 
$V_{L_{2k^2}} \subset V_{L_{2}}$.  The results on the classification of conformal subnets are then obtained using the correspondence between simple unitary subalgebras and conformal subnets given in \cite{CKLW2015} and the fact that the rank-one lattice VOAs are known to be strongly local. In particular, our results show that no new $c=1$ chiral CFT model can be found by taking subtheories of the currently known $c=1$ chiral CFTs, in agreement with the common belief.

This paper is organized as follows. We start in Section \ref{section:prelim} by recalling some basics about unitary VOAs in general and the rank-one lattice type models in particular. Section \ref{section:compl_class} deals with the main part of the classification in the VOA setting. Finally, Section \ref{section:conformal_nets} translates those results into the language of conformal nets, recalling and using the correspondence between the two settings developed in \cite{CKLW2015}. 

\end{section}

\begin{section}{Preliminaries}\label{section:prelim}
This section is dedicated to setting the notation we use throughout the paper and to introducing some preliminaries about unitary vertex operator algebras (VOAs) and even rank-one lattice type VOAs. References are given at the beginning of every subsection when required.


\begin{subsection}{Unitary vertex operator algebras}  \label{subsec:unitary_voas}
The present introduction to unitary vertex operator algebras follows mostly the structure and the notation given in \cite[Section 4, 5]{CKLW2015}, see also \cite[Chapter 2, 6]{LepLi2012}, \cite[Chapter 4]{Kac1997}.

Given a vector space $V$ on $\mathbb{C}$, consider the space of \textit{doubly-infinite formal Laurent series} in the variable $z$ on $\mathbb{C}$ with coefficients in $\mathrm{End}V$ (see \cite[Chapter 2]{LepLi2012})
\begin{equation*}
\left(\mathrm{End}V\right)\left[[z,z^{-1}]\right]:=
\left\{ a(z):=\sum_{n\in\mathbb{Z}}a_{(n)}z^{-n-1}
\mid
a_{(n)}\in\mathrm{End}V
 \right\} .
\end{equation*}

Then, a \textit{vertex algebra} is defined by a quadruple $(V,\Omega, Y, T)$:

\begin{itemize}
\item $\Omega\in V$ is called the \textit{vacuum vector} of $V$
\item $T\in\mathrm{End}V$ is called the \textit{infinitesimal translation operator} of $V$. 
\item $Y$ is a linear map between $V$ and $\left(\mathrm{End}V\right)\left[[z,z^{-1}]\right]$ called the \textit{state-field correspondence}, defined by
\begin{equation*}
a\longmapsto Y(a,z):=\sum_{n\in\mathbb{Z}}a_{(n)}z^{-n-1}
\end{equation*}
with the following properties:
\begin{itemize}
\item \textit{(Field)}. For any $a \in V$, $Y(a,z)$ is a \textit{field}, namely, for all $b \in V$ there exists an integer $N\geq0$ such that $a_{(n)}b=0$ for all $n\geq N$.
\item \textit{(Translation covariance)}. $[T,Y(a,z)]=\frac{\mathrm{d}}{dz}Y(a,z)$ for all $a\in V$.
\item \textit{(Vacuum)}. $T\Omega=0$, $Y(\Omega, z)=I_V$, $a_{(-1)}\Omega= a$ for all $a\in V$.
\item \textit{(Locality)}. For all $a, b\in V$, there exist $N>0$ such that (see \cite[Chapter 2]{Kac1997} for formal calculus for fields)
\begin{equation*}
(z-w)^N[Y(a,z), Y(b,w)]=0 .
\end{equation*}
\end{itemize}
We call such a field $Y(a,z)$ a \textit{vertex operator}.
\end{itemize}

A \textit{vertex operator algebra} or just \textit{VOA} is a vertex algebra $\left(V, \Omega, Y, T\right)$ with an element $\nu\in V$, called \textit{the conformal vector} of $\left(V, \Omega, Y, T\right)$, such that:

\begin{itemize}
\item $\nu$ is a \textit{Virasoro vector with central charge} $c\in\C$, which means that $Y(\nu, z)=\sum_{n\in\mathbb{Z}}L_nz^{-n-2}$, where $L_n:=\nu_{(n+1)}$, is a \textit{Virasoro field}, i.e.,  the endomorphisms $L_n$ satisfy the following commutation relations:
\begin{equation}  \label{eq:VirasoroCRs}
\left[L_m, L_n\right]=(m-n)L_{m+n}+ \frac{c(m^3-m)}{12}\delta_{m,-n}
\quad\forall m, n\in\mathbb{Z} 
 \, .
\end{equation}
\item $\nu$ is a \textit{conformal vector}, which means that $L_{-1}=T$ and $L_0$ is diagonalizable on $V$. In this case, $Y(\nu,z)$ and $L_0$ are called the \textit{energy-momentum field} and the \textit{conformal Hamiltonian} of $(V, \Omega, Y, T, \nu)$ respectively.
\item The following conditions must hold:
\begin{itemize}
\item $V=\bigoplus_{n\in\mathbb{Z}}V_n$, where $V_n:=\mathrm{Ker}\left\{L_0-n I_V\right\}$;
\item $\dim V_n<\infty$;
\item there exists an $N\in\mathbb{Z}$ such that $V_n=0$ for all $n\leq N$.
\end{itemize}
\end{itemize}

We will often indicate a VOA $(V, \Omega, Y, T, \nu)$ just with the vector space $V$.

An \textit{antilinear/linear automorphism} $g$ of a VOA $V$ is a vector space antilinear/linear automorphism such that 
\begin{equation*}
g(\Omega)=\Omega, \hspace{0.3cm}
g(\nu)=\nu, \hspace{0.3cm}
g(a_{(n)}b)=g(a)_{(n)}g(b)   \hspace{0.3cm}
\forall a, b\in V.
\end{equation*}
We define $\mathrm{Aut} V$ the group of linear automorphisms of $V$, which has a natural structure of a topological group (see \cite[Section 4.3]{CKLW2015}). In a similar manner, one defines a \textit{linear isomorphism} between two VOAs.

A VOA $V$ is of \textit{CFT type} if $V_n=0$ for all $n<0$ and $V_0=\mathbb{C}\Omega$. 

If $a\in V_n$, we say that $a$ is \textit{homogeneous} of \textit{conformal weight} $d_a:=n$. In this case, we also use the following notation:
\begin{equation*}
Y(a,z)=\sum_{n\in\mathbb{Z}} a_nz^{-n-d_a} ,
\hspace{0.5cm}
a_n:=a_{(n+d_a-1)}.
\end{equation*}

We call an element $a\in V$ \textit{primary} if $L_na=0$ for all $n\geq 1$ and \textit{quasi-primary} if $L_1a=0$. Therefore, using the fact that (see \cite[Section 4.1]{CKLW2015})
\begin{equation}   \label{Omega_is_0}
a_{(n)}\Omega=0, \quad a\in V, n\geq 0 ,
\end{equation}
we have that $\Omega$ is a primary vector in $V_0$. Furthermore, it is possible to prove that $\nu\in V_2$ and that it is a quasi-primary vector which is not primary if $c\not=0$ (see \cite[Section 4.1, 4.2]{CKLW2015}). 

To define a unitary structure on a VOA $V$, we recall some facts and definitions, which the reader may find in more detail in \cite[Section 4.3 and 5.1]{CKLW2015} and \cite[Section 5.2]{FHL1993}.
First of all, we say that a bilinear form $(\cdot, \cdot)$ on $V$ is \textit{invariant} if
\begin{equation*}
(Y(a,z)b, c)=(b,Y(e^{zL_1}(-z^{-2})^{L_0}a, z^{-1})c)
\hspace{0.5cm} 
a,b,c\in V.
\end{equation*}
In particular $(a,L_nb)=(L_{-n}a,b)$ for all $n\in\mathbb{Z}$.
Consequently, we say that a scalar product $(\cdot|\cdot)$ on $V$ (a positive-definite sesquilinear form, linear in the second variable) is \textit{invariant} if there exists an antilinear automorphisms $\theta$ of $V$, called a \textit{PCT operator} for $V$, which makes $(\theta(\cdot)|\cdot)$ an invariant bilinear form. Moreover, a scalar product on $V$ is said to be \textit{normalized} if $(\Omega|\Omega)=1$. 
We point out (see \cite[Proposition 5.1]{CKLW2015}) that for every normalized invariant scalar product on a VOA $V$, there exists a unique PCT operator.

We can now define a \textit{unitary vertex operator algebra} as a VOA $V$ equipped with an invariant normalized scalar product $(\cdot|\cdot)$. In this case, we say that a linear automorphism $g$ is \textit{unitary} if $(g(a)|g(b))=(a|b)$ for all $a,b\in V$, and we define $\mathrm{Aut}_{(\cdot|\cdot)}V$ as the group of unitary linear automorphisms; it is a compact subgroup of $\mathrm{Aut}V$ (see \cite[Lemma 5.20]{CKLW2015}). Similarly, one defines a \textit{unitary isomorphism} between two unitary VOAs.

An \textit{ideal} of a vertex algebra $V$ is a $T$-invariant vector subspace $\mathcal{I}\subseteq V$ such that $a_{(n)}b\in\mathcal{I}$ for all $a\in V$, $b\in\mathcal{I}$ and $n\in\Z$. By \cite[(4.3.1)]{Kac1997}, we also have $b_{(n)}a\in\mathcal{I}$. A vertex algebra $V$ is \textit{simple} if $V$ and $\left\{0\right\}$ are the only ideals.
Furthermore, by \cite[Proposition 5.3]{CKLW2015}, a unitary VOA $V$ is simple if and only if $V_0=\mathbb{C}\Omega$. It follows that every simple unitary VOA is of CFT type (cf. \cite[Remark 4.5]{CKLW2015}).

A \textit{vertex subalgebra} of a vertex algebra $V$ is a vector subspace $W$ of $V$ such that $\Omega\in W$ and $a_{(n)}b\in W$ for all $a,b\in W$. By \cite[p. 24]{CKLW2015}, every vertex subalgebra $W$ is $T$-invariant, thus $W$ is a vertex algebra too. Then we have the following definition: 

\begin{defin}  \label{def:unitary_subalgebra}
A \textit{unitary vertex subalgebra} of a unitary VOA $V$ with PCT operator $\theta$ is a vertex subalgebra $W$ of $V$ such that $\theta(W)\subseteq W$ and $L_1(W)\subseteq W$. For convenience we call a unitary vertex subalgebra simply a \textit{unitary subalgebra}.
\end{defin}

For convenience we have chosen to define unitary subalgebras by Definition \ref{def:unitary_subalgebra} instead of the standard one \cite[Definition 5.22]{CKLW2015}. In any case, these are equivalent thanks to \cite[Proposition 5.23]{CKLW2015}. Note also that, by \cite[Definition 5.22]{CKLW2015}, $L_0 (W)\subseteq W$ for any unitary subalgebra $W$.

Let $S$ be a subset of a unitary VOA $V$. We indicate with $W(S)$ the smallest vertex subalgebra of $V$ containing $S$ and we say that $S$ \textit{generates} a vertex subalgebra of $V$. Moreover, if $W(S)$ is a unitary subalgebra as in Definition \ref{def:unitary_subalgebra}, then we say that $S$ \textit{generates} a unitary subalgebra of $V$.

For further use, we state the following results about unitary subalgebras from \cite[Proposition 5.29]{CKLW2015}.
\begin{prop}  \label{prop:cklw_unitary_subvoa}
Let $(V, \Omega, T, Y, \nu, (\cdot|\cdot))$ be a simple unitary VOA and $W$ be a unitary subalgebra of $V$. Consider the projection $e_W$ into $W$ and set $\omega:=e_W(\nu)$ with $Y(\omega, z)=\sum_{n\in\mathbb{Z}}L_n^\omega z^{-n-2}$ the corresponding field. Then
\begin{itemize}
\item[(i)] $\theta(\omega)=\omega$.
\item[(ii)] $\omega$ is a Virasoro vector of $V$ and $L_n^\omega|_W=L_n|_W$ for $n\in\left\{-1,0,1\right\}$.
\item[(iii)] $\omega$ is a conformal vector for $W$ and $(W, \Omega, T, Y, \omega, (\cdot|\cdot))$ is a simple unitary VOA with PCT operator $\theta|_W$.
\end{itemize}
\end{prop}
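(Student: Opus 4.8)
The plan is to prove Proposition \ref{prop:cklw_unitary_subvoa} essentially by tracking how the conformal structure of $V$ restricts to $W$ via the orthogonal projection $e_W$. Throughout I would use that $W$ is $L_1$-invariant and $\theta$-invariant by Definition \ref{def:unitary_subalgebra}, that $L_0$-invariance follows from \cite[Definition 5.22]{CKLW2015} as noted, and that $V$ simple unitary implies $V$ is of CFT type. First, for part (i): since $\nu \in V_2$ and $\theta$ fixes $\nu$ and preserves the grading (being an automorphism, it commutes with $L_0$), and since $W$ is $\theta$-invariant and graded, $e_W$ commutes with $\theta$ on $V_2$; hence $\theta(\omega) = \theta(e_W(\nu)) = e_W(\theta(\nu)) = e_W(\nu) = \omega$. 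One should check that $e_W$, being the orthogonal projection onto a graded $\theta$-invariant subspace, intertwines $\theta$ — this uses that $\theta$ is antiunitary for the scalar product (equivalently that $(\theta\cdot|\theta\cdot)=(\cdot|\cdot)$, which holds because $\theta$ is an antilinear automorphism of a unitary VOA), so $\theta$ maps $W^\perp$ to $W^\perp$ as well.

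For part (ii), the key identities are $L_n^\omega|_W = L_n|_W$ for $n \in \{-1,0,1\}$ and the Virasoro relations for the modes $L_n^\omega = \omega_{(n+1)}$. The restriction identities I would get from the invariance of the scalar product: for $a,b \in W$ and $n \in \{-1,0,1\}$, one has $(a \mid L_n^\omega b) = (a \mid \omega_{(n+1)} b)$, and using the invariant bilinear form $(\theta(\cdot),\cdot)$ together with the quasi-primarity of $\nu$ (recall $L_1\nu = 0$, and $L_{-1}\nu = T\nu$, $L_0\nu = 2\nu$) one can replace $\omega = e_W(\nu)$ by $\nu$ inside such expressions because the ``correction'' $\nu - \omega \in W^\perp$ pairs trivially against vectors of $W$ after one moves the relevant mode across — more precisely $(a \mid (e_W\nu)_{(n+1)} b) = (a \mid \nu_{(n+1)} b)$ for $a,b\in W$, $n\in\{-1,0,1\}$, which holds since $\nu_{(n+1)}$ for these three values of $n$ maps $W$ into $W$ (by $L_{-1},L_0,L_1$-invariance of $W$) and one can take adjoints. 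Then the Virasoro relations for $L_n^\omega$ on all of $V$ would be derived from the general fact (quotable from \cite{CKLW2015} or proved directly) that $\omega$ being the image of a Virasoro vector under such a projection is again a Virasoro vector; alternatively one uses that $\omega$ is quasi-primary (from $L_1\omega = L_1^\omega\omega$ computed on $W$, plus $\theta(\omega)=\omega$) and that a quasi-primary vector of conformal weight $2$ generating, with $\Omega$, a representation of the Virasoro algebra on $W$, automatically satisfies the relations with some central charge $c_\omega$. The central charge $c_\omega$ equals $2(\omega|\omega) = 2(\nu|e_W\nu) = 2\|e_W\nu\|^2 \le 2\|\nu\|^2 = c$ if normalized suitably — though for this proposition we only need that it is \emph{some} complex (in fact nonnegative real) number.

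For part (iii), I would verify the VOA axioms for $(W,\Omega,T,Y,\omega,(\cdot|\cdot))$: $W$ is a vertex subalgebra hence a vertex algebra with the same $\Omega,T,Y$; by (ii) $\omega$ is a Virasoro vector and $L_{-1}^\omega|_W = L_{-1}|_W = T|_W$ and $L_0^\omega|_W = L_0|_W$ is diagonalizable on $W$ with the grading $W_n = W \cap V_n$ inherited from $V$, so the grading, finite-dimensionality of homogeneous subspaces, and lower-boundedness are all inherited from $V$; thus $\omega$ is a conformal vector for $W$ and $W$ is a VOA. Unitarity: the restriction of $(\cdot|\cdot)$ to $W$ is a normalized scalar product, and it is invariant with PCT operator $\theta|_W$ — here one must check that $\theta|_W$ is an antilinear automorphism of the VOA $W$ (it preserves $\Omega$, preserves $\omega$ by (i), and preserves the state-field correspondence since $\theta$ does and $W$ is $\theta$-invariant) and that $(\theta|_W(\cdot),\cdot)$ is an invariant bilinear form for $W$ with respect to $\omega$; the invariance formula for $W$ involves $e^{zL_1^\omega}(-z^{-2})^{L_0^\omega}$ acting on $W$, which by (ii) agrees with $e^{zL_1}(-z^{-2})^{L_0}$ on $W$, so invariance for $W$ follows from invariance for $V$ restricted to $W$. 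Finally simplicity of $W$ follows from \cite[Proposition 5.3]{CKLW2015} since $W_0 = W \cap V_0 = W \cap \mathbb{C}\Omega = \mathbb{C}\Omega$. The main obstacle I anticipate is the bookkeeping in (ii) — carefully justifying that projecting $\nu$ onto $W$ and then taking modes $\omega_{(n+1)}$ for $n\in\{-1,0,1\}$ genuinely reproduces $L_n$ on $W$, i.e. handling the orthogonality argument with the invariant bilinear form (which is \emph{not} the scalar product but its composition with $\theta$) correctly, and then upgrading ``correct on $W$'' to the full Virasoro relations of $\omega$ as an operator on $V$; everything else is inheritance of structure.
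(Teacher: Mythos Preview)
The paper does not prove this proposition; it merely quotes it as \cite[Proposition~5.29]{CKLW2015}. So there is nothing in the paper to compare against, and your sketch is already more than what the paper provides.

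Your outline is essentially the argument of \cite{CKLW2015}, and parts (i) and (iii) are fine. The one genuine soft spot is the core of (ii): to obtain $L_n^\omega b=L_n b$ for $b\in W$ and $n\in\{-1,0,1\}$, the justification ``$\nu_{(n+1)}$ maps $W$ into $W$ and one can take adjoints'' is circular as stated --- taking adjoints only converts $(a\mid L_n^\omega b)=(a\mid L_n b)$ into $(L_{-n}^\omega a\mid b)=(L_{-n}a\mid b)$, which is the same assertion with $n$ replaced by $-n$. What is actually needed is that $W^\perp$ is a $W$-submodule of $V$: for $b,c\in W$ and $d\in W^\perp$ one has $(c\mid b_{(k)}d)=((b_{(k)})^*c\mid d)=0$, because the adjoint $(b_{(k)})^*$ is a finite combination of modes of vectors obtained from $b$ by applying $\theta$ and powers of $L_1$, and these preserve $W$ by Definition~\ref{def:unitary_subalgebra}. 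Then, writing $\nu'=\nu-\omega\in W^\perp$, skew-symmetry $Y(\nu',z)b=e^{zT}Y(b,-z)\nu'$ gives $\nu'_{(n+1)}b\in W^\perp$ for every $n$; for $n\in\{-1,0,1\}$ this vector also lies in $W$ (since both $L_n b$ and $\omega_{(n+1)}b$ do), hence it vanishes. After that, your route to the full Virasoro relations on $V$ via $\omega_{(1)}\omega=2\omega$, $\omega_{(2)}\omega=0$, $\omega_{(3)}\omega\in W_0=\mathbb{C}\Omega$, and $\omega_{(k)}\omega\in V_{3-k}=0$ for $k\ge4$ (CFT type), together with the Borcherds commutator formula, is correct.
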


Now, some classical examples of unitary VOAs.

\begin{ex}  \label{ex:trivial_subalgebra}
For every unitary VOA $V$, the vector subspace $\C\Omega$ is a unitary subalgebra, which we call the \textit{trivial subalgebra}. Recall that $T\Omega=0$ and $Y(\Omega,z)=I_V$. Furthermore, $e_{\C\Omega}(\nu)=0$, $(a\Omega|b\Omega)=\overline{a}b$ and $\theta(a\Omega)=\overline{a}\Omega$ for all $a,b\in\C$.
\end{ex}

\begin{ex}\label{ex:orbifold} (See \cite[Example 5.25]{CKLW2015}).
Let $G\subseteq\mathrm{Aut}_{(\cdot|\cdot)}V$ be a closed subgroup. Then
\begin{equation*}
V^G:=\left\{v\in V \barspace
g(v)=v 
\hspace{0.2cm} \forall g\in G
\right. \right\}
\end{equation*}
is a unitary subalgebra of $V$, called \textit{fixed point subalgebra}. Moreover, if $G$ is finite, $V^G$ is called \textit{orbifold subalgebra}. 
\end{ex}

\begin{ex}  \label{ex:virasoro_subalgebra}
By \cite[Example 5.24]{CKLW2015}, the conformal vector $\nu$ with central charge $c$ of a unitary VOA always generates a simple unitary subalgebra, called the \emph{Virasoro subalgebra} of $V$, indicated with $L(c,0)\subset V$. The admissible values of $c\in\C$ are the following ones:
$$
\begin{array}{lr}
 c\geq1, &  \\
c_m=1-\frac{6}{m(m+1)} \, , &
m\geq2.
\end{array}
$$
Moreover, $L(c,0)$ is a unitary VOA called the \emph{unitary Virasoro VOA with central charge} $c$, which can be directly constructed from the Virasoro Lie algebra of central charge $c$, namely the Lie algebra with formal generators $L_n$ and $1$ and commutation relations \eqref{eq:VirasoroCRs}, cf. \cite[Example 4.10]{Kac1997}, \cite[Section 6.1]{LepLi2012} and \cite[Lecture 3, 12.3]{KacRaina1987} for the construction and \cite[Section 4.1]{DonLin2014}, \cite[Example 5.6]{CKLW2015} for the unitary structure. The irreducible modules of $L(c,0)$ are denoted by $L(c,h)$ and arise from unitary irreducible positive-energy representations of the Virasoro Lie algebra with central charge $c$; $h$ denotes the lowest eigenvalue of $L_0$ on that module. The admissible values of $c$ and $h$, which completely determine each unitary irreducible positive-energy representation, are:
\[
\begin{array}{r@{\quad \textrm{and} \quad}l}
c\geq1 & h\geq0 \\
c_m=1-\frac{6}{m(m+1)} &
h_{p,q}(m)=\frac{((m+1)p-mq)^2-1}{4m(m+1)}
\end{array}
\]
for all $m\geq2$ and $1\leq q\leq p\leq m-1$, cf. also \cite[Lecture 8.4]{KacRaina1987}.
\end{ex}

\begin{ex}  \label{ex:family_theta_inv}
By \cite[Example 5.26]{CKLW2015}, we have that a family of $\theta$-invariant quasi-primary vectors in a unitary VOA $V$ generates a unitary subalgebra. Thus, if $W$ is a unitary subalgebra of $V$, the conformal vector $\omega=e_W(\nu)$ as in Proposition \ref{prop:cklw_unitary_subvoa} generates a unitary subalgebra of $V$ unitarily isomorphic to a unitary Virasoro VOA $L(c,0)$ for some allowed value of $c$ as in Example \ref{ex:virasoro_subalgebra}.
\end{ex}

\end{subsection}


\begin{subsection}{Rank-one lattice type VOAs} \label{section:rank_one_lattice}

We recall from \cite[Section 2]{DongGriess1998}, \cite[Section 4]{DonLin2014}, \cite[Section 7.1]{FLM1989}, \cite[Section 5.4]{Kac1997} and \cite[Chapter 6]{LepLi2012} some facts about rank-one lattice type VOAs. Let $L$ be an even rank-one positive definite lattice and let $\bilinear$ be the corresponding non-degenerate symmetric bilinear form. We set $\mathfrak{h}:=\mathbb{C}\otimes_\mathbb{Z} L$ and indicate the extension of $\bilinear$ by bilinearity to $\mathfrak{h}$ with the same symbol. Then there exist $N\in\Zplus$ and $J\in\mathfrak{h}$, which we call \textit{current vector}, such that $(J|J)=1$ and $L\cong L_{2N}:=\mathbb{Z}\sqrt{2N}J$. Let 
$$
\widehat{\mathfrak{h}}:=\mathfrak{h}\otimes\mathbb{C}[t,t^{-1}]\oplus\mathbb{C}K
$$  
be the affine Lie algebra with generators $J_n:= J\otimes t^n$, for $n\in\Z$, and central element $K$, with the commutation relations 
\begin{equation} \label{cr_heisenberg_algebra}
\left[J_m,J_n\right]=m \delta_{m,-n}K.
\end{equation}
$\widehat{\mathfrak{h}}$ is also called \textit{Heisenberg} or \textit{oscillator algebra}. For $\alpha\in\mathfrak{h}$ and $n\in\Z$ we write $\alpha_n:= \alpha\otimes t^n\in\widehat{\mathfrak{h}}$. Let $U(\widehat{\mathfrak{h}})$ be the universal enveloping algebra of $\widehat{\mathfrak{h}}$ (see \cite[Chapter V]{Jac1979}). A linear basis for $U\left(\widehat{\mathfrak{h}}\right)$ is given by
\begin{equation}  \label{basis_elem_uea}
J_{n_1}\cdots J_{n_s} K^k 
\end{equation}
for $s,k\in\Zpluseq, n_i\in\Z$ such that $n_i\leq n_j$ if $i<j$.
We define the irreducible $\widehat{\mathfrak{h}}$-module
$$
M(1):= U\left(\widehat{\mathfrak{h}}\right)/\mathcal{I}
$$
where $\mathcal{I}$ is the ideal in $U\left(\widehat{\mathfrak{h}}\right)$ generated by
$$
\{(K-1), J_n : n\in\Zpluseq\} .
$$
Then from \eqref{basis_elem_uea} we deduce that $M(1)$ is linearly generated by the following classes of elements:
$$
[1]=[K^k],\; \left[J_{n_1}\cdots J_{n_s}\right]=\left[J_{n_1}\cdots J_{n_s}K^k\right], \quad n_1\leq\cdots\leq n_s< 0,\; k,s\in\Zpluseq,
$$
which we simply indicate with
\begin{equation} \label{basis_m1}
\Omega,\; J_{n_1}\cdots J_{n_s}\Omega 
\end{equation}
respectively.

Define the vector space
$$
V_{L_{2N}}:=M(1)\otimes_\mathbb{C} \mathbb{C}[L_{2N}],
$$
where $\mathbb{C}[L_{2N}]$ is the group algebra, which we describe as linearly generated by formal elements $e^\alpha$, for $\alpha\in L_{2N}$ and multiplication given by
\begin{equation} \label{eq:group_alg_mult}
e^\alpha e^\beta= e^{\alpha+\beta}.
\end{equation}
In particular $1:=e^0$ is the identity.  
Note that a linear set of generators for $V_{L_{2N}}$ is given by
\begin{equation}  \label{eq:generators_vl2n}
\left\{v\otimes e^\alpha \, |\,  v\in M(1), \, \alpha\in L_{2N}\right\}.
\end{equation}
Note also that for $v$ as in \eqref{basis_m1}, \eqref{eq:generators_vl2n} is a linear basis for $V_{L_{2N}}$.

Thus $V_{L_{2N}}$ has a VOA structure, which we sum up in the following theorem. For the proof see \cite[Theorem 5.5, Proposition 5.5]{Kac1997} and \cite[Theorem 6.5.3]{LepLi2012}. Note that in the above references, the construction of the VOA $V_L$ associated to an even lattice $L$ involves the twisted group algebra $\mathbb{C}_\epsilon[L]$ instead of the group algebra defined above. In $\mathbb{C}_\epsilon[L]$, the multiplication \eqref{eq:group_alg_mult} is twisted by a 2-cocycle $\epsilon$. In any case, it turns out that the resulting VOA structure is independent of the choice of the 2-cocycle $\epsilon$ for the twisting. Thus we are allowed to use the trivial 2-cocycle, which is an admissible choice for the case of even rank-one lattices $L_{2N}$.

\begin{theo}    \label{theo:structure_vu12n}
For all $N\in\Zplus$, $V_{L_{2N}}$ has a structure of simple VOA of CFT type with vacuum vector $\Omega\otimes 1$ and the following data:
\begin{itemize}
\item[(i)] The state-field correspondence is given by
\begin{eqnarray}
Y(\alpha_{-1}\Omega\otimes 1, z)&=&\alpha(z):=\sum_{j\in\mathbb{Z}} \alpha_j z^{-j-1}  \label{field1}\\
Y(\Omega\otimes e^\alpha, z) &=& e_\alpha z^{\alpha_0} E_+(\alpha, z) E_-(\alpha, z)  \label{field2}\\
E_+(\alpha, z) &:=& \exp\left(-\sum_{j<0} \frac{\alpha_j}{j}z^{-j}\right) \label{fieldE+}\\
E_-(\alpha, z) &:=& \exp\left(-\sum_{j>0} \frac{\alpha_j}{j}z^{-j}\right) \label{fieldE-} \\
Y(J_{n_1}\cdots J_{n_s}\Omega\otimes e^\alpha, z) &=&
:\prod_{j=1}^s(\partial^{(-n_j-1)}J(z))
Y(\Omega\otimes e^\alpha, z):  \, ,
\label{field3_product}
\end{eqnarray}
where $\alpha\in\mathfrak{h}$ in \eqref{field1}, whereas $\alpha\in L_{2N}$ in \eqref{field2}-\eqref{field3_product}. In \eqref{field3_product}, $:\cdot:$ indicates the normally-ordered product of fields. In formulas \eqref{field1} - \eqref{fieldE-}, $e_\alpha$ is the operator on $V_{L_{2N}}$ of left multiplication by $\Omega\otimes e^\alpha$, whereas operators $\alpha_j$ and $z^{\alpha_0}$ act on $v\otimes e^\beta\in V_{L_{2N}}$ in the following manner
\begin{eqnarray}   
\alpha_0(v\otimes e^\beta)&=& (\alpha|\beta)v\otimes e^\beta 
\label{formal_series_ext_0}.\\ 
\alpha_j(v\otimes e^\beta)&=& (\alpha_jv)\otimes e^\beta  \hspace{0.5cm} \forall j\in\mathbb{Z}\setminus\{0\}
\label{formal_series_ext_j}. \\
z^{\alpha_0}\left(v\otimes e^\beta\right) &=& z^{(\alpha|\beta)}v\otimes e^\beta 
 \label{action_z_alpha0}.
\end{eqnarray}
 
\item[(ii)] The conformal vector with central charge 1 of $V_{L_{2N}}$ is 
\begin{equation}   \label{conformal_vector}
\nu\otimes 1:= \frac{1}{2} \left(J_{-1}J_{-1}\Omega\otimes 1 \right) \, .
\end{equation}
Thus, the corresponding Virasoro subalgebra is $L(1,0)$. The energy-momentum field $$Y(\nu\otimes 1, z)=\sum_{m\in\mathbb{Z}}L_mz^{-m-2}$$ is defined through formulas
\begin{align}
L_0(J_{n_1}\cdots J_{n_s}\Omega\otimes e^\alpha)=&
\left(\frac{(\alpha|\alpha)}{2}-\sum_{j=1}^s n_j \right)
J_{n_1}\cdots J_{n_s}\Omega\otimes e^\alpha \label{vir_op_0}\\
L_m =&\frac{1}{2}\sum_{j\in\mathbb{Z}}J_j J_{m-j},\quad  \forall m\in\mathbb{Z}\setminus\{0\} \label{vir_op_m}
\end{align}
and it satisfies the commutation relations
\begin{align}
\left[L_m,J_n\right]=&-nJ_{m+n} \hspace{0.5cm}\forall m,n\in\Z   \label{cr_vir_cur} \\
\left[L_m, L_n\right]=& (m-n)L_{m+n}+ \frac{m^3-m}{12}\delta_{m,-n} \hspace{0.5cm}\forall m,n\in\Z \,.\label{cr_vir}
\end{align}
In particular, $L_{-2}\left(\Omega\otimes 1\right)=\nu\otimes 1$. Furthermore, the vectors $J_{-1}\Omega\otimes 1$ and $\Omega\otimes e^\alpha$ are primary of conformal weights 1 and $\frac{(\alpha|\alpha)}{2}$ respectively.
\end{itemize}
\end{theo}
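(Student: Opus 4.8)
The plan is to realise $V_{L_{2N}}$ as a rank-one lattice vertex algebra by the standard free-field construction, specialised to the trivial $2$-cocycle (legitimate here because $(\alpha|\beta)\in 2N\Z$ for all $\alpha,\beta\in L_{2N}$), and then to read off the conformal structure from the rank-one Sugawara formula. The organising tool is the existence/reconstruction theorem for vertex algebras (see \cite[Section 4.5]{Kac1997} and \cite[Chapter 6]{LepLi2012}): on the graded space $V_{L_{2N}} = M(1)\otimes_\C\C[L_{2N}]$ it suffices to produce a family of pairwise mutually local fields acting correctly on $\Omega\otimes 1$ whose iterated normally ordered products, together with the action of $T := L_{-1}$, span $V_{L_{2N}}$; such data then uniquely determine a vertex algebra structure realising these fields as the vertex operators of the corresponding states, and \eqref{field3_product} is the resulting expression on a general spanning vector. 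The generators are taken to be the current field $J(z) = \sum_{j\in\Z}J_jz^{-j-1}$ (producing the Heisenberg subalgebra $M(1)$) and the lattice fields $Y(\Omega\otimes e^\alpha, z) = e_\alpha z^{\alpha_0}E_+(\alpha,z)E_-(\alpha,z)$ for $\alpha\in L_{2N}$. The vacuum axioms are immediate, since $e_0 = z^0 = E_\pm(0,z) = I$ give $Y(\Omega\otimes 1,z) = I_{V_{L_{2N}}}$ and $T(\Omega\otimes 1) = 0$, the creation property being read off from leading terms; and each $Y(\Omega\otimes e^\alpha, z)$ is a field because, on a fixed $v\otimes e^\beta$, the annihilation operators $\alpha_j$ ($j>0$) in $E_-(\alpha,z)$ are locally nilpotent and $z^{\alpha_0}$ contributes only the single power $z^{(\alpha|\beta)}$, so only finitely many negative powers of $z$ occur.

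The crux of the argument, and the step I expect to be the main obstacle, is mutual locality of the generating fields, which splits into three cases. Two current fields satisfy $[J(z),J(w)] = \partial_w\delta(z-w)$, so $(z-w)^2[J(z),J(w)] = 0$; the mixed commutator $[J(z),Y(\Omega\otimes e^\beta, w)]$ has only a first-order singularity and is killed by $(z-w)$. The delicate case is $Y(\Omega\otimes e^\alpha, z)$ against $Y(\Omega\otimes e^\beta, w)$. Using that the $E_+$'s mutually commute, the $E_-$'s mutually commute, the Hadamard-type identity $E_-(\alpha,z)E_+(\beta,w) = E_+(\beta,w)E_-(\alpha,z)\,(1-w/z)^{(\alpha|\beta)}$ (a consequence of $[\alpha_j,\beta_k] = j\delta_{j,-k}(\alpha|\beta)$ and the Baker--Campbell--Hausdorff formula for central commutators), the relation $z^{\alpha_0}e_\beta = z^{(\alpha|\beta)}e_\beta z^{\alpha_0}$, and the commutativity $e_\alpha e_\beta = e^{\alpha+\beta} = e_\beta e_\alpha$ of the untwisted group algebra, one finds that $Y(\Omega\otimes e^\alpha, z)Y(\Omega\otimes e^\beta, w)$ and $Y(\Omega\otimes e^\beta, w)Y(\Omega\otimes e^\alpha, z)$ equal a common expression symmetric under $(\alpha,z)\leftrightarrow(\beta,w)$, multiplied respectively by $(z-w)^{(\alpha|\beta)}$ and $(w-z)^{(\alpha|\beta)}$. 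Since $(\alpha|\beta)\in 2N\Z$ is even, $(-1)^{(\alpha|\beta)} = 1$ and the two products literally coincide after multiplication by a sufficiently high power of $(z-w)$; this is exactly where rank-one evenness enters and why the trivial cocycle suffices. Taking $N$ larger than all poles occurring gives $(z-w)^N[Y(\Omega\otimes e^\alpha, z),Y(\Omega\otimes e^\beta, w)] = 0$, and the reconstruction theorem then supplies the vertex algebra structure and the state-field correspondence of part~(i), with translation covariance holding for $T = L_{-1}$.

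For part~(ii), put $\nu\otimes 1 := \tfrac12 J_{-1}J_{-1}\Omega\otimes 1$. Computing $Y(\nu\otimes 1, z)$ from \eqref{field3_product} gives modes $L_m = \nu_{(m+1)} = \tfrac12\sum_{j\in\Z}{:}J_jJ_{m-j}{:}$, which for $m\neq 0$ collapses to \eqref{vir_op_m} (the relevant brackets vanishing) while \eqref{vir_op_0} is the normally ordered $m=0$ case; the standard rank-one Sugawara computation then yields $[L_m,J_n] = -nJ_{m+n}$ and the Virasoro relations \eqref{cr_vir} with central charge $1$. One checks that $L_{-1}$ satisfies $[L_{-1},Y(a,z)] = \partial_zY(a,z)$ and annihilates $\Omega\otimes 1$, hence $L_{-1} = T$; and \eqref{vir_op_0} exhibits $L_0$ as diagonalisable with eigenvalue $\tfrac{(\alpha|\alpha)}{2} - \sum_j n_j \in \Zpluseq$ on $J_{n_1}\cdots J_{n_s}\Omega\otimes e^\alpha$ (recall $n_j<0$, $(\alpha|\alpha)/2\in\Zpluseq$ vanishing only for $\alpha = 0$ as $L$ is positive definite). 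Thus $V_{L_{2N}} = \bigoplus_{n\in\Zpluseq}(V_{L_{2N}})_n$ with finite-dimensional homogeneous components (only finitely many partitions and finitely many $\alpha\in L_{2N}$ of bounded norm at each level) and $(V_{L_{2N}})_0 = \C(\Omega\otimes 1)$, so $V_{L_{2N}}$ is a VOA of CFT type, whose Virasoro subalgebra is $L(1,0)$ by Example~\ref{ex:virasoro_subalgebra}. For the primary vectors, $L_n(J_{-1}\Omega\otimes 1) = [L_n,J_{-1}](\Omega\otimes 1) = J_{n-1}(\Omega\otimes 1)$, which vanishes for $n\geq 1$ and equals $J_{-1}\Omega\otimes 1$ for $n = 0$; a parallel mode computation gives $L_n(\Omega\otimes e^\alpha) = 0$ for $n\geq 1$ and $L_0(\Omega\otimes e^\alpha) = \tfrac{(\alpha|\alpha)}{2}(\Omega\otimes e^\alpha)$; and $L_{-2}(\Omega\otimes 1) = \tfrac12 J_{-1}J_{-1}\Omega\otimes 1 = \nu\otimes 1$. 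Finally, simplicity of $V_{L_{2N}}$ is the standard fact for positive definite lattices (\cite[Theorem 5.5]{Kac1997}, \cite[Theorem 6.5.3]{LepLi2012}): any nonzero ideal contains a nonzero homogeneous vector which, after acting by suitable modes of $J(z)$ and of the $Y(\Omega\otimes e^{\pm\sqrt{2N}J}, z)$, produces $\Omega\otimes 1$, so the ideal is everything.
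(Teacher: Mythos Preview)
Your proposal is correct and is precisely an unpacking of the standard lattice VOA construction that the paper itself does not prove but defers to \cite[Theorem~5.5, Proposition~5.5]{Kac1997} and \cite[Theorem~6.5.3]{LepLi2012}; your emphasis on why the trivial $2$-cocycle suffices in rank one matches the paper's remark preceding the theorem. One small cosmetic point: you reuse the symbol $N$ for the locality exponent in $(z-w)^N[\,\cdot\,,\,\cdot\,]=0$, which clashes with the paper's lattice parameter $N$; pick a different letter there.
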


From \cite[(7.1.42), (3.2.17) and (2.1.11)]{FLM1989}, we have that
\begin{eqnarray}
E_+(\alpha, z) &= &
I+\sum_{n=1}^{+\infty}\frac{\left(-\sum_{j<0} \frac{\alpha_j}{j}z^{-j}\right)^n}{n!} 
\label{e+_expansion}\\
\left(-\sum_{j< 0} \frac{\alpha_j}{j}z^{-j}\right)^n &=&
\sum_{m=n}^{\infty}
\left(\sum_{\substack{j_1+\cdots +j_n=m \\ j_k>0}} \frac{\alpha_{-j_1}\cdots\alpha_{-j_n}}{j_1\cdots j_n}\right) z^m 
\label{e+_moltiplication}\\
E_-(\alpha, z) &=&
I+\sum_{n=1}^{+\infty}\frac{\left(-\sum_{j>0} \frac{\alpha_j}{j}z^{-j}\right)^n}{n!} 
\label{e-_expansion}\\
\left(-\sum_{j> 0} \frac{\alpha_j}{j}z^{-j}\right)^n &=&
(-1)^n\sum_{m=n}^{\infty}
\left(\sum_{\substack{j_1+\cdots +j_n=m \\ j_k>0}} \frac{\alpha_{j_1}\cdots\alpha_{j_n}}{j_1\cdots j_n}\right) z^{-m}
\label{e-_multiplication}\, .
\end{eqnarray}

\begin{rem}   \label{rem:cr_e+-}
Note that the operators $e_\alpha$ and $z^{\alpha_0}$ commute with $E_\pm(\alpha, z)$ (see also 
\cite[Proposition 6.4.5]{LepLi2012}).
\end{rem}

From equations \eqref{formal_series_ext_0}, \eqref{formal_series_ext_j}, \eqref{vir_op_0} and \eqref{vir_op_m} we obtain
\begin{eqnarray}
L_0(v\otimes 1) &=& \left(\sum_{j\in\Zplus}J_{-j}J_jv\right)\otimes 1 \\
L_m(v\otimes 1) &=& \left(\frac{1}{2}\sum_{j\in\mathbb{Z}}J_jJ_{m-j}v\right)\otimes 1, \,\, \forall m\in\Z\backslash\left\{0\right\}
\end{eqnarray}
for all $v\in M(1)$, so we write with an abuse of notation $(L_mv)\otimes 1$ or just $L_mv$ instead of $L_m(v\otimes 1)$ for all $m\in\Z$. 

Define an anti-linear involution $\theta$ on $V_{L_{2N}}$ which acts in the following manner on basis elements
\begin{equation} \label{theta_ext}
J_{n_1}\cdots J_{n_s}\Omega\otimes e^\alpha\longmapsto (-1)^s J_{n_1}\cdots J_{n_s}\Omega\otimes e^{-\alpha}.
\end{equation}
By \cite[Theorem 4.12]{DonLin2014}, $V_{L_{2N}}$ has a structure of unitary VOA with PCT operator $\theta$. Note also that all unitary structures on a simple VOA are equivalent up to unitary isomorphism (see \cite[p. 38]{CKLW2015}).

In the following, we list some well-known unitary subalgebras of $V_{L_{2N}}$ and we describe actions of some unitary automorphisms of $V_{L_{2N}}$ and the respective fixed point subalgebras.

A unitary subalgebra of $V_{L_{2N}}$ is $M(1)\otimes_{\mathbb{C}}\mathbb{C}1$ (which we indicate just with $M(1)$) with vacuum vector $\Omega\otimes 1$, conformal vector $\nu\otimes 1$ and PCT operator $\theta|_{M(1)}$. More in general, this is a well-know unitary VOA with central charge 1, called the \emph{Heisenberg} or \emph{current VOA} (see \cite[Section 4.3]{DonLin2014}).

Let $\phi$  be the linear automorphism on $V_{L_{2N}}$ defined by the following action on the basis elements:
\begin{equation}   \label{phi_voas}
J_{n_1}\cdots J_{n_s}\Omega\otimes e^\alpha\longmapsto (-1)^s J_{n_1}\cdots J_{n_s}\Omega\otimes e^{-\alpha}
\end{equation}
(not to be confused with the anti-linear PCT operator $\theta$ in \eqref{theta_ext}). Note that $\phi$ restricts to a linear automorphism on $M(1)$. Then we define the orbifold subalgebras $V_{L_{2N}}^+$ and $M(1)^+$ as fixed points of $V_{L_{2N}}$ and $M(1)$ respectively, with respect to $\phi$ (see also \cite[Section 2]{DongGriess1998}). 

We identify every $t\in\T:=\R/2\pi\Z$ with its representative in $[0,2\pi)$ and for any positive integer $N$, we define an action on $V_{L_{2N}}$ by automorphisms (see \cite[Section 2.3]{DonNag1998})
\begin{equation}  \label{action_torus}
g_{2N,t}:=\exp\left(i\frac{t}{\sqrt{2N}}J_0\right)=
\sum_{n\geq0}\frac{\left(i\frac{t}{\sqrt{2N}}J_0\right)^n}{n!} \,
\end{equation}
which act on generators of $V_{L_{2N}}$ in the following manner
\begin{equation}   \label{action_torus_basis_elem}
v\otimes e^\alpha\longmapsto e^{i\frac{t}{\sqrt{2N}}(J|\alpha)} v\otimes e^{\alpha}.
\end{equation}
Note that $J_0$ leaves invariant the finite-dimensional eigenspaces of $L_0$ so that  $\exp\left(i\frac{t}{\sqrt{2N}}J_0\right)$ 
is a well-defined vector space endomorphism of $V_{L_{2N}}$. Note also that for $\alpha = \sqrt{2N} J \in L_{2N}$ we have 
$e^{i\frac{t}{\sqrt{2N}}(J|\alpha)} = e^{it}$ so that $g_{2N,t}$ is not the identity when $t \neq 0$, $t\in [0,2\pi)$. 
We include the following well-known facts with proofs for the reader's convenience.

\begin{prop} \label{prop:d_infty_inner_aut}
For any positive integer $N$, the automorphisms $g_{2N,t}$ and $\phi$ defined above give an
embedding of
$D_\infty = \T\rtimes \Z_2$ into
$\mathrm{Aut}_{(\cdot| \cdot)}V_{L_{2N}}$. Moreover,
\begin{eqnarray}
V_{L_{2N}}^\T &=& M(1), 
\label{fixed_point_t}\\
V_{L_{2N}}^{D_\infty} &=& M(1)^+ 
\label{fixed_point_d_infty}.
\end{eqnarray}
\end{prop}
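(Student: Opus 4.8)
The plan is to proceed in three steps: (I) show that $\phi$ and every $g_{2N,t}$ are unitary automorphisms of $V_{L_{2N}}$; (II) show that sending $t\in\T$ to $g_{2N,t}$ and the generator of $\Z_2$ to $\phi$ defines an injective continuous homomorphism $D_\infty=\T\rtimes\Z_2\to\mathrm{Aut}_{(\cdot|\cdot)}V_{L_{2N}}$, which is automatically a topological embedding since $D_\infty$ is compact and $\mathrm{Aut}_{(\cdot|\cdot)}V_{L_{2N}}$ is Hausdorff; (III) compute the two fixed-point algebras.

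For (I), I would first note that $J_0=(J_{-1}\Omega\otimes 1)_{(0)}$ is an inner derivation of $V_{L_{2N}}$, so $\exp(i\tfrac{t}{\sqrt{2N}}J_0)$ is a VOA automorphism as soon as the series is well defined; it is, because by \eqref{cr_vir_cur} $J_0$ commutes with $L_0$ and hence preserves the finite-dimensional spaces $(V_{L_{2N}})_n$, while by \eqref{formal_series_ext_0} it acts as the scalar $(J|\alpha)$ on $M(1)\otimes e^\alpha$. Thus $g_{2N,t}=\exp(i\tfrac{t}{\sqrt{2N}}J_0)\in\mathrm{Aut}V_{L_{2N}}$. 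For $\phi$ I would check that $Y$ intertwines $\phi$ on the generating vectors $J_{-1}\Omega\otimes 1$ and $\Omega\otimes e^\alpha$: from \eqref{phi_voas} and \eqref{formal_series_ext_j} one gets $\phi J_n\phi^{-1}=-J_n$ for all $n$, whence $\phi\,Y(J_{-1}\Omega\otimes 1,z)\,\phi^{-1}=Y(\phi(J_{-1}\Omega\otimes 1),z)$; and, using Remark~\ref{rem:cr_e+-} together with $\phi e_\alpha\phi^{-1}=e_{-\alpha}$, $\phi z^{\alpha_0}\phi^{-1}=z^{(-\alpha)_0}$ and $\phi E_\pm(\alpha,z)\phi^{-1}=E_\pm(-\alpha,z)$, one gets $\phi\,Y(\Omega\otimes e^\alpha,z)\,\phi^{-1}=Y(\phi(\Omega\otimes e^\alpha),z)$. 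Since $\phi$ fixes $\Omega\otimes 1$ and $\nu\otimes 1$, satisfies $\phi^2=\mathrm{id}$, and these vectors generate $V_{L_{2N}}$, it follows that $\phi\in\mathrm{Aut}V_{L_{2N}}$. Finally, one checks on the spanning set \eqref{theta_ext} that $\phi$ and every $g_{2N,t}$ commute with the PCT operator $\theta$, and a linear automorphism of a simple unitary VOA commuting with $\theta$ preserves the (unique) normalized invariant scalar product; equivalently, for $g_{2N,t}$ one may simply observe that it acts as the phase $e^{i\frac{t}{\sqrt{2N}}(J|\alpha)}$ on the summand $M(1)\otimes e^\alpha$ of the orthogonal decomposition $V_{L_{2N}}=\bigoplus_{\alpha\in L_{2N}}M(1)\otimes e^\alpha$.

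For (II), the relations to verify are $g_{2N,s}g_{2N,t}=g_{2N,s+t}$, immediate from \eqref{action_torus} (the same computation showing the assignment factors through $\T=\R/2\pi\Z$, since $(J|\alpha)\in\sqrt{2N}\,\Z$ for $\alpha\in L_{2N}$), together with $\phi^2=\mathrm{id}$ and $\phi g_{2N,t}\phi^{-1}=g_{2N,-t}$, the last a one-line computation on $v\otimes e^\alpha$ using \eqref{action_torus_basis_elem} and \eqref{phi_voas}. For injectivity: $g_{2N,t}$ multiplies $\Omega\otimes e^{\sqrt{2N}J}$ by $e^{it}$, so $g_{2N,t}=\mathrm{id}$ forces $t=0$ in $\T$; and $g_{2N,t}\phi$ sends $\Omega\otimes e^{\sqrt{2N}J}$ to $e^{-it}\,\Omega\otimes e^{-\sqrt{2N}J}\notin\C\,(\Omega\otimes e^{\sqrt{2N}J})$, so $g_{2N,t}\phi\neq\mathrm{id}$ for every $t$. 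Continuity of $t\mapsto g_{2N,t}$ for the topology of $\mathrm{Aut}V_{L_{2N}}$ is clear because $g_{2N,t}$ depends real-analytically on $t$ on each $(V_{L_{2N}})_n$, and this upgrades the injective homomorphism to an embedding.

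For (III), the decomposition $V_{L_{2N}}=\bigoplus_{\alpha\in L_{2N}}M(1)\otimes e^\alpha$ with $g_{2N,t}$ acting by $e^{i\frac{t}{\sqrt{2N}}(J|\alpha)}$ on the $\alpha$-summand yields $V_{L_{2N}}^\T=\bigoplus_{\alpha:\,(J|\alpha)=0}M(1)\otimes e^\alpha=M(1)\otimes e^0=M(1)$, since $(J|\alpha)=0$ with $\alpha\in\Z\sqrt{2N}J$ forces $\alpha=0$. Since $\phi g_{2N,t}\phi^{-1}=g_{2N,-t}$ shows $\phi$ normalises the image of $\T$ in $D_\infty$, and $\phi$ restricts to an automorphism of $M(1)$, we get $V_{L_{2N}}^{D_\infty}=(V_{L_{2N}}^\T)^{\langle\phi\rangle}=\{v\in M(1):\phi(v)=v\}=M(1)^+$ by the definition of $M(1)^+$. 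The main obstacle — indeed the only step that is not a mechanical computation on the spanning set \eqref{action_torus_basis_elem}, \eqref{phi_voas}, \eqref{theta_ext} — is the unitarity claim in step (I): it must be argued entirely within the theory of unitary VOAs, resting on the orthogonality of the charge subspaces $M(1)\otimes e^\alpha$ in the case of $g_{2N,t}$, and on the compatibility of $\phi$ with $\theta$ together with uniqueness of the normalized invariant scalar product on a simple unitary VOA in the case of $\phi$.
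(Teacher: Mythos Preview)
Your proof is correct and follows essentially the same route as the paper's: verify the semidirect-product relation $\phi g_{2N,t}\phi^{-1}=g_{2N,-t}$ on basis vectors, deduce unitarity from commutation with the PCT operator $\theta$ (the paper cites \cite[Remark~5.18]{CKLW2015} for this), and compute $V_{L_{2N}}^\T$ by decomposing an arbitrary fixed vector along the basis \eqref{eq:generators_vl2n}, with \eqref{fixed_point_d_infty} then following by definition of $M(1)^+$. The only differences are expository: the paper takes the fact that $\phi$ and $g_{2N,t}$ are VOA automorphisms as given (citing \cite{DongGriess1998} and \cite{DonNag1998}) rather than sketching it as you do, and it does not explicitly address continuity or the topological nature of the embedding, whereas you do.
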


\begin{proof}
For every fixed $N$, let $G_{2N}\cong\T$ and $H\cong\Z_2$ be the groups generated by the automorphisms $g_{2N,t}$ for all $t\in\T$ and $\phi$ respectively. Using \eqref{phi_voas} and \eqref{action_torus_basis_elem}, it is easy to verify that $\phi g_{2N,t}\phi=g_{2N,t}^{-1}$ for all $g_{2N,t}\in G_{2N}$. It follows that the group generated by $G_{2N}$ and $H$ is isomorphic to $D_\infty$.

Now we prove that the automorphisms $g_{2N,t}$ and $\phi$ are unitary. Recall from \cite[Remark 5.18]{CKLW2015} that if $V$ is a simple unitary VOA then an automorphism $g$ of $V$ is unitary if and only if $g$ commutes with the PCT operator. The above condition is easily verified on basis elements of $V_{L_{2N}}$. Thus, $D_\infty$ embeds into $\mathrm{Aut}_{(\cdot|\cdot)}V_{L_{2N}}$.

Note that \eqref{fixed_point_d_infty} follows from \eqref{fixed_point_t} by definition. To prove \eqref{fixed_point_t}, consider $a\in V_{L_{2N}}^\T$ and its linear decomposition into basis elements, that is
\begin{equation}   \label{dec_in_basis_elem}
a=\sum_{j=1}^M a_jv_j\otimes e^{\alpha_j},
\end{equation}
with $\C\ni a_j\not=0$ for all $j$, $v_j$ one of the vectors in \eqref{basis_m1}. Applying a generic automorphism $g_{2N,t}$ to both sides of \eqref{dec_in_basis_elem}, we obtain
\begin{equation}  \label{cond_linear_indip}
\sum_{j=1}^M a_jv_j\otimes e^{\alpha_j}=a=
g_{2N,t}(a)=
\sum_{j=1}^M e^{i\frac{t}{\sqrt{2N}}(J|\alpha_j)}a_jv_j\otimes e^{\alpha_j}.
\end{equation}
By linear independence of $v_j\otimes e^{\alpha_j}$, \eqref{cond_linear_indip} is satisfied if and only if $e^{i\frac{t}{\sqrt{2N}}(J|\alpha_j)}=1$ for all $j$ and $t\in\T$, i.e., $\alpha_j=0$ for all $j$. To sum up, $a\in V_{L_{2N}}^\T$ if and only if $\alpha_j=0$ for all $j$, which is equivalent to $a\in M(1)$. This completes the proof.
\end{proof}

In the following, for every fixed $N$, we identify $\T$ with the closed subgroup $G_{2N}$ of $\mathrm{Aut}_{(\cdot|\cdot)}V_{L_{2N}}$ as in the proof of Proposition \ref{prop:d_infty_inner_aut}. We also identify $\Z_k\subset\T$ with the cyclic subgroup of $\mathrm{Aut}_{(\cdot|\cdot)}V_{L_{2N}}$ of order $k$ generated by automorphisms $g_{N,\frac{2m\pi}{k}}$ for $m\in\left\{0,\dots,k-1\right\}$, and $D_k:=\Z_k\rtimes\Z_2$ with the corresponding dihedral subgroup of $\mathrm{Aut}_{(\cdot|\cdot)}V_{L_{2N}}$ of order $k$.

Note that $L_{2Nk^2}$ is a sublattice of $L_{2N}$ for all positive integers $N$ and $k$. This inclusion induces an isomorphism of vector spaces
\begin{equation}  \label{inclusion_rank_one_latt}
\iota: V_{L_{2Nk^2}}\longleftrightarrow V:=\bigoplus_{\alpha\in L_{2Nk^2}}M(1)\otimes_\C \C e^\alpha \subset V_{L_{2N}} \, .
\end{equation}
$V$ is clearly a vertex subalgebra of $V_{L_{2N}}$ thanks to Theorem \ref{theo:structure_vu12n}. Note that $\nu\in V$ and that the PCT operator $\theta$ is independent of $N$ and $k$. Thus $V$ is a unitary subalgebra of $V_{L_{2N}}$ as in Definition \ref{def:unitary_subalgebra}. It follows that \eqref{inclusion_rank_one_latt} is a unitary isomorphism of unitary VOAs. In particular, $\iota(M(1))=M(1)$. Therefore, from now onwards, we consider $V_{L_{2Nk^2}}$ as a unitary subalgebra of $V_{L_{2N}}$.

\begin{prop}   \label{prop:fixed_point_zk}
For every positive integer $N$ and $k$, we have that:
\begin{eqnarray}
V_{L_{2N}}^{\Z_k} &=& V_{L_{2Nk^2}}, 
\label{fixed_point_zk}\\
V_{L_{2N}}^{D_k} &=& V_{L_{2Nk^2}}^+ 
\label{fixed_point_dk}.
\end{eqnarray} 
\end{prop}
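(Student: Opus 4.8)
The plan is to reduce both equalities to the explicit action of the generators $g_{2N,t}$ recorded in \eqref{action_torus_basis_elem} together with the elementary description of the sublattice $L_{2Nk^2}\subset L_{2N}$. Writing a general element of $L_{2N}=\Z\sqrt{2N}J$ as $n\sqrt{2N}J$ with $n\in\Z$, one has $(J|n\sqrt{2N}J)=n\sqrt{2N}$, so by \eqref{action_torus_basis_elem} the generator $g_{2N,2\pi/k}$ of $\Z_k$ multiplies $v\otimes e^{n\sqrt{2N}J}$ by $e^{2\pi i n/k}$, and this factor equals $1$ exactly when $k\mid n$. Since $\{n\sqrt{2N}J : k\mid n\}=\Z\sqrt{2Nk^2}J=L_{2Nk^2}$, the argument from the proof of Proposition \ref{prop:d_infty_inner_aut} applies almost verbatim: decomposing $a\in V_{L_{2N}}$ as in \eqref{dec_in_basis_elem} into linearly independent basis vectors $v_j\otimes e^{\alpha_j}$ with all $a_j\neq 0$, invariance of $a$ under $g_{2N,2\pi/k}$ — equivalently under all of $\Z_k$ — forces $e^{2\pi i n_j/k}=1$ for each occurring $\alpha_j=n_j\sqrt{2N}J$, i.e.\ every $\alpha_j$ lies in $L_{2Nk^2}$. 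This says precisely that $a\in\bigoplus_{\alpha\in L_{2Nk^2}}M(1)\otimes_\C\C e^\alpha=\iota(V_{L_{2Nk^2}})$, which is \eqref{fixed_point_zk}.

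For \eqref{fixed_point_dk} I would use that, by the conventions fixed just before the statement, $D_k=\Z_k\rtimes\Z_2$ with the $\Z_2$-factor generated by $\phi$, so that $V_{L_{2N}}^{D_k}=V_{L_{2N}}^{\Z_k}\cap V_{L_{2N}}^\phi=V_{L_{2Nk^2}}\cap V_{L_{2N}}^\phi$ by the part just proved. Since $\phi$ maps $v\otimes e^\alpha$ to $\pm v\otimes e^{-\alpha}$ on basis elements via the formula \eqref{phi_voas}, and $-\alpha\in L_{2Nk^2}$ whenever $\alpha\in L_{2Nk^2}$, the automorphism $\phi$ leaves the unitary subalgebra $V_{L_{2Nk^2}}$ invariant; moreover, \eqref{phi_voas} being independent of $N$, the restriction $\phi|_{V_{L_{2Nk^2}}}$ is exactly the automorphism used to define $V_{L_{2Nk^2}}^+$. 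Hence $V_{L_{2Nk^2}}\cap V_{L_{2N}}^\phi$ is the fixed-point set of $\phi|_{V_{L_{2Nk^2}}}$, namely $V_{L_{2Nk^2}}^+$, which proves \eqref{fixed_point_dk}.

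I do not expect a genuine obstacle here; the only points needing a little care are the bookkeeping with the identification $\iota$ of \eqref{inclusion_rank_one_latt} — so that ``$a\in V_{L_{2Nk^2}}$'' is literally the statement that only exponentials $e^\alpha$ with $\alpha\in L_{2Nk^2}$ occur in the basis decomposition of $a$ — and the compatibility of $\phi$ with $\iota$, both of which are immediate from the definitions.
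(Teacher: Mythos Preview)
Your proof is correct and follows essentially the same route as the paper: the paper invokes the argument of Proposition~\ref{prop:d_infty_inner_aut} to reduce \eqref{fixed_point_zk} to the condition $e^{i\frac{2\pi}{k\sqrt{2N}}(J|\alpha_j)}=1$, i.e.\ $\alpha_j\in\Z\sqrt{2N}kJ=L_{2Nk^2}$, and then dismisses \eqref{fixed_point_dk} as following ``by definition'' from \eqref{fixed_point_zk}. Your treatment of \eqref{fixed_point_dk} simply unpacks that phrase, verifying that $\phi$ restricts to the defining involution of $V_{L_{2Nk^2}}^+$ under the identification $\iota$, which is exactly the content the paper leaves implicit.
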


\begin{proof}
First note that \eqref{fixed_point_dk} follows from \eqref{fixed_point_zk} by definition. Then it is sufficient to prove that $V_{L_{2N}}^{\Z_k}$ is equal to $V$ as in \eqref{inclusion_rank_one_latt} to conclude. Using the same argument as in the proof of Proposition \ref{prop:d_infty_inner_aut}, we have that $a\in V_{L_{2N}}^{\Z_k}$ if and only if 
$e^{i\frac{2\pi}{k\sqrt{2N}}(J|\alpha_j)}=1$ for all $j$. $\alpha_j\in\Z\sqrt{2N}J$ for all $j$, thus the former condition is satisfied if and only if $\alpha_j\in\Z\sqrt{2N}kJ$ for all $j$. Then $V=V_{L_{2N}}^{\Z_k}$ as desired. 
\end{proof}

\begin{rem}   \label{rem:action_d_k^t}
The closed subgroups of $D_\infty\subset \mathrm{Aut}_{(\cdot|\cdot)}V_{L_{2N}}$ are the circle group $\T$, cyclic groups $\Z_k$, dihedral groups $D_k$ and their conjugates $D_k^t:=g_{2N,t}D_kg_{2N,-t}$ for all $t\in\T$. Note that $D_k^t=D_k^{t+\pi}$ for all $k\in\Zplus$ and $t\in\T$ ($g_{2N,\pi}$ is in the center of  $D_\infty$). Furthermore obviously $D_k=D_k^t$ for all $k\in\Zplus$ and $t\in\Z_k$. Thus, it is easy to verify that $V_{L_{2N}}^{D_k^t}=g_{2N,t}\left(V_{L_{2N}}^{D_k}\right)$ for all $k\in\Zplus$ and $t\in\T$, which is unitarily isomorphic to $V_{L_{2N}}^{D_k}$ for all $k\in\Zplus$ and $t\in\T$.
\end{rem}

The VOAs just mentioned above have a decomposition into Virasoro modules as described in \cite[Section 2]{DongGriess1998}. In particular, if $N$ is not a perfect square (the square of a positive integer) then we have the following decompositions into irreducible Virasoro modules.
\begin{align}
V_{L_{2N}} = &
\bigoplus_{p\geq 0} L(1,p^2) \oplus \bigoplus_{m>0}2L(1,Nm^2)
\label{dec_vir_2N}\\
M(1)= &
\bigoplus_{p\geq 0} L(1,p^2)    \label{dec_vir_cur}\\
V_{L_{2N}}^+ = &
\bigoplus_{p\geq 0} L(1,4p^2) \oplus \bigoplus_{m>0}L(1,Nm^2)
\label{dec_vir_2N+} \\
M(1)^+ =&
\bigoplus_{p\geq 0} L(1,4p^2)  
\label{dec_vir+}\, .
\end{align} 

\begin{rem}\label{rem:primary_vect}
As stated in \cite[Section 2]{DongGriess1998}, the decompositions in \eqref{dec_vir_2N} and \eqref{dec_vir_cur} are due to the fact that the following isomorphisms of Virasoro modules hold
\begin{equation}
M(1)\otimes_{\mathbb{C}} \mathbb{C}e^{\pm \sqrt{2N}mJ} \cong
L(1,Nm^2), \quad \forall m\in\Zplus \, .
\end{equation}   
Thus, for all $m>0$, $\Omega\otimes e^{\pm \sqrt{2N}mJ}$ form a basis for the subspace of $V_{L_{2N}}$ of primary vectors of conformal weight $Nm^2$. As a matter of fact, it is straightforward to check that the former elements are homogeneous of conformal weight $Nm^2$ for all $m>0$ by \eqref{vir_op_0} and that $L_n(\Omega\otimes e^{\pm \sqrt{2N}mJ})=0$ for all $n>0$ and $m>0$ by \eqref{vir_op_m}. Thus, they generate (as Virasoro modules) the two copies of $L(1,Nm^2)$. This also implies that every $L(1,Nm^2)$ in \eqref{dec_vir_2N+} is generated by the $\phi$-invariant primary vector $\Omega\otimes(e^{\sqrt{2N}mJ}+e^{-\sqrt{2N}mJ})$ of conformal weight $Nm^2$. 
\end{rem}
\end{subsection}
\end{section}


\begin{section}{The classification in the VOA setting}  \label{section:compl_class}

In this section we classify all unitary subalgebra of $V_{L_{2N}}$, for all $N\in\Zplus$. To do this we state the following Galois correspondence for unitary VOAs which is crucial for the classification result. This is a variant of \cite[Theorem 7.7]{CKLW2015} which can be entirely formulated and proved in the unitary VOA setting thanks to \cite[Theorem 3]{DonMas1999}.

\begin{theo} \label{theo:galois_correspondence_voa}
Let $V$ be a simple unitary VOA and $G$ a closed subgroup of 
$\mathrm{Aut}_{(\cdot| \cdot)}V$ which is topologically isomorphic to a Lie group. Then the map $H\mapsto V^H$ gives a one-to-one
correspondence between the closed subgroups $H$ of $G$ and the unitary subalgebras $ W \subset V$ containing $V^G$.
\end{theo}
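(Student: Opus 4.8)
The plan is to deduce this Galois correspondence from the known results in \cite{CKLW2015, DonMas1999} by reducing it to the case of a \emph{finite} group and then a \emph{compact} group. First I would recall the two facts I intend to lean on: \cite[Theorem 7.7]{CKLW2015} already establishes such a correspondence when $G$ is \emph{finite}, and \cite[Theorem 3]{DonMas1999} asserts that a simple VOA $V$ is, as a module over its fixed-point subalgebra $V^G$ under a finite group of automorphisms, a direct sum of inequivalent irreducible modules indexed by the irreducible representations of $G$ (the Galois/duality theorem for VOAs). The point of the hypothesis ``$G$ topologically isomorphic to a Lie group'' is that a compact Lie group has plenty of closed normal subgroups with finite quotient is \emph{false} in general, so instead I would exploit that every closed subgroup of a compact Lie group is again a compact Lie group and argue directly at the level of the orbifold $V^G$.

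The key steps, in order: (1) The map $H \mapsto V^H$ is well defined, landing among unitary subalgebras containing $V^G$, since $H \subseteq G$ forces $V^G \subseteq V^H$, and $V^H$ is a unitary subalgebra by Example~\ref{ex:orbifold} (fixed-point subalgebras are unitary subalgebras); moreover $V^H$ is simple because $V$ is simple unitary and $(V^H)_0 = \C\Omega$ (using that $G$, hence $H$, fixes $\Omega$ and $V_0 = \C\Omega$), invoking \cite[Proposition 5.3]{CKLW2015}. (2) Injectivity: if $V^{H_1} = V^{H_2}$ with $H_1, H_2$ closed subgroups of the compact group $G$, then since $G$ acts faithfully and each $g \in G$ is determined by its action, and fixed-point subalgebras separate closed subgroups of a compact group acting on $V$ — this is the statement that $H = \{ g \in G : g|_{V^H} = \mathrm{id} \}$, equivalently $H$ equals the group of all automorphisms in $G$ fixing $V^H$ pointwise. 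I would prove $\overline{H} = H$ recovery by a Galois-type argument: the inclusion $H \subseteq \mathrm{Gal}(V/V^H) \cap G$ is clear, and the reverse uses the duality theorem applied to a finite quotient — this is where the Lie group hypothesis enters, allowing one to reduce to finite quotients $G/N$ where $N \triangleleft G$ is chosen so that $N \subseteq H$ and $H/N$ is finite, which one can always arrange when $G$ is a compact Lie group by taking $N = (H)_0$ the identity component intersected appropriately, or more carefully by approximating. (3) Surjectivity: given a unitary subalgebra $W$ with $V^G \subseteq W \subseteq V$, I would set $H := \{ g \in G : g|_W = \mathrm{id}_W \}$, a closed subgroup of $G$, and must show $V^H = W$. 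The inclusion $W \subseteq V^H$ is by construction; for the reverse, decompose $V$ as a $V^G$-module via the duality theorem for the compact group action and show that $W$, being a $V^G$-submodule stable under enough of $G$, must be exactly the sum of those isotypic components on which $H$ acts trivially.

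For steps (2) and (3) the honest route is: reduce from the compact Lie group $G$ to a finite quotient. Concretely, given $W$ with $V^G \subseteq W$, the subgroup $H$ defined above contains a closed normal subgroup $N$ of $G$ with $V^N \subseteq W \subseteq V^{?}$ and $G/N$ finite — I would argue that because $W$ is finitely generated in each conformal-weight degree (all $V_n$ finite dimensional) and $G$ is a compact Lie group, the pointwise stabilizer of $W$ is open in $G$... no: rather, the kernel of the action of $G$ on any single finite-dimensional $V_n$ need not be open, but the intersection over all $n$ is trivial. The cleaner reduction: since $G$ is a compact Lie group, for the closed subgroup $H$ one has that $H$ contains the identity component $G_0$ of $G$ if and only if ... this is getting delicate. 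I expect \textbf{this reduction to the finite case to be the main obstacle}: one needs that $V^G \subseteq W$ implies $W$ is determined by a finite amount of Galois data, which follows if $G$ acts on $V$ ``through a finite quotient after passing to $V^G$-module structure,'' i.e. from the observation that $V$ is a \emph{finitely generated} $V^G$-module under suitable rationality hypotheses — but those are not assumed here. The correct fix, which I would carry out, is to note that since $W \supseteq V^G$ and the $G$-module $V$ decomposes with each isotypic component a simple $V^G$-module appearing once (duality theorem), any intermediate unitary subalgebra $W$ is automatically a \emph{sum of isotypic components}, hence corresponds to a set $S$ of irreducible $G$-representations closed under the operations induced by the vertex product; one shows $S = \widehat{G/H}$ for $H = \bigcap_{\pi \in S} \ker \pi$, and that this $H$ is closed with $V^H = W$. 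This is exactly the compact-group analogue of \cite[Theorem 7.7]{CKLW2015}, and the Lie group hypothesis guarantees $H$ so constructed is again a Lie group so the framework of \cite{CKLW2015} applies verbatim.
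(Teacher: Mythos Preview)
Your proposal has a genuine gap rooted in a misreading of \cite[Theorem~3]{DonMas1999}. You state that this result is the duality theorem for a \emph{finite} group of automorphisms, and then spend most of the proposal attempting a reduction from the compact Lie group $G$ to finite quotients. But \cite{DonMas1999} is precisely the paper that extends quantum Galois theory to \emph{compact Lie groups}: its Theorem~3 already asserts, for a simple VOA $V$ with a compact Lie group $G$ acting faithfully by automorphisms, that the map $H \mapsto V^H$ is a bijection between closed subgroups of $G$ and \emph{orthogonally complemented} subVOAs of $V$ containing $V^G$. No reduction to the finite case is required.

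Consequently, the entire middle section of your argument --- approximating $H$ by finite quotients, passing to identity components, arguing that stabilizers are open --- is both unnecessary and, as you yourself observe, genuinely problematic (a compact Lie group need not have a cofinal system of finite quotients, and the stabilizer of $W$ in $G$ has no reason to be open). The correct proof is much shorter: one first notes that $G$, being a closed subgroup of the compact group $\mathrm{Aut}_{(\cdot|\cdot)}V$ (cf.\ \cite[Lemma~5.20]{CKLW2015}), is automatically a compact Lie group. Then one checks that every unitary subalgebra $W$ containing $V^G$ is orthogonally complemented in the sense of \cite[Definition~2]{DonMas1999}; this is where the unitary structure enters, since the relevant complement is exactly the orthogonal complement $W^\perp$ with respect to the invariant scalar product, and invariance forces $W^\perp$ to be a $W$-submodule of $V$. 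With that hypothesis verified, \cite[Theorem~3]{DonMas1999} applies directly and gives the bijection. Your step~(1) is fine, but steps~(2) and~(3) should be replaced by this single verification.
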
 

\begin{proof}
We first show that every unitary subalgebra $W \subset V$ which contains $V^G$ is an orthogonally complemented subVOA in the sense of \cite[Definition 2]{DonMas1999}. To this end it is rather straightforward to see that  in our case the subspace in Eq. (1.3) in  \cite[Definition 2]{DonMas1999} coincides with the orthogonal complement 
$$W^\perp :=  \{a \in V
\;|\; (a|b) = 0\; \textrm{for all}\; b \in W  \}$$
which is easily seen to be  a $W$-submodule of $V$ because of the invariance property of the scalar product and the fact that $W$ is a unitary subalgebra. Now, for every closed subgroup $H$ of $G$, $V^H$ is a unitary subalgebra of $V$ by Example \ref{ex:orbifold}. Moreover, since
$\mathrm{Aut}_{(\cdot| \cdot)} V$ is compact by \cite[Lemma 5.20]{CKLW2015}, $G$ must be topologically isomorphic to a compact Lie group. Then the result follows from \cite[Theorem 3]{DonMas1999}.  
\end{proof}

The unitary subalgebras of $V_{L_2}$ have already been classified in \cite[Theorem 8.13]{CKLW2015}. Here, we give a proof in the unitary VOA setting using the Galois correspondence stated in Theorem \ref{theo:galois_correspondence_voa} instead of \cite[Theorem 7.7]{CKLW2015}.

First of all, recall that $V_{L_2}$ is identified (cf. \cite[Example 8.8]{CKLW2015}) with the simple unitary VOA $V_{\mathfrak{sl}(2,\mathbb{C})_1}$ built from the affine Lie algebras $\mathfrak{sl}(2,\mathbb{C})_1$ associated to the Lie group $\SU$ (see \cite[Chapter 7]{Kac1995}, \cite{GooWal1984} and \cite{PreSeg1986}). Consider the ortonormal generators $J^a$ for $a\in\left\{x,y,z\right\}$ of the complex Lie algebra $\mathfrak{sl}(2,\mathbb{C})$ and corresponding elements $J_0^a$ of the associated loop algebra. Then we have a group of unitary isomorphisms of $V_{\mathfrak{sl}(2,\mathbb{C})_1}$, isomorphic to $\SO$ and generated by operators of the form $\exp\left(itJ_0^a\right)$ for all $t\in\T$ and all $a\in\left\{x,y,z\right\}$. Then, $\SO\subseteq\operatorname{Aut}_{(\cdot|\cdot)}V_{L_2}$ (see \cite[p. 63]{CKLW2015}). Remember that the closed subgroups of $\SO$ are, up to isomorphism, the circle group $\mathbb{T}$, all cyclic groups $\mathbb{Z}_k$, the infinite dihedral group $D_\infty$, all finite dihedral groups $D_k$ and the three platonic groups $E_m$. Finally, note that the action of $D_\infty$ introduced in Proposition \ref{prop:d_infty_inner_aut} can be embedded in $\SO$ as described above, identifying the current vector $J$ with $\sqrt{2}J^z$, cf. \cite[Section 5B]{BMT1988}. Therefore, we have the following.

\begin{theo}    \label{theo:subvoas_vl2}
All unitary subalgebras of $V_{L_2}\cong V_{\mathfrak{sl}(2,\mathbb{C})_1}$ are given by the family of fixed point subalgebras $V_{L_2}^H$ for every closed subgroup $H\subseteq \SO$ together with the trivial subalgebra $\C\Omega\otimes 1$. In particular, $V_{L_2}^{\SO} = L(1,0)$ and consequently $\operatorname{Aut}_{(\cdot|\cdot)}V_{L_2}=\SO$.
\end{theo}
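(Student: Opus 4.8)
The plan is to deduce the statement from the Galois correspondence of Theorem~\ref{theo:galois_correspondence_voa}, applied with $V=V_{L_2}$ and $G=\SO$. For this I need two ingredients: (a) the identification $V_{L_2}^{\SO}=L(1,0)$, and (b) the fact that every \emph{nontrivial} unitary subalgebra of $V_{L_2}$ already contains $L(1,0)$. The remaining assertion $\operatorname{Aut}_{(\cdot|\cdot)}V_{L_2}=\SO$ will then follow from a short argument about the action on the weight-one space.

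For (a): the inclusion $L(1,0)=W(\nu)\subseteq V_{L_2}^{\SO}$ is immediate, since automorphisms fix $\nu$. For the reverse inclusion I would invoke the well-known decomposition of $V_{L_2}\cong V_{\mathfrak{sl}(2,\mathbb{C})_1}$ as a module for $\SO\times\Vir$ (cf.\ \cite[Section~2]{DongGriess1998}), namely $V_{L_2}\cong\bigoplus_{n\geq 0} M_n\otimes L(1,n^2)$, where $M_n$ is the irreducible $(2n+1)$-dimensional (spin-$n$) representation of $\SO$; it factors through $\SO$ rather than $\SU$ because only integral spins occur in $V_{L_2}$. Since $\SO$ commutes with the Virasoro action and $M_n$ has no nonzero $\SO$-fixed vector for $n\geq1$, taking $\SO$-invariants leaves only the $n=0$ summand, i.e.\ $V_{L_2}^{\SO}=L(1,0)$. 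The same decomposition shows that the space of quasi-primary vectors of conformal weight $2$ in $V_{L_2}$ is spanned by $\nu$: in each $L(1,n^2)$ with $n\geq1$ it vanishes (for $n=1$ the weight-two vectors are $L_{-1}$ of the primary one, hence not quasi-primary; for $n\geq2$ there are no weight-two vectors at all), so only the single summand $L(1,0)$ contributes, and there the weight-two quasi-primary space is $\mathbb{C}\nu$.

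For (b): given a unitary subalgebra $W\subseteq V_{L_2}$, I would set $\omega:=e_W(\nu)\in W$. By Proposition~\ref{prop:cklw_unitary_subvoa} and Example~\ref{ex:family_theta_inv}, $\omega$ is a $\theta$-invariant Virasoro vector lying in $(V_{L_2})_2$; it is moreover quasi-primary, since $L_1\omega\in W$ by Definition~\ref{def:unitary_subalgebra} while $(L_1\omega\,|\,w)=(\omega\,|\,L_{-1}w)=(\nu\,|\,L_{-1}w)=(L_1\nu\,|\,w)=0$ for all $w\in W$ (using $L_{-1}w\in W$, the invariance of the scalar product, and $L_1\nu=0$), so $L_1\omega\in W\cap W^\perp=\{0\}$. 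By the computation in (a) it follows that $\omega=\lambda\nu$ for some $\lambda\in\mathbb{C}$, and since $\omega_{(n+1)}=\lambda L_n$ satisfies the Virasoro relations only when $\lambda\in\{0,1\}$, we get $\lambda\in\{0,1\}$. If $\lambda=0$, then $L_0|_W=L_0^\omega|_W=0$ by Proposition~\ref{prop:cklw_unitary_subvoa}(ii), so $W\subseteq(V_{L_2})_0=\mathbb{C}\Omega$ and $W$ is trivial. If $\lambda=1$, then $\nu=\omega\in W$, hence $L(1,0)=W(\nu)\subseteq W$.

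Combining (a) and (b): by Theorem~\ref{theo:galois_correspondence_voa} with $G=\SO$ (a compact Lie subgroup of $\operatorname{Aut}_{(\cdot|\cdot)}V_{L_2}$), the map $H\mapsto V_{L_2}^H$ is a bijection between the closed subgroups $H\subseteq\SO$ and the unitary subalgebras of $V_{L_2}$ containing $V_{L_2}^{\SO}=L(1,0)$; by (b) the only other unitary subalgebra is the trivial one $\mathbb{C}\Omega\otimes 1$. This proves the classification and, in particular, $V_{L_2}^{\SO}=L(1,0)$. Finally, since $V_{L_2}\cong V_{\mathfrak{sl}(2,\mathbb{C})_1}$ is generated as a vertex algebra by its weight-one space $(V_{L_2})_1$, any $g\in\operatorname{Aut}_{(\cdot|\cdot)}V_{L_2}$ is determined by $g|_{(V_{L_2})_1}$; this restriction preserves the bracket $a_{(0)}b$ and the scalar product and commutes with $\theta$, hence restricts to a Lie algebra automorphism of the compact real form $\mathfrak{su}(2)=\{x\in(V_{L_2})_1:\theta(x)=x\}$ and therefore lies in $\operatorname{Aut}(\mathfrak{su}(2))\cong\SO$; as $\SO\subseteq\operatorname{Aut}_{(\cdot|\cdot)}V_{L_2}$ this gives $\operatorname{Aut}_{(\cdot|\cdot)}V_{L_2}=\SO$. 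I expect ingredient (a) to be the only step that is not soft, since it genuinely relies on the spin-$n$ multiplicity decomposition of $V_{\mathfrak{sl}(2,\mathbb{C})_1}$ (equivalently, on the module structure already used in Section~\ref{section:prelim}); everything else is formal manipulation with the results recalled there.
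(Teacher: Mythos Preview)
Your proof is correct and follows the same strategy as the paper: establish $V_{L_2}^{\SO}=L(1,0)$, show every nontrivial unitary subalgebra contains $L(1,0)$, and then invoke the Galois correspondence of Theorem~\ref{theo:galois_correspondence_voa}. Where the paper simply cites \cite[Lemma~8.10, Proposition~8.11]{CKLW2015} and \cite[Corollary~2.4]{DongGriess1998} for these two ingredients, you supply the underlying arguments directly (your argument for~(b) is exactly the one the paper itself gives later for general $N\neq2$ in Proposition~\ref{prop:every_sub_contains_l10}, and your quasi-primarity of $\omega$ is the content of~\eqref{omega_conformal}); your closing argument for $\operatorname{Aut}_{(\cdot|\cdot)}V_{L_2}=\SO$ via the faithful action on $(V_{L_2})_1\cong\mathfrak{sl}(2,\C)$ is a clean way to justify the ``consequently'' that the paper leaves implicit.
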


\begin{proof}
Let $W$ be a nontrivial subalgebra. Then by \cite[Lemma 8.10]{CKLW2015}, $L(1,0)\subset W$. Moreover, by \cite[Proposition 8.11]{CKLW2015} (cf. also \cite[Corollary 2.4]{DongGriess1998})
$L(1,0) = V_{L_2}^{SO(3)}$ and the claim follows from Theorem \ref{theo:galois_correspondence_voa}. 
\end{proof}

For references about VOAs built from affine Lie algebras, cf.~\cite[Example 4.9a and Section 5.7]{Kac1997} and \cite[Section 6.2]{LepLi2012}, \cite[Section 7.2]{FLM1989}, and \cite[Section 4.2]{DonLin2014} for the unitary structure.

For $N$ different from a perfect square, the classification of unitary subalgebras of $V_{L_{2N}}$ will rely on Theorem \ref{theo:m1+_in_subvoa}, the key ingredient of our classification result, which states that any unitary subalgebra properly containing $L(1,0)$ also contains $M(1)^+$. To prove it, we begin with some preliminary results.

\begin{lem} \label{lem:primary_vector_4}
Let $N\in\Zplus$.
\begin{itemize}
\item[(i)] The vector 
$$
u\otimes 1:=\left(\frac{1}{2}(J_{-1})^4\Omega -J_{-3}J_{-1}\Omega +\frac{3}{4}(J_{-2})^2\Omega\right)\otimes 1
$$
is primary of conformal weight 4. In particular, if $N$ is not a perfect square, then it is the highest weight vector of the unique irreducible Virasoro submodule $L(1,4)$ of $V_{L_{2N}}$ as in \eqref{dec_vir_2N}.
\item[(ii)] We also have
\begin{align*}
L_{-2}(\nu\otimes 1) =& \left(\frac{1}{4}(J_{-1})^4\Omega +J_{-3}J_{-1}\Omega \right)\otimes 1  \\
L_{-4}(\Omega \otimes 1) =& \left(\frac{1}{2}(J_{-2})^2\Omega +J_{-3}J_{-1}\Omega \right)\otimes 1 \, .
\end{align*}
\end{itemize}
\end{lem}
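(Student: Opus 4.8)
The plan is to verify both parts by direct computation in the Heisenberg VOA $M(1)$, using only the commutation relations \eqref{cr_heisenberg_algebra}, the action of the Virasoro operators given in \eqref{vir_op_0}--\eqref{vir_op_m}, and the fact \eqref{Omega_is_0} that positive modes annihilate $\Omega$. Throughout I would work with $\alpha = J$ (so $(J|J)=1$) and write simply $v$ for $v\otimes 1$, as the text permits.

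For part (i), first I would check homogeneity: each of the three monomials $(J_{-1})^4\Omega$, $J_{-3}J_{-1}\Omega$, $(J_{-2})^2\Omega$ lies in $V_4$ by \eqref{vir_op_0}, so $u$ has conformal weight $4$. The substantive point is that $u$ is primary, i.e. $L_n u = 0$ for all $n\geq 1$. Since $u\in V_4$ and $L_n$ lowers conformal weight by $n$, only $L_1, L_2, L_3, L_4$ can be nonzero, and by the Virasoro relations \eqref{cr_vir} it suffices to check $L_1 u = 0$ and $L_2 u = 0$. Using \eqref{vir_op_m}, $L_1 = \frac12\sum_{j\in\Z}J_j J_{1-j}$ and $L_2 = \frac12\sum_{j\in\Z}J_j J_{2-j}$; applied to a monomial in negative modes acting on $\Omega$, only finitely many terms survive, and the computation reduces to repeated use of $[J_m,J_n]=m\delta_{m,-n}$ and $J_n\Omega = 0$ for $n\geq 0$. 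I expect $L_1 u$ and $L_2 u$ each to be a linear combination of the weight-$3$ and weight-$2$ monomials $J_{-3}\Omega$, $J_{-2}J_{-1}\Omega$, $(J_{-1})^3\Omega$, resp. $J_{-2}\Omega$, $(J_{-1})^2\Omega$, with coefficients that vanish precisely because of the chosen coefficients $\tfrac12, -1, \tfrac34$. For the ``in particular'' clause: when $N$ is not a perfect square, \eqref{dec_vir_2N} shows that the weight-$4$ primary vectors in $V_{L_{2N}}$ coming from the $L(1,p^2)$ summands occur only for $p=2$ (giving a one-dimensional space, as $L(1,0)$ contributes $\nu$-descendants but no new primary at weight $4$, and the $2L(1,Nm^2)$ summands have lowest weight $Nm^2 \neq 4$ since $N$ is not a square); hence $u$, being primary of weight $4$ and nonzero, must be the highest weight vector of that unique $L(1,4)$.

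For part (ii) I would compute $L_{-2}(\nu\otimes 1)$ and $L_{-4}(\Omega\otimes 1)$ directly. Recall $\nu\otimes 1 = \tfrac12 (J_{-1})^2\Omega$ and $\Omega\otimes 1 = \Omega$. Using \eqref{vir_op_m}, $L_{-2} = \tfrac12\sum_{j\in\Z}J_j J_{-2-j}$ and $L_{-4} = \tfrac12\sum_{j\in\Z}J_j J_{-4-j}$; acting on $\Omega$ (resp. $(J_{-1})^2\Omega$) only a handful of terms survive after commuting positive modes to the right and invoking \eqref{Omega_is_0}. This is a short bookkeeping exercise producing exactly $\big(\tfrac14(J_{-1})^4\Omega + J_{-3}J_{-1}\Omega\big)\otimes 1$ and $\big(\tfrac12(J_{-2})^2\Omega + J_{-3}J_{-1}\Omega\big)\otimes 1$ respectively.

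The main obstacle is purely the care required in the finite but slightly fiddly mode-algebra computations for $L_1 u$ and $L_2 u$ in part (i): one must correctly count the combinatorial multiplicities arising when $\sum_j J_j J_{n-j}$ hits a product of four (or two) negative modes, keeping track of which pairs contract and which commute past each other. There is no conceptual difficulty — it is a finite calculation in the Heisenberg algebra — but it is the step where a sign or factor error would most easily creep in, so I would organize it by first recording the elementary identities $L_1 (J_{-1})^2\Omega$, $L_1 J_{-2}\Omega$, etc., and then assembling $L_1 u$, $L_2 u$ by linearity. Part (ii), and the identification of $u$ with the unique $L(1,4)$, then follow immediately from \eqref{dec_vir_2N}.
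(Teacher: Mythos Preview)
Your proposal is correct and follows essentially the same strategy as the paper: direct computation in the Heisenberg algebra using \eqref{cr_heisenberg_algebra}, \eqref{vir_op_0}--\eqref{vir_op_m} and \eqref{Omega_is_0}. Two minor differences are worth noting. First, the paper computes $L_m u$ for each $m\in\{1,2,3,4\}$ separately, whereas you observe that the Virasoro relations \eqref{cr_vir} reduce the check to $L_1u=L_2u=0$ (since $L_3=[L_2,L_1]$ and $L_4=-\tfrac12[L_1,L_3]$); this is a legitimate shortcut that halves the work. Second, the paper organizes the calculation via the commutator $[L_m,J_n]=-nJ_{m+n}$ from \eqref{cr_vir_cur}, moving $L_m$ past each $J_{-k}$ factor, while you propose to expand $L_m$ directly as the Sugawara sum $\tfrac12\sum_j J_jJ_{m-j}$ and commute the resulting $J$-modes to the right. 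Both methods are equivalent and of comparable length; the commutator approach in the paper is arguably tidier for part (i) since each application of $[L_m,J_{-k}]$ produces a single term, whereas the Sugawara expansion requires tracking several surviving $j$-values. For part (ii) your description is exactly what the paper does.
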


\begin{proof}
(i) From formula \eqref{vir_op_0} it is clear that $u\otimes1$ is homogeneous of conformal weight 4. According to \eqref{dec_vir_2N}, if $u\otimes 1$ is primary and $N$ is not a perfect square, it will be the highest weight vector of the unique irreducible Virasoro submodules $L(1,4)$ of $V_{L_{2N}}$. We need to prove that $L_mu=0$ for all $m\geq1$. Using equation \eqref{cr_vir_cur}, we see that
\begin{align}   
\frac{1}{2}L_m(J_{-1})^4\Omega =& \frac{1}{2}\Big( J_{m-1}(J_{-1})^3\Omega +J_{-1}J_{m-1}(J_{-1})^2\Omega \nonumber\\
&+(J_{-1})^2J_{m-1}J_{-1}\Omega + 
(J_{-1})^3J_{m-1}\Omega +(J_{-1})^4L_m\Omega \Big) \label{j4}\\
-L_mJ_{-3}J_{-1}\Omega =&
-\left(3J_{m-3}J_{-1}\Omega +J_{-3}J_{m-1}\Omega +J_{-3}J_{-1}L_m\Omega \right)\label{j31}\\
\frac{3}{4}L_m(J_{-2})^2\Omega=& \frac{3}{4}\left(2J_{m-2}J_{-2}\Omega + 2J_{-2}J_{m-2}\Omega + (J_{-2})^2L_m\Omega\right) .\label{j2}
\end{align}
Note that the last term on the right-hand side of each of equations \eqref{j4} - \eqref{j2} is 0, because $L_m\Omega=0$ for all $m\geq1$. Moreover, by the commutation relations \eqref{cr_heisenberg_algebra} we know $J_j$ and $J_k$ commute if $k\not = -j$. This means that for all $m\geq5$ every term in equations \eqref{j4} - \eqref{j2} is zero since $J_{j}\Omega=0$ for all $j\geq 0$. Therefore, it remains to study the four cases $m\in\{1,2,3,4\}$ one by one. For $m=1$ we have
\begin{align*}
\frac{1}{2}L_1(J_{-1})^4\Omega =& 2(J_{-1})^3J_0\Omega =0 \\
-L_1J_{-3}J_{-1}\Omega =& -3J_{-2}J_{-1}\Omega -J_{-3}J_0\Omega
=-3J_{-2}J_{-1}\Omega \\
\frac{3}{4}L_1(J_{-2})^2\Omega =& \frac{3}{4}\left(2J_{-1}J_{-2}\Omega + 2J_{-2}J_{-1}\Omega\right)=3J_{-2}J_{-1}\Omega
\end{align*}
and the sum is clearly 0.
For $m=2$,
\begin{align*}
\frac{1}{2}L_2(J_{-1})^4\Omega=
&\frac{1}{2}\left(\right. J_1(J_{-1})^3\Omega +J_{-1}J_1(J_{-1})^2\Omega +\\ 
&+(J_{-1})^2J_1J_{-1}\Omega + (J_{-1})^3J_1\Omega\left.\right) \\
=&\frac{1}{2}\left((3+2+1)(J_{-1})^2\Omega+4(J_{-1})^3J_1\Omega\right) \\
=&3(J_{-1})^2\Omega\\
-L_2J_{-3}J_{-1}\Omega =& -3J_{-1}J_{-1}\Omega -J_{-3}J_1\Omega
=-3(J_{-1})^2\Omega \\
\frac{3}{4}L_2(J_{-2})^2\Omega =&
\frac{3}{4}\left(2J_0J_{-2}\Omega + 2J_{-2}J_0\Omega\right)= 0
\end{align*}
and again the sum is 0. For $m=3$,
\begin{align*}
\frac{1}{2}L_3(J_{-1})^4\Omega =& 2(J_{-1})^3J_2\Omega =0 \\
-L_3J_{-3}J_{-1}\Omega =& -3J_0J_{-1}\Omega -J_{-3}J_2\Omega=0 \\
\frac{3}{4}L_3(J_{-2})^2\Omega =&
\frac{3}{4}\left(2J_1J_{-2}\Omega + 2J_{-2}J_1\Omega\right)=0 \, .
\end{align*}
Finally for $m=4$,
\begin{align*}
\frac{1}{2}L_4(J_{-1})^4\Omega =& 2(J_{-1})^3J_3\Omega =0 \\
-L_4J_{-3}J_{-1}\Omega =& -3J_1J_{-1}\Omega -J_{-3}J_3\Omega
=-3\Omega\\
\frac{3}{4}L_4(J_{-2})^2\Omega =&
\frac{3}{4}\left(2J_2J_{-2}\Omega + 2J_{-2}J_2\Omega \right)
= 3\Omega
\end{align*}
which adds up to 0 as well.

(ii) From (ii) of Theorem \ref{theo:structure_vu12n} we have that
\begin{equation*}
\begin{split}
L_{-2}\nu=\frac{1}{2}L_{-2}(J_{-1})^2\Omega &=
\frac{1}{2}\left(2J_{-3}J_{-1}\Omega +(J_{-1})^2L_{-2}\Omega\right) \\
&= J_{-3}J_{-1}\Omega +\frac{1}{4}(J_{-1})^4\Omega
\end{split}
\end{equation*}
and using \eqref{vir_op_m}
\begin{equation*}
\begin{split}
L_{-4}\Omega 
=\frac{1}{2}\sum_{j\in\mathbb{Z}} J_jJ_{-4-j}\Omega
=\frac{1}{2}\sum_{j=-3}^{-1} J_jJ_{-4-j}\Omega &=
\frac{1}{2}\left(2J_{-3}J_{-1}\Omega+(J_{-2})^2\Omega\right)\\
&=J_{-3}J_{-1}\Omega+\frac{1}{2}(J_{-2})^2\Omega \, .
\end{split}
\end{equation*}
\end{proof}

\begin{prop}   \label{prop:cur_primary4}
Let $N,m$ be positive integers such that $g:=\sqrt{2N}m\geq2$. Let $b\in\mathbb{C}\setminus\{0\}$ and set
$$
e^g_b:= \Omega\otimes\left(e^{gJ}+be^{-gJ}\right) \, .
$$
Then 
\begin{align*}
\left(\Omega\otimes e^{\pm gJ}\right)_{\left(g^2-2\right)}
\left(\Omega\otimes e^{\mp gJ}\right) =& 
(\pm gJ_{-1}\Omega)\otimes 1 \\
(e^g_b)_{\left(g^2-5\right)}e^g_b =& bv_g\otimes 1,
\end{align*}
where
$$
v_g:=\frac{g^4}{12}(J_{-1})^4\Omega 
+\frac{2g^2}{3}J_{-3}J_{-1}\Omega
+\frac{g^2}{4}(J_{-2})^2\Omega 
\, .
$$
\end{prop}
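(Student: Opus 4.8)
The plan is to compute both sides directly from the explicit formula \eqref{field2} for $Y(\Omega\otimes e^\alpha,z)$, specialised to vectors of the form $\Omega\otimes e^\beta$. The key simplification is this: for $\alpha,\beta\in L_{2N}$, since $J_j\Omega=0$ for $j\ge0$ in $M(1)$, the annihilation factor $E_-(\alpha,z)$ in \eqref{field2} fixes $\Omega\otimes e^\beta$ (via \eqref{formal_series_ext_j}); the factor $z^{\alpha_0}$ contributes the overall scalar $z^{(\alpha|\beta)}$ by \eqref{action_z_alpha0}; the three operators $e_\alpha$, $z^{\alpha_0}$ and $E_+(\alpha,z)$ commute by Remark \ref{rem:cr_e+-}; and $e_\alpha$ sends $M(1)\otimes\mathbb{C}e^\beta$ to $M(1)\otimes\mathbb{C}e^{\alpha+\beta}$ by \eqref{eq:group_alg_mult}. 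Rewriting $E_+(\alpha,z)$ by \eqref{e+_expansion}--\eqref{e+_moltiplication} as $\exp\big(\textstyle\sum_{j>0}\tfrac{\alpha_{-j}}{j}z^j\big)$, these observations give
\[
Y(\Omega\otimes e^\alpha,z)(\Omega\otimes e^\beta)=z^{(\alpha|\beta)}\,\Big(\exp\Big(\textstyle\sum_{j>0}\tfrac{\alpha_{-j}}{j}z^j\Big)\Omega\Big)\otimes e^{\alpha+\beta},
\]
reducing everything to extracting coefficients of a power series with coefficients in $M(1)$.

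For the first identity I would take $\alpha=\pm gJ$ and $\beta=\mp gJ$, so that $(\alpha|\beta)=-g^2$, $\alpha+\beta=0$ and $\alpha_{-j}=\pm gJ_{-j}$. Since $\Omega\otimes e^{\pm gJ}$ has conformal weight $g^2/2$, the $(g^2-2)$-product is the coefficient of $z^{1-g^2}$ on the right-hand side above, that is, the coefficient of $z^1$ in $\exp\big(\pm g\textstyle\sum_{j>0}\tfrac{J_{-j}}{j}z^j\big)\Omega$, which is $\pm gJ_{-1}\Omega$; tensoring with $1$ yields $(\Omega\otimes e^{\pm gJ})_{(g^2-2)}(\Omega\otimes e^{\mp gJ})=(\pm gJ_{-1}\Omega)\otimes1$.

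For the second identity I would expand $Y(e^g_b,z)e^g_b$ by bilinearity into four terms. In the two ``diagonal'' terms one has $\alpha=\beta=\pm gJ$, hence $(\alpha|\beta)=g^2$, and the corresponding series is $z^{g^2}$ times a power series in nonnegative powers of $z$; since $g\ge2$ we have $4-g^2\le 0<g^2$, so their $(g^2-5)$-products, i.e.\ the coefficients of $z^{4-g^2}$, vanish. In the two ``cross'' terms $\alpha=-\beta=\pm gJ$, and by the displayed formula the $(g^2-5)$-product equals the coefficient of $z^4$ in $\exp\big(\pm g\textstyle\sum_{j>0}\tfrac{J_{-j}}{j}z^j\big)\Omega$, tensored with $1$. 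Setting $B(z):=\sum_{j>0}\tfrac{J_{-j}}{j}z^j$, it then remains to compute the coefficient of $z^4$ in $\big(e^{gB(z)}+e^{-gB(z)}\big)\Omega=2\big(\Omega+\tfrac{g^2}{2}B(z)^2\Omega+\tfrac{g^4}{24}B(z)^4\Omega+\cdots\big)$. Only $B^2$ and $B^4$ contribute in degree $4$, and since the $J_{-j}$ mutually commute the coefficient of $z^4$ in $B(z)^2\Omega$ is $\big(\tfrac{2}{3}J_{-3}J_{-1}+\tfrac{1}{4}(J_{-2})^2\big)\Omega$ and in $B(z)^4\Omega$ it is $(J_{-1})^4\Omega$; collecting terms produces exactly $v_g$, so $(e^g_b)_{(g^2-5)}e^g_b=bv_g\otimes1$. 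This is consistent with $(e^g_b)_{(g^2-5)}e^g_b$ having conformal weight $g^2-(g^2-5)-1=4$, matching that of $v_g\otimes1$.

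The one mildly delicate step is this final degree-$4$ extraction, where it is useful to note that $J_{-4}\Omega$ can arise only from $B(z)^1$ and $(J_{-1})^2J_{-2}\Omega$ only from $B(z)^3$, both odd powers of $B$, which do not appear in the even series $e^{gB}+e^{-gB}=2\cosh(gB)$; this explains why precisely the three terms $(J_{-1})^4\Omega$, $J_{-3}J_{-1}\Omega$, $(J_{-2})^2\Omega$ survive in $v_g$. Keeping track of the rational coefficients produced by the $\cosh$-expansion then finishes the computation, everything else being a routine substitution into \eqref{field2} together with the commutation facts of Remark \ref{rem:cr_e+-}.
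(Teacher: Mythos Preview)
Your proof is correct and follows essentially the same approach as the paper: reduce $Y(\Omega\otimes e^\alpha,z)(\Omega\otimes e^\beta)$ to $z^{(\alpha|\beta)}E_+(\alpha,z)\Omega\otimes e^{\alpha+\beta}$ using that $E_-$ fixes $\Omega$, discard the diagonal terms by a degree count, and extract the $z^1$ and $z^4$ coefficients of the exponential. Your packaging via $2\cosh(gB)$ is a small but pleasant streamlining of the paper's computation, which instead writes out the $n=1,2,3,4$ contributions to $E_+(\pm gJ,z)\Omega$ separately and cancels the odd terms after summing the $\pm$ versions.
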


\begin{proof}
Note that the $z^{-g^2+4}$-coefficient of the formal $z$-series $Y(e^g_b, z)e^g_b$  corresponds to
$(e^g_b)_{\left(g^2-5\right)}e^g_b$. We have that
\begin{align*}
Y(e^g_b, z)e^g_b =& A+bB \\
A :=& Y(\Omega\otimes e^{gJ},z)(\Omega\otimes e^{gJ})+b^2Y(\Omega\otimes e^{-gJ},z)(\Omega\otimes e^{-gJ})\\
B :=& Y(\Omega\otimes e^{gJ},z)(\Omega\otimes e^{-gJ})+Y(\Omega\otimes e^{-gJ},z)(\Omega\otimes e^{gJ}).
\end{align*}
From formulas \eqref{e-_expansion} - \eqref{e-_multiplication} we deduce that $E_-(cgJ,z)(\Omega\otimes e^{dgJ})$ is equal to
\begin{equation}  \label{calculation_e-}
\left(\Omega+\sum_{n=1}^{+\infty} \frac{(-1)^n}{n!}\left( \sum_{m=n}^{\infty}
\left(\sum_{\substack{j_1+\cdots +j_n=m \\ j_k>0}} \frac{(cg)^nJ_{j_1}\cdots J_{j_n}\Omega}{j_1\cdots j_n}\right) z^{-m} \right)\right)
\otimes e^{dgJ},
\end{equation}
for all $c, d\in\{-1,+1\}$. Formula \eqref{calculation_e-} is equal to $\Omega\otimes e^{dgJ} $ because $J_j\Omega=0$ for all $j\geq0$ by construction.
From Remark \ref{rem:cr_e+-} and \eqref{field2}, we have that 
\begin{eqnarray}
Y(\Omega\otimes e^{\pm gJ},z)(\Omega\otimes e^{\pm gJ})&=&
z^{g^2}\left(E_+(\pm gJ,z)\Omega\right)\otimes e^{\pm 2gJ}
\label{prod_dir1}\\
Y(\Omega\otimes e^{\pm gJ},z)(\Omega\otimes e^{\mp gJ})&=&
z^{-g^2}\left(E_+(\pm gJ,z)\Omega\right)\otimes 1
\label{prod_dir1.1}
\end{eqnarray}
From \eqref{e+_moltiplication} we deduce that the lowest $z$-power in equation \eqref{prod_dir1} is $g^2>0$. Thus we can restrict our attention just to $B$ and therefore to equation \eqref{prod_dir1.1} because $-g^2+4\leq0$. Then we have to consider the $z^4$-coefficient of $E_+(\pm gJ,z)\Omega$ in equation \eqref{prod_dir1.1}. This means that in \eqref{e+_expansion} - \eqref{e+_moltiplication}, we can restrict the calculation to the cases $m=4$ and $n\in\{1,2,3,4\}$. There we find
\begin{equation*}
\begin{array}{r@{:\quad}l}
n=1&   \frac{\pm g}{4}J_{-4} 
 \\
 n=2&   \frac{1}{2}\left(
\frac{(\pm g)^2}{4}(J_{-2})^2+\frac{(\pm g)^2}{3}J_{-3}J_{-1}
+\frac{(\pm g)^2}{3}J_{-1}J_{-3}
\right)
\\
n=3&  \frac{1}{3!}\left(
 \frac{(\pm g)^3}{2}J_{-2}(J_{-1})^2+
 \frac{(\pm g)^3}{2}J_{-1}J_{-2}J_{-1}+
 \frac{(\pm g)^3}{2}(J_{-1})^2J_{-2}
\right)
\\
n=4&  \frac{1}{4!}\left((\pm g)^4(J_{-1})^4  \right)
\, .
\end{array}
\end{equation*} 
Using the commutation relations \eqref{cr_heisenberg_algebra}, we obtain as $z^4$-coefficient of $E_+(\pm gJ,z)\Omega$ in equation \eqref{prod_dir1.1}, the element
\begin{equation}  \label{prod_dir4}
\pm \frac{g}{4}J_{-4}\Omega 
+\frac{g^2}{8}(J_{-2})^2\Omega
+\frac{g^2}{3}J_{-3}J_{-1}\Omega
\pm \frac{g^3}{4}J_{-2}(J_{-1})^2\Omega
+\frac{g^4}{24}(J_{-1})^4  \Omega
\, .
\end{equation}
In order to compute $B$, we have to add up the two versions of \eqref{prod_dir4}, so the summands with $\pm$ cancel while the other ones double, and we obtain for the $z^{-g^2+4}$-coefficient of $Y(e^g_b,z)e^g_b$:
$$
bv_g\otimes 1= b
\left(\frac{g^4}{12}(J_{-1})^4\Omega 
+\frac{2g^2}{3}J_{-3}J_{-1}\Omega
+\frac{g^2}{4}(J_{-2})^2\Omega \right)\otimes 1 \, .
$$

In the same manner, the $z^{-g^2+1}$-coefficient of the formal $z$-series 
$Y(\Omega\otimes e^{\pm gJ}, z)\left(\Omega\otimes e^{\mp gJ}\right)$ corresponds to $\left(\Omega\otimes e^{\pm gJ}\right)_{\left(g^2-2\right)}
\left(\Omega\otimes e^{\mp gJ}\right)$ respectively. Thus we have to consider the $z$-coefficient of $E_+(\pm gJ,z)\Omega$ in equation \eqref{prod_dir1.1}. This corresponds to fixing $m=1=n$ in formulas \eqref{e+_expansion} - \eqref{e+_moltiplication} to obtain $\pm gJ_{-1}$. Thus $(\pm gJ_{-1}\Omega)\otimes1$ are the desired $z^{-g^2+1}$-coefficients.
\end{proof}

\begin{prop}   \label{prop:linearly_generated}
For every nonzero complex number $g$, the primary vector $u\otimes 1\in V_{L_{2N}}$ of conformal weight 4 as in Lemma \ref{lem:primary_vector_4} is a linear combination of vectors $L_{-4}\Omega\otimes 1$, $L_{-2}\nu\otimes 1$ and $v_g\otimes 1$ as in Proposition \ref{prop:cur_primary4}.
\end{prop}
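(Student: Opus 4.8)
The plan is to observe that this is a purely linear‑algebraic statement inside a fixed three‑dimensional subspace of $M(1)\subset V_{L_{2N}}$, so that nothing beyond the explicit formulas already recorded in Lemma~\ref{lem:primary_vector_4} and Proposition~\ref{prop:cur_primary4} is needed. First I would note that all four vectors involved lie in
\[
S:=\operatorname{span}_{\C}\bigl\{(J_{-1})^{4}\Omega\otimes 1,\ (J_{-2})^{2}\Omega\otimes 1,\ J_{-3}J_{-1}\Omega\otimes 1\bigr\}\subset V_{L_{2N}},
\]
which is three‑dimensional because the three displayed vectors belong to the standard basis \eqref{eq:generators_vl2n} of $V_{L_{2N}}$: indeed $u\otimes 1\in S$ by Lemma~\ref{lem:primary_vector_4}(i), $L_{-2}(\nu\otimes 1)\in S$ and $L_{-4}(\Omega\otimes 1)\in S$ by the two formulas of Lemma~\ref{lem:primary_vector_4}(ii), and $v_g\otimes 1\in S$ by its definition in Proposition~\ref{prop:cur_primary4}. (Structurally, $S$ is the conformal‑weight‑$4$ subspace of $M(1)^{+}=\bigoplus_{p\ge 0}L(1,4p^{2})$, cf.~\eqref{dec_vir+}, with $L_{-4}(\Omega\otimes 1)$ and $L_{-2}(\nu\otimes 1)$ spanning the weight‑$4$ part of $L(1,0)$ and $u\otimes 1$ the weight‑$4$ part of $L(1,4)$; but this picture is not needed.)

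With respect to the ordered basis $\bigl((J_{-1})^{4}\Omega\otimes 1,\,(J_{-2})^{2}\Omega\otimes 1,\,J_{-3}J_{-1}\Omega\otimes 1\bigr)$ of $S$, one reads off that $L_{-4}(\Omega\otimes 1)$, $L_{-2}(\nu\otimes 1)$, $v_g\otimes 1$ and $u\otimes 1$ have coordinate vectors
\[
\bigl(0,\tfrac12,1\bigr),\qquad \bigl(\tfrac14,0,1\bigr),\qquad \Bigl(\tfrac{g^{4}}{12},\tfrac{g^{2}}{4},\tfrac{2g^{2}}{3}\Bigr),\qquad \bigl(\tfrac12,\tfrac34,-1\bigr),
\]
respectively. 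Since $\dim S=3$, it suffices to show that the first three of these form a basis of $\C^{3}$; then every vector of $S$, in particular $u\otimes 1$, is a linear combination of them. Equivalently, one checks that the $3\times 3$ determinant with those three columns is nonzero: a direct computation gives that it equals $\tfrac{1}{48}\,g^{2}(2g^{2}-1)$, which is nonzero for every $g\neq 0$ with $2g^{2}\neq 1$, and in particular for all the values $g=\sqrt{2N}m\ge 2$ arising in Proposition~\ref{prop:cur_primary4}. If an explicit expression is wanted, solving the linear system directly yields
\[
u\otimes 1=\frac{1}{g^{2}(2g^{2}-1)}\Bigl(27\,v_g\otimes 1-g^{2}(5g^{2}+2)\,L_{-2}(\nu\otimes 1)-3g^{2}(5-g^{2})\,L_{-4}(\Omega\otimes 1)\Bigr),
\]
which exhibits the linear combination and the non‑vanishing condition on $g$ at once.

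I do not expect any conceptual obstacle here. Once one has noticed that all four vectors live in the three‑dimensional space $S$, the proof reduces to the elementary determinant (or linear‑system) computation above, and all its inputs are already displayed in the preceding lemma and proposition. The only points requiring a little care are (a) verifying that the two remaining conformal‑weight‑$4$ basis monomials $J_{-2}(J_{-1})^{2}\Omega\otimes 1$ and $J_{-4}\Omega\otimes 1$ do not occur in any of the four vectors, which is immediate from the formulas cited above, and (b) carrying the rational coefficients through without slips.
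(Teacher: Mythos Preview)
Your argument is correct and is exactly the paper's approach: the paper's proof is the one-line observation that $L_{-4}\Omega\otimes 1$, $L_{-2}\nu\otimes 1$ and $v_g\otimes 1$ form a basis of the three-dimensional span of $(J_{-1})^4\Omega\otimes 1$, $J_{-3}J_{-1}\Omega\otimes 1$ and $(J_{-2})^2\Omega\otimes 1$, so that $u\otimes 1$ lies in their span. Your explicit determinant computation in fact shows that the proposition as literally stated (for \emph{every} nonzero $g$) is slightly too strong, since the three vectors become dependent when $2g^{2}=1$; this is harmless for the paper because in every application one has $g=\sqrt{2N}\,m\geq 2$.
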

\begin{proof}
It is easy to see that the three vectors form a basis for the vector subspace of $V_{L_{2N}}$ generated by $(J_{-1})^4\Omega\otimes1$, $J_{-3}J_{-1}\Omega\otimes1$ and $(J_{-2})^2\Omega\otimes1$. It follows that $u\otimes 1$ must be a linear combination of these three vectors.
\end{proof}

From previous results we obtain a generalization of \cite[Theorem 2.9]{DongGriess1998}.

\begin{cor} \label{cor:dong_general}
For any integer $N$ which is not a perfect square, $V_{L_{2N}}^+$ is generated (as VOA) by the conformal vector $\nu\otimes 1$ and the primary vector $e^{\sqrt{2N}}_1=\Omega\otimes \left(e^{\sqrt{2N}J}+e^{-\sqrt{2N}J}\right)$.
\end{cor}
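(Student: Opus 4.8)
The plan is to show that $W:=W(\{\nu\otimes 1,\, e^{\sqrt{2N}}_1\})$, the vertex subalgebra generated by the two vectors in the statement, equals $V_{L_{2N}}^+$. The inclusion $W\subseteq V_{L_{2N}}^+$ is clear, since $\nu\otimes 1$ and $e^{\sqrt{2N}}_1$ are $\phi$-invariant and $V_{L_{2N}}^+$ is a vertex subalgebra. For the reverse inclusion, note first that $\nu\otimes 1\in W$ forces each $L_n=(\nu\otimes 1)_{(n+1)}$ to map $W$ into itself, so $W$ is stable under all the $L_n$. I would then prove, in this order: (a) $M(1)^+\subseteq W$; (b) $e^{m\sqrt{2N}}_1:=\Omega\otimes(e^{m\sqrt{2N}J}+e^{-m\sqrt{2N}J})\in W$ for every $m\ge 1$; and finally assemble these using the Virasoro decomposition of $V_{L_{2N}}^+$.

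\emph{Step (a).} Since $N$ is not a perfect square we have $N\ge 2$, so $\sqrt{2N}\ge 2$ and Proposition \ref{prop:cur_primary4}, with $m=1$ and $b=1$, gives $v_{\sqrt{2N}}\otimes 1=(e^{\sqrt{2N}}_1)_{(2N-5)}\,e^{\sqrt{2N}}_1\in W$. Moreover $L_{-4}(\Omega\otimes 1)=(\nu\otimes 1)_{(-3)}(\Omega\otimes 1)$ and $L_{-2}(\nu\otimes 1)=(\nu\otimes 1)_{(-1)}(\nu\otimes 1)$ lie in $W$, so by Proposition \ref{prop:linearly_generated} the primary vector $u\otimes 1$ of Lemma \ref{lem:primary_vector_4}(i) lies in $W$ as well. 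By \cite[Theorem 2.9]{DongGriess1998}, $M(1)^+$ is generated as a VOA by $\nu\otimes 1$ and $u\otimes 1$ (the weight-$4$ generator used there is, by Lemma \ref{lem:primary_vector_4}(i), a nonzero multiple of $u\otimes 1$, this primary vector being unique up to scalar by \eqref{dec_vir_cur}); hence $M(1)^+\subseteq W$.

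\emph{Step (b), by induction on $m$}, the case $m=1$ being the hypothesis. For the step, assume $e^{k\sqrt{2N}}_1\in W$ for all $1\le k\le m$. Using \eqref{field2}, Remark \ref{rem:cr_e+-}, and the identity $E_-(\pm\sqrt{2N}J,z)(\Omega\otimes e^\beta)=\Omega\otimes e^\beta$ (as in the proof of Proposition \ref{prop:cur_primary4}), one gets $Y(\Omega\otimes e^{\varepsilon\sqrt{2N}J},z)(\Omega\otimes e^{\varepsilon'm\sqrt{2N}J})=z^{2N\varepsilon\varepsilon'm}\bigl(E_+(\varepsilon\sqrt{2N}J,z)\Omega\bigr)\otimes e^{(\varepsilon+\varepsilon'm)\sqrt{2N}J}$ for $\varepsilon,\varepsilon'\in\{\pm1\}$. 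Extracting the coefficient of $z^{2Nm}$ from $Y(e^{\sqrt{2N}}_1,z)\,e^{m\sqrt{2N}}_1$, i.e.\ computing $(e^{\sqrt{2N}}_1)_{(-2Nm-1)}\,e^{m\sqrt{2N}}_1$, the two terms with $\varepsilon=\varepsilon'$ produce $e^{(m+1)\sqrt{2N}}_1$, while the two terms with $\varepsilon=-\varepsilon'$ produce $c\otimes e^{-(m-1)\sqrt{2N}J}+\phi(c)\otimes e^{(m-1)\sqrt{2N}J}$, where $c\in M(1)$ is the coefficient of $z^{4Nm}$ in $E_+(\sqrt{2N}J,z)\Omega$ and we used that $\phi$ sends $E_+(\sqrt{2N}J,z)\Omega$ to $E_+(-\sqrt{2N}J,z)\Omega$ coefficientwise. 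This correction term is $\phi$-invariant and lies in the span of $M(1)\otimes e^{\pm(m-1)\sqrt{2N}J}$; by \eqref{dec_vir_2N+}, \eqref{dec_vir+} and Remark \ref{rem:primary_vect} (valid since $N$ is not a perfect square), that subspace intersected with $V_{L_{2N}}^+$ is, for $m\ge 2$, the irreducible Virasoro module $L(1,N(m-1)^2)$ generated by $e^{(m-1)\sqrt{2N}}_1$, and, for $m=1$, equals $M(1)^+$; in either case it is contained in $W$ by the inductive hypothesis and step (a) together with the $L_n$-stability of $W$. Hence $e^{(m+1)\sqrt{2N}}_1\in W$, completing the induction.

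Finally, by \eqref{dec_vir_2N+}, \eqref{dec_vir+} and Remark \ref{rem:primary_vect}, $V_{L_{2N}}^+=M(1)^+\oplus\bigoplus_{m>0}L(1,Nm^2)$, where the $m$-th summand is the irreducible Virasoro module generated by $e^{m\sqrt{2N}}_1$. Combining this with (a), (b) and the $L_n$-stability of $W$ yields $V_{L_{2N}}^+\subseteq W$, hence $W=V_{L_{2N}}^+$. The main technical point is the coefficient computation in the inductive step of (b), and in particular the verification that the correction term lies inside the already-produced layers; but this is a routine manipulation of the exponential vertex operators $Y(\Omega\otimes e^{\pm\sqrt{2N}J},z)$, entirely parallel to the calculations in the proof of Proposition \ref{prop:cur_primary4}.
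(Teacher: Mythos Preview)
Your argument is correct, but it takes a longer route than the paper's. Both proofs begin the same way: using Proposition~\ref{prop:cur_primary4} (with $b=1$) and Proposition~\ref{prop:linearly_generated} one shows that the weight-$4$ primary $u\otimes 1$ lies in $W$. At that point the paper simply invokes \cite[Theorem~2.9]{DongGriess1998}, which states that $V_{L_{2N}}^+$ itself (not merely $M(1)^+$) is generated by $\nu\otimes 1$, $u\otimes 1$ and $e^{\sqrt{2N}}_1$; since all three now lie in $W$, the corollary follows immediately. Your citation of \cite[Theorem~2.9]{DongGriess1998} in Step~(a) for the generation of $M(1)^+$ by $\nu\otimes1$ and $u\otimes1$ is therefore not quite right---that fact is rather \cite[Theorem~2.7]{DongGriess1998}---and having used only this weaker input you then go on to prove, in your inductive Step~(b) together with the Virasoro decomposition, precisely the remaining content of \cite[Theorem~2.9]{DongGriess1998}. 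The upshot is that your proof is more self-contained at that stage (it needs from Dong--Griess only the $M(1)^+$ statement), while the paper's proof is much shorter because it outsources Step~(b) entirely to the literature.
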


\begin{rem}   \label{rem:notation_dong_griess}
Note that in \cite{DongGriess1998}, the authors indicate with $\omega$ and $\beta$ our conformal vector $\nu\otimes 1$ and the current vector $J$ respectively (see \cite[p. 264]{DongGriess1998}). Moreover, their $u^m$ stands for the highest weight vector of the irreducible Virasoro submodule $L(1,m^2)$ of $M(1)$ (see \cite[p. 268]{DongGriess1998}), thus their $u^2$ is our $u\otimes1$ as in Lemma \ref{lem:primary_vector_4}. Finally, $e^n$ in \cite[p. 269]{DongGriess1998} coincides with our $e^{\sqrt{2N}}_1$, identifying $n$ with $N$ when they are different from a perfect square.
\end{rem}

\begin{proof}[Proof of Corollary \ref{cor:dong_general}]
Due to Remark \ref{rem:primary_vect} the vector $\Omega\otimes \left(e^{\sqrt{2N}J}+e^{-\sqrt{2N}J}\right)$ is in $V_{L_{2N}}^+$. Using Proposition \ref{prop:cur_primary4}, the vector $v_g\otimes 1\in V_{L_{2N}}^+$. The result then follows from Proposition \ref{prop:linearly_generated} and \cite[Theorem 2.9]{DongGriess1998}. 
\end{proof}

We are now ready to prove the key ingredient of our classification proof.
\begin{theo}     \label{theo:m1+_in_subvoa}
Let $N$ be an integer which is not a perfect square. Every unitary subalgebra $W$ of $V_{L_{2N}}$ properly containing $L(1,0)$ contains also $M(1)^+$.
\end{theo}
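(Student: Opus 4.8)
The plan is to reduce the statement to exhibiting the weight‑$4$ primary vector $u\otimes 1$ of Lemma \ref{lem:primary_vector_4} inside $W$. Since $L(1,0)\subseteq W$ we have $\nu\otimes 1\in W$, so by Proposition \ref{prop:cklw_unitary_subvoa} the conformal vector of $W$ is $\nu\otimes 1$ and $W$ is an $L(1,0)$‑submodule of $V_{L_{2N}}$; by \eqref{dec_vir_2N} and complete reducibility it is a direct sum of some of the irreducible Virasoro modules $L(1,p^2)$ $(p\ge 0)$ and $L(1,Nm^2)$ $(m>0)$. Since $N$ is not a perfect square, $N\ge 2$, all these conformal weights are pairwise distinct, and each $L(1,p^2)$ occurs with multiplicity one and inside $M(1)$. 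As $M(1)^+$ is generated as a vertex algebra by $\nu\otimes 1$ and $u\otimes 1$ by \cite[Theorem~2.9]{DongGriess1998}, it suffices to prove $u\otimes 1\in W$, and I would treat separately the cases $W\not\subseteq M(1)$ and $W\subseteq M(1)$.

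Suppose first $W\not\subseteq M(1)$. Since $M(1)=\bigoplus_{p\ge 0}L(1,p^2)$ contains no $L(1,Nm^2)$, the module $W$ must contain a copy of some $L(1,Nm^2)$, whose highest weight vector (Remark \ref{rem:primary_vect}) is a nonzero vector $w=a\,\Omega\otimes e^{gJ}+b\,\Omega\otimes e^{-gJ}$ with $g:=\sqrt{2N}m\ge 2$. Since $\theta(W)\subseteq W$ and $\theta(\Omega\otimes e^{\pm gJ})=\Omega\otimes e^{\mp gJ}$ by \eqref{theta_ext}, a short argument (using that $\theta$ is antilinear) shows that $W\cap\bigl(\mathbb{C}\,\Omega\otimes e^{gJ}\oplus\mathbb{C}\,\Omega\otimes e^{-gJ}\bigr)$ either equals this two‑dimensional space or is spanned by some $e^g_b=\Omega\otimes(e^{gJ}+be^{-gJ})$ with $b\neq 0$. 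In the former case $\Omega\otimes e^{\pm gJ}\in W$, so by Proposition \ref{prop:cur_primary4} the current $gJ_{-1}\Omega\otimes 1=(\Omega\otimes e^{gJ})_{(g^2-2)}(\Omega\otimes e^{-gJ})$ lies in $W$; hence the Heisenberg vertex algebra $M(1)$, being generated by $J_{-1}\Omega\otimes 1$, is contained in $W$, and in particular $u\otimes 1\in W$. In the latter case Proposition \ref{prop:cur_primary4} gives $b\,v_g\otimes 1=(e^g_b)_{(g^2-5)}e^g_b\in W$, so $v_g\otimes 1\in W$; since $L_{-4}\Omega\otimes 1$ and $L_{-2}\nu\otimes 1$ lie in $L(1,0)\subseteq W$, Proposition \ref{prop:linearly_generated} then yields $u\otimes 1\in W$.

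Suppose now $W\subseteq M(1)$, so that $W$ is a sub‑vertex‑algebra of the Heisenberg VOA containing $\nu\otimes 1$ and properly containing $L(1,0)$; since $M(1)=\bigoplus_{p\ge 0}L(1,p^2)$, there is a minimal $p\ge 1$ with $L(1,p^2)\subseteq W$, and I let $w\in M(1)$ be a nonzero primary vector of weight $p^2$. If $p=1$ then $w\in\mathbb{C}\,J_{-1}\Omega\otimes 1$, so $J_{-1}\Omega\otimes 1\in W$ and $M(1)\subseteq W$ as above; if $p=2$ then $w$ is a nonzero multiple of $u\otimes 1$ by Lemma \ref{lem:primary_vector_4}, so $u\otimes 1\in W$. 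If $p\ge 3$, I would use that $\phi$ is a vertex algebra automorphism with $\phi(w)=\pm w$ (since $w$ lies in $M(1)^+$ or $M(1)^-$ according to the parity of $p$, by \eqref{dec_vir_cur} and \eqref{dec_vir+}), so that every product $w_{(n)}w$ is $\phi$‑invariant and hence lies in $M(1)^+$; for $n=2p^2-5$ this is a weight‑$4$ element of $W$ lying in the weight‑$4$ subspace of $M(1)^+$, which by \eqref{dec_vir+} is spanned by $L_{-4}\Omega\otimes 1$, $L_{-2}\nu\otimes 1$ and $u\otimes 1$. Modulo $L(1,0)\subseteq W$ this element is therefore a scalar multiple of $u\otimes 1$, and it remains to check the scalar is nonzero. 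In all cases $u\otimes 1\in W$, whence $M(1)^+\subseteq W$.

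The conceptual core is Proposition \ref{prop:cur_primary4}, which converts the presence in $W$ of a lattice‑type primary $e^g_b$ into the presence of the Heisenberg weight‑$4$ primary $u\otimes 1$, and the latter forces all of $M(1)^+$ into $W$ by \cite{DongGriess1998}. The two points I expect to require care are the bookkeeping with the antilinear operator $\theta$ in the first case and the non‑vanishing of the structure constant in the sub‑case $p\ge 3$ of the second case — equivalently, the assertion that $M(1)$ has no sub‑vertex‑algebra strictly between $L(1,0)$ and $M(1)$ other than $M(1)^+$, which one may alternatively deduce from the known classification of sub‑vertex‑algebras of $M(1)$ containing $\nu\otimes 1$.
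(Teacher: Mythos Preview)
Your Case~1 ($W\not\subseteq M(1)$) is essentially the paper's argument: from a lattice-type primary $e^g_b\in W$ one produces $v_g\otimes 1$ via Proposition~\ref{prop:cur_primary4}, hence $u\otimes 1$ via Proposition~\ref{prop:linearly_generated}. The paper packages this as a proof by contradiction (assume no $L(1,p^2)$ with $p>0$ lies in $W$, derive $L(1,4)\subseteq W$), but the content is the same.

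The genuine gap is in your Case~2, sub-case $p\ge 3$: you reduce to showing that the $u\otimes 1$-component of $w_{(2p^2-5)}w$ (modulo $L(1,0)$) is nonzero, and you leave this unchecked. Your claim that this non-vanishing is ``equivalently'' the classification of sub-vertex-algebras of $M(1)$ is not quite right either --- even if that particular coefficient vanished, some other product of $w$ with itself might still yield $u\otimes 1$, so the implication only goes one way. The paper sidesteps this issue entirely by a different reduction: rather than aiming directly for $u\otimes 1\in W$, it reduces to showing that $W$ contains \emph{some} $L(1,p^2)$ with $p>0$, and then invokes \cite[Lemma~2.6 and Theorem~2.7(2)]{DongGriess1998} to conclude $M(1)^+\subseteq W$. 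Those results are precisely what handle your Case~2 cleanly (and what underlie the ``known classification'' you allude to at the end). A minor point: the statement that $M(1)^+$ is generated by $\nu\otimes 1$ and $u\otimes 1$ is \cite[Theorem~2.7(2)]{DongGriess1998} (with $m=1$), not Theorem~2.9.
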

\begin{proof}
Consider the decomposition of $V_{L_{2N}}$ as in \eqref{dec_vir_2N}. Keeping in mind the notational correspondence given in Remark \ref{rem:notation_dong_griess}, if $W$ contains $L(1,4p^2)$ for at least one $p>0$, then the theorem follows from \cite[Theorem 2.7(2)]{DongGriess1998}. Similarly, if $W$ contains $L(1,p^2)$ for at least one odd $p>0$, then we have that $\bigoplus_{\substack{j=0 \\ \mathrm{even}}}^{2p} L(1,j^2)\subseteq W$ by \cite[Lemma 2.6]{DongGriess1998}. Then, the theorem follows from \cite[Theorem 2.7(2)]{DongGriess1998}. We therefore have to prove that $W$ contains at least one $L(1,p^2)$ with $p>0$. 

Suppose for contradiction that $W$ does not contain any $L(1,p^2)$ with $p>0$, i.e., $W\cap L(1,p^2)=\{0\}$. This implies that $W$ has the following decomposition into Virasoro modules
\begin{equation}  \label{dec_W}
W= L(1,0)\oplus \bigoplus_{m>0} a_m L(1, Nm^2)
\end{equation}
where $a_m\in\left\{0,1,2\right\}$. 

We first want to prove that $a_m\not =2$, for all $m$. If $a_m$ were equal to $2$ for some $m$ then by Remark \ref{rem:primary_vect}, $\Omega\otimes e^{\pm \sqrt{2N}mJ}\in W$. By Proposition \ref{prop:cur_primary4}, $J_{-1}\Omega\otimes 1$ would then lie in $W$ but according to Theorem \ref{theo:structure_vu12n}(ii), we have $J_{-1}\Omega\otimes 1\in L(1,1)$, which cannot be the case as $W\cap L(1,1)=\{0\}$ by assumption. Therefore, $a_m\in\left\{0,1\right\}$; moreover, at least one $a_m$ equals $1$ because by assumption $W\not= L(1,0)$. Fix such an $m$.

Second, from Remark \ref{rem:primary_vect}, we know that, for this $m$, there exists a primary vector of conformal weight $g^2/2:=Nm^2$ in $W$ which must be a linear combination of the form $\Omega\otimes\left(ae^{gJ}+be^{-gJ}\right)$, for some $a,b\in\mathbb{C}$. $W$ is a unitary subalgebra, thus by Definition \ref{def:unitary_subalgebra}, $W$ must be invariant under the PCT operator $\theta$. If $a$ were $0$ then $\theta\left(\Omega\otimes be^{-gJ}\right)=\Omega\otimes\overline{b}e^{gJ}$ would be in $W$, which means that $a_m=2$; however, since $a_m=1$, we find that $a\not=0$. Similarly, $b\not =0$; thus, up to rescaling, we can suppose $a=1$. With $e^g_b=\Omega\otimes\left(e^{gJ}+be^{-gJ}\right)\in W$ as in Proposition  \ref{prop:cur_primary4}, the vector $v\otimes 1$ must lie in $W$. Furthermore, $L_{-2}\nu\otimes 1$ and $L_{-4}\Omega\otimes 1$ are in $W$ because they are vectors of $L(1,0)$. Thus by Proposition \ref{prop:linearly_generated}, $u\otimes 1$ lies in $W$. On the other hand, $u\otimes 1$ is the primary vector of conformal weight $4$ generating $L(1,4)$, so we obtain $W\cap L(1,4)\not=\{0\}$, which leads to a contradiction. Therefore $W$ must contain at least one $L(1,p^2)$ with $p>0$, which concludes the proof of the theorem.
\end{proof}

Theorem \ref{theo:m1+_in_subvoa} also allows us to explicitly calculate $\mathrm{Aut}_{(\cdot|\cdot)}V_{L_{2N}}$ for $N$ not a perfect square. We highlight that $\mathrm{Aut}V_{L_{2N}}$ for all $N$ has been calculated by \cite[Theorem 2.1]{DonNag1998}.

\begin{cor}    \label{cor:calculation_aut}
For any integer $N$ which is not a perfect square, we have that
\begin{equation}
\mathrm{Aut}_{\bilinear}V_{L_{2N}}=D_\infty .
\end{equation}
\end{cor}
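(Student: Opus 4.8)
The plan is to identify $\mathrm{Aut}_{\bilinear}V_{L_{2N}}$ by sandwiching it between $D_\infty$ (already embedded by Proposition \ref{prop:d_infty_inner_aut}) and $D_\infty$ again, using the Galois correspondence of Theorem \ref{theo:galois_correspondence_voa} together with the key Theorem \ref{theo:m1+_in_subvoa}. First I would recall that $D_\infty \subseteq \mathrm{Aut}_{\bilinear}V_{L_{2N}}$ is known, and $D_\infty$ is (topologically isomorphic to) a compact Lie group, so Theorem \ref{theo:galois_correspondence_voa} applies with $G = D_\infty$. It gives a bijection between closed subgroups $H \leq D_\infty$ and unitary subalgebras $W$ with $V_{L_{2N}}^{D_\infty} \subseteq W \subseteq V_{L_{2N}}$; by \eqref{fixed_point_d_infty} the smallest such $W$ is $M(1)^+$, and the largest closed subgroup $D_\infty$ corresponds to $M(1)^+$, the trivial subgroup to $V_{L_{2N}}$.

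Next I would argue that $\mathrm{Aut}_{\bilinear}V_{L_{2N}}$ acts trivially on $M(1)^+$, i.e. $M(1)^+ \subseteq V_{L_{2N}}^{\mathrm{Aut}_{\bilinear}V_{L_{2N}}}$. The point is that $M(1)^+$ is generated, as a VOA, by the conformal vector $\nu\otimes 1$ and the primary vector $u\otimes 1$ of conformal weight $4$ (this is \cite[Theorem 2.9]{DongGriess1998}, reformulated here via Remark \ref{rem:notation_dong_griess} and the computations in Lemma \ref{lem:primary_vector_4} and Proposition \ref{prop:linearly_generated}). Any automorphism fixes $\nu\otimes 1$ by definition, and since by \eqref{dec_vir_2N} the space $L(1,4)$ is the \emph{unique} irreducible Virasoro submodule of $V_{L_{2N}}$ of highest weight $4$ (using that $N$ is not a perfect square), the one-dimensional space of primary vectors of conformal weight $4$ is automorphism-invariant; unitarity forces the action on this line to be by a phase, and comparing the norm under $\theta$-invariance (or simply noting that $u\otimes 1$ is $\theta$-fixed and real, hence any unitary automorphism commuting with $\theta$ fixes it up to sign, and the sign is $+1$ on the generating line by looking at the VOA-generated subalgebra) pins it down. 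Therefore every unitary automorphism fixes $\nu\otimes 1$ and $u\otimes 1$, hence fixes the VOA they generate, namely $M(1)^+$.

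Then I would invoke Galois one more time in the other direction: since $\mathrm{Aut}_{\bilinear}V_{L_{2N}}$ is a compact group containing $D_\infty$, and since $M(1)^+ \subseteq V_{L_{2N}}^{\mathrm{Aut}_{\bilinear}V_{L_{2N}}}$, while on the other hand $V_{L_{2N}}^{\mathrm{Aut}_{\bilinear}V_{L_{2N}}} \subseteq V_{L_{2N}}^{D_\infty} = M(1)^+$, we get $V_{L_{2N}}^{\mathrm{Aut}_{\bilinear}V_{L_{2N}}} = M(1)^+ = V_{L_{2N}}^{D_\infty}$. By the injectivity part of the Galois correspondence (applied, say, with $G$ equal to any compact Lie group between $D_\infty$ and $\mathrm{Aut}_{\bilinear}V_{L_{2N}}$; here one must first check $\mathrm{Aut}_{\bilinear}V_{L_{2N}}$ is itself a Lie group, which follows because it is a closed subgroup of $\mathrm{Aut}V_{L_{2N}} = \mathrm{Aut}_{\bilinear}V_{L_{2N}} \rtimes (\text{something abelian}) $, or more directly because $\mathrm{Aut}V_{L_{2N}}$ is known to be a Lie group by \cite[Theorem 2.1]{DonNag1998}), the equality of fixed-point subalgebras forces $\mathrm{Aut}_{\bilinear}V_{L_{2N}} = D_\infty$.

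The main obstacle I anticipate is the bookkeeping around applying Theorem \ref{theo:galois_correspondence_voa}: that theorem requires the ambient group to be topologically isomorphic to a Lie group, so I need $\mathrm{Aut}_{\bilinear}V_{L_{2N}}$ to be a Lie group before I can use the correspondence to conclude it equals $D_\infty$. The cleanest route is to cite that $\mathrm{Aut}V_{L_{2N}}$ is a Lie group (indeed explicitly computed in \cite[Theorem 2.1]{DonNag1998}), whence its closed subgroup $\mathrm{Aut}_{\bilinear}V_{L_{2N}}$ is also a Lie group; alternatively one observes directly that the $\mathrm{Aut}$-action on the finite-dimensional generating space spanned by $\nu\otimes 1$ and the weight-$1,2,3,4$ subspaces is faithful. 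The secondary subtlety is confirming that the unique weight-$4$ primary line is genuinely fixed (not merely preserved) by unitary automorphisms, which is where I would lean on the $\theta$-invariance of $u\otimes 1$ and the structure of $M(1)^+$ as a VOA generated by that line together with $\nu\otimes 1$.
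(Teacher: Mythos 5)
Your route is genuinely different from the paper's: the paper argues by contradiction, taking $g\notin D_\infty$, applying the Galois correspondence to the closed subgroup it generates, and then invoking Theorem \ref{theo:m1+_in_subvoa} to force that subgroup back into $D_\infty$; you instead compute $V_{L_{2N}}^{\mathrm{Aut}_{\bilinear}V_{L_{2N}}}$ directly by showing every unitary automorphism fixes the generators $\nu\otimes 1$ and $u\otimes 1$ of $M(1)^+$, and then apply Galois injectivity once. The skeleton is sound, and you are in fact more explicit than the paper about needing $\mathrm{Aut}_{\bilinear}V_{L_{2N}}$ to be a Lie group before Theorem \ref{theo:galois_correspondence_voa} can be used with it as the ambient group (either of your two fixes works, e.g. Cartan's closed subgroup theorem inside $\mathrm{Aut}V_{L_{2N}}$, known from \cite{DonNag1998}). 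A small bookkeeping point: the generation statement you need, that $M(1)^+$ is generated by $\nu\otimes 1$ and $u\otimes 1$, is \cite[Theorem 2.7(2)]{DongGriess1998}; Theorem 2.9 there concerns $V_{L_{2N}}^+$, cf. Remark \ref{rem:notation_dong_griess}.

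The one genuine gap is the sign. Uniqueness of $L(1,4)$ in \eqref{dec_vir_2N} plus commutation with $\theta$ and unitarity only give $g(u\otimes 1)=\pm u\otimes 1$, and ``the sign is $+1$ by looking at the VOA-generated subalgebra'' is not an argument: nothing in the data you have examined (the grading, $\theta$-reality, unitarity) distinguishes $u$ from $-u$, so the assignment $\nu\mapsto\nu$, $u\mapsto -u$ is not visibly excluded, and if some $g$ realized it you would have $M(1)^+\not\subseteq V_{L_{2N}}^{\mathrm{Aut}_{\bilinear}V_{L_{2N}}}$ and your argument would collapse. To close this you need a real input, for instance: if $g|_{M(1)^+}$ were the nontrivial order-two automorphism determined by $u\mapsto -u$, its fixed points in $M(1)^+$ would be a proper unitary subalgebra containing $L(1,0)$, hence equal to $L(1,0)$ by \cite[Corollary 2.8]{DongGriess1998}; quantum Galois theory \cite{DonMas1999} for the group $\Z_2$ would then force $M(1)^+=L(1,0)\oplus M$ with $M$ an irreducible $L(1,0)$-module, contradicting the decomposition \eqref{dec_vir+}. (Equivalently, cite the triviality of the automorphism group of $M(1)^+$ from \cite{DongGriess1998}.) With that step inserted your proof is complete; it trades the paper's use of Theorem \ref{theo:m1+_in_subvoa} for the rigidity of $M(1)^+$, which is a fair exchange but one that must actually be paid for.
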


\begin{proof}
We have $D_\infty\subseteq\mathrm{Aut}_{\bilinear} V_{L_{2N}}$ by Proposition \ref{prop:d_infty_inner_aut}. Suppose for contradiction there exists $g\in\mathrm{Aut}_{\bilinear}V_{L_{2N}}\setminus D_\infty$ and let $G$ be the (proper) closed abelian subgroup of $\mathrm{Aut}_{\bilinear}V_{L_{2N}}$ generated by $g$. By the Galois correspondence given in Theorem \ref{theo:galois_correspondence_voa} and the fact that $L(1,0)$ is the only proper unitary subalgebra of $M(1)^+=V_{L_{2N}}^{D_\infty}$ as proved in \cite[Corollary 2.8]{DongGriess1998}, we can deduce that $V_{L_{2N}}^{\mathrm{Aut}_{\bilinear}V_{L_{2N}}}=L(1,0)$. Then, using again the Galois correspondence, $V_{L_{2N}}^G$ is a unitary subalgebra of $V_{L_{2N}}$ properly containing $L(1,0)$. By Theorem \ref{theo:m1+_in_subvoa}, $V_{L_{2N}}^G$ contains also $M(1)^+=V_{L_{2N}}^{D_\infty}$, so $G$ is a subgroup of $D_\infty$ by the Galois correspondence, which is a contradiction, so $\mathrm{Aut}_{\bilinear}V_{L_{2N}} = D_\infty$.
\end{proof}

From formula \eqref{vir_op_0}, we deduce that for every $N\in\Zplus$, the vector subspace of $V_{L_{2N}}$ of vectors of conformal weight 2 is spanned by
\begin{equation}  \label{vect_vl2n_conf_weight_2}
\begin{aligned}
J_{-2}\Omega\otimes 1,\;
\nu\otimes 1,\; J_{-1}\Omega\otimes e^{\sqrt{2}J}, \;
J_{-1}\Omega\otimes e^{-\sqrt{2}J}
&\quad (N=1) \\
J_{-2}\Omega\otimes 1,\;
\nu\otimes 1,\; \Omega\otimes e^{2J}, \;
\Omega\otimes e^{-2J}
&\quad (N=2) \\
J_{-2}\Omega\otimes 1, \;
\nu\otimes 1 \;
&\quad (N>2)
\, .
\end{aligned}
\end{equation}
Moreover, it is easy to verify by \eqref{vir_op_m} that
\begin{equation} \label{quasi_prim_vl4}
\begin{aligned}
L_1 \left( J_{-1}\Omega\otimes e^{\pm\sqrt{2}J} \right) =& 
\pm\sqrt{2}\Omega\otimes e^{\pm\sqrt{2}J}, \\
L_1 \left( J_{-2}\Omega\otimes 1 \right)
=& 2J_{-1}\Omega\otimes 1, \\
L_1(\nu\otimes1) =&0, \\
L_1\left(\Omega\otimes e^{\pm 2J}\right) =&0.
\end{aligned}
\end{equation}
Now, let $W$ be a unitary subalgebra of $V_{L_{2N}}$. Consider the conformal vector $\omega=e_W(\nu\otimes 1)$ of $W$ as in Proposition \ref{prop:cklw_unitary_subvoa} and let $Y(\omega,z):=\sum_{n\in\mathbb{Z}}L_n^{\omega}z^{-n-2}$ be the corresponding energy-momentum field. Using (ii) of Proposition \ref{prop:cklw_unitary_subvoa} together with \cite[Theorem 4.10(iv)]{Kac1997} and the quasi-primarity of the conformal vector (see Subsection \ref{subsec:unitary_voas}), we have that
\begin{equation}
\begin{aligned}
L_0 \omega = & L_0^\omega \omega=2\omega
\label{omega_conformal} \\
L_1 \omega = & L_1^\omega \omega=0
\, .
\end{aligned}
\end{equation}
Thus, equations \eqref{omega_conformal} imply that $\omega$ must be a quasi-primary vector in $V_{L_{2N}}$ of conformal weight 2. Hence, according to \eqref{vect_vl2n_conf_weight_2} and \eqref{quasi_prim_vl4}, recalling that all nonzero vectors on the right in \eqref{quasi_prim_vl4} are linearly independent, $\omega$ must be equal to a multiple of $\nu\otimes1$ for $N\not=2$. Then we have proved the following result (cf. also \cite[Proposition 5.1]{Carpi1999}):

\begin{prop}   \label{prop:every_sub_contains_l10}
Let $N\neq 2$. Then, every nontrivial unitary subalgebra
of $V_{L_{2N}}$ contains $L(1,0)$.
\end{prop}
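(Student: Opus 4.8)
The plan is to pin down the conformal vector $\omega := e_W(\nu\otimes 1)$ attached to a nontrivial unitary subalgebra $W$ by Proposition \ref{prop:cklw_unitary_subvoa}, and to show that it is forced to be a nonzero multiple of $\nu\otimes 1$. First I would record that, by Proposition \ref{prop:cklw_unitary_subvoa}, $\omega$ lies in $W$, is a conformal vector for the simple unitary VOA $W$, and satisfies $L_0^\omega|_W = L_0|_W$ and $L_1^\omega|_W = L_1|_W$. Since every conformal vector has conformal weight $2$ and is quasi-primary, this yields $L_0\omega = L_0^\omega\omega = 2\omega$ and $L_1\omega = L_1^\omega\omega = 0$ computed inside $V_{L_{2N}}$; in other words $\omega$ is a quasi-primary vector of $V_{L_{2N}}$ of conformal weight $2$.

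Next I would use the explicit basis of the weight-$2$ subspace of $V_{L_{2N}}$ recorded in \eqref{vect_vl2n_conf_weight_2} --- namely $J_{-2}\Omega\otimes 1$ and $\nu\otimes 1$, together with $J_{-1}\Omega\otimes e^{\pm\sqrt 2 J}$ when $N=1$ --- write $\omega$ as a linear combination of these, and impose $L_1\omega = 0$. By \eqref{quasi_prim_vl4}, applying $L_1$ sends $J_{-2}\Omega\otimes 1$ to $2J_{-1}\Omega\otimes 1$ and $J_{-1}\Omega\otimes e^{\pm\sqrt 2 J}$ to $\pm\sqrt 2\,\Omega\otimes e^{\pm\sqrt 2 J}$, while it kills $\nu\otimes 1$; since $J_{-1}\Omega\otimes 1$ and $\Omega\otimes e^{\pm\sqrt 2 J}$ are linearly independent vectors of conformal weight $1$, the equation $L_1\omega=0$ forces every coefficient except that of $\nu\otimes 1$ to vanish. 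Hence $\omega = \lambda(\nu\otimes 1)$ for some $\lambda\in\C$. This is precisely the step where the hypothesis $N\neq 2$ enters: when $N=2$ the additional weight-$2$ vectors $\Omega\otimes e^{\pm 2J}$ are themselves primary (by Remark \ref{rem:primary_vect}), so there is a larger space of quasi-primary weight-$2$ vectors and the conclusion fails.

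It remains to see $\lambda\neq 0$. If $\omega=0$ then $Y(\omega,z)=0$, hence $L_0^\omega=0$ and therefore $L_0|_W=0$ by Proposition \ref{prop:cklw_unitary_subvoa}(ii); but $V_{L_{2N}}$ is of CFT type, so $\ker L_0 = V_0 = \C\Omega$, giving $W=\C\Omega$, contrary to $W$ being nontrivial. Thus $\nu\otimes 1 = \lambda^{-1}\omega \in W$, and since $W$ is a vertex subalgebra of $V_{L_{2N}}$ and $L(1,0)$ is by definition the vertex subalgebra generated by $\nu\otimes 1$ (Example \ref{ex:virasoro_subalgebra}), we conclude $L(1,0)\subseteq W$. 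The only place that needs care is the second paragraph --- keeping the bookkeeping of coefficients straight, and, more conceptually, noticing that the identity $L_1^\omega=L_1$ on $W$ is exactly what allows the quasi-primarity computation to be carried out in the ambient $V_{L_{2N}}$ rather than abstractly in $W$; beyond that the argument is a short assembly of results already in hand.
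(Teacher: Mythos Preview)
Your proposal is correct and follows essentially the same approach as the paper: the paper also computes the weight-$2$ subspace \eqref{vect_vl2n_conf_weight_2}, applies $L_1$ as in \eqref{quasi_prim_vl4}, uses \eqref{omega_conformal} to see that $\omega=e_W(\nu\otimes 1)$ is quasi-primary of weight $2$, and concludes that for $N\neq 2$ it must be a multiple of $\nu\otimes 1$. Your write-up actually spells out one step the paper leaves implicit, namely why the multiple must be nonzero (and hence why $\nu\otimes 1\in W$).
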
 

\begin{rem}
For $N=k^2$ with $k$ a positive integer, Proposition \ref{prop:every_sub_contains_l10} follows also directly from Proposition \ref{prop:fixed_point_zk} and Theorem \ref{theo:subvoas_vl2}.
\end{rem}

In the following we investigate the case $N=2$ to be able to complete the classification. 

First of all, we know from \cite[Lemma 3.1]{DGH1998} (put $2J=\alpha$ there), cf. also the proof of \cite[Theorem 6.3]{DMZ1994} putting $2J=\beta$ there, that $V_{L_4}$ contains at least two distinct copies of $L\left(\frac{1}{2},0\right)$ generated by Virasoro vectors
\begin{eqnarray}
\omega_0 &:=& \frac{\nu\otimes1}{2}+\frac{\Omega\otimes\left( e^{2J}+e^{-2J}\right)}{4} \\
\omega_\pi &:=& \frac{\nu\otimes1}{2}-\frac{\Omega\otimes\left( e^{2J}+e^{-2J}\right)}{4}
\, .
\end{eqnarray}
Let $W_0$ and $W_\pi$ be the vertex subalgebras of $V_{L_4}$ generated by $\omega_0$ and $\omega_\pi$ respectively. Then, thanks to Example \ref{ex:family_theta_inv}, they are unitary subalgebras of $V_{L_4}$, unitarily isomorphic to the unitary Virasoro VOA $L\left(\frac12,0\right)$. Therefore, our goal is to prove the following result.

\begin{theo}    \label{theo:case_n=2}
For every $t\in\T$, the vector 
\begin{equation}
\omega_t:=\frac{\nu\otimes1}{2}+\frac{\Omega\otimes\left( e^{it}e^{2J}+e^{-it}e^{-2J}\right)}{4} \in V_{L_4}
\end{equation}
is a Virasoro vector with central charge $\frac12$, generating a unitary subalgebra $W_t\subset V_{L_4}$ unitarily isomorphic to $L\left(\frac12,0\right)$.
Moreover, if $W$ is a nontrivial unitary subalgebra of $V_{L_4}$ that does not contain $L(1,0)$ then $W=W_t=g_{4,t}\left(W_0\right)$ for some $t\in\T$.
\end{theo}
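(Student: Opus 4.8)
The plan is to split the statement into two parts: first, verifying that each $\omega_t$ is a $c=\tfrac12$ Virasoro vector generating a copy of $L(\tfrac12,0)$, and second, the classification of nontrivial unitary subalgebras $W$ not containing $L(1,0)$. For the first part, I would observe that $\omega_t = g_{4,t}(\omega_0)$: indeed, by the action formula \eqref{action_torus_basis_elem} with $2N=4$ and $\alpha = \pm 2J$ (so $(J|\alpha) = \pm 2$, giving a phase $e^{\pm it}$), one checks directly that $g_{4,t}$ sends $\Omega \otimes (e^{2J}+e^{-2J})$ to $\Omega \otimes (e^{it}e^{2J}+e^{-it}e^{-2J})$ and fixes $\nu\otimes 1$. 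Since $g_{4,t}$ is a unitary VOA automorphism and $\omega_0$ is already known (from \cite{DGH1998,DMZ1994}, as recalled just before the theorem) to be a $c=\tfrac12$ Virasoro vector generating $W_0 \cong L(\tfrac12,0)$, its image $\omega_t$ is also such a Virasoro vector and $W_t := g_{4,t}(W_0)$ is a unitary subalgebra unitarily isomorphic to $L(\tfrac12,0)$. Example \ref{ex:family_theta_inv} together with $\theta$-invariance of $\omega_t$ confirms it generates a unitary subalgebra.

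For the classification part, let $W$ be a nontrivial unitary subalgebra of $V_{L_4}$ not containing $L(1,0)$. The strategy is to pin down the conformal vector $\omega = e_W(\nu\otimes 1)$ of $W$ from Proposition \ref{prop:cklw_unitary_subvoa}. As in the discussion preceding Proposition \ref{prop:every_sub_contains_l10}, $\omega$ is a $\theta$-invariant quasi-primary vector of conformal weight $2$ in $V_{L_4}$. Using the basis \eqref{vect_vl2n_conf_weight_2} for $N=2$, the vectors $L_1$-equations \eqref{quasi_prim_vl4}, and $\theta$-invariance (which rules out the term $J_{-2}\Omega\otimes 1$, since $\theta(J_{-2}\Omega\otimes 1) = -J_{-2}\Omega\otimes 1$, and forces equal coefficients on $e^{2J}$ and $e^{-2J}$ up to the conjugation built into $\theta$), I expect $\omega$ must have the form $\lambda\,\nu\otimes 1 + \mu\,\Omega\otimes(e^{i t}e^{2J} + e^{-it}e^{-2J})$ for suitable real $\lambda$ and $t\in\T$ and $\mu\in\mathbb{C}$. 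Since $W$ does not contain $L(1,0)$, Proposition \ref{prop:every_sub_contains_l10} shows $W$ is not all of a subalgebra containing $L(1,0)$; in particular $\omega \neq \nu\otimes 1$, so $\mu\neq 0$. Then computing $L_0\omega = 2\omega$ and the normalization $(\omega|\omega)$ coming from $\omega$ being a conformal vector of a simple unitary VOA (its central charge $c_\omega = 2(\omega|\omega)$ must be an admissible Virasoro value $\leq 1$, and in fact $c_\omega < 1$ since $\omega \neq \nu\otimes 1$ forces $W \subsetneq V_{L_4}$ with $W$ not containing $L(1,0)$), I would solve for $\lambda$ and $\mu$ and obtain exactly $\lambda = \tfrac12$, $\mu = \tfrac14$, i.e.\ $\omega = \omega_t$. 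This identifies $W_t \subseteq W$.

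Finally, to upgrade $W_t \subseteq W$ to equality, I would argue that $W_t \cong L(\tfrac12,0)$ and $W$, having the same conformal vector $\omega_t = e_W(\nu\otimes 1)$, is a simple unitary VOA with that conformal vector (Proposition \ref{prop:cklw_unitary_subvoa}(iii)); if $W$ properly contained $W_t$ it would contain a nontrivial $L(\tfrac12,0)$-submodule, and one can then derive a contradiction by pushing $W$ forward via $g_{4,-t}$ to reduce to the case $t=0$ and analyzing $W_0 \subseteq W' := g_{4,-t}(W)$ inside $V_{L_4}$, using the known decomposition of $V_{L_4}$ into $L(\tfrac12,0)$-modules (or equivalently into Virasoro modules from \cite{DongGriess1998}) to show that any unitary subalgebra strictly larger than $W_0$ but whose conformal vector is still $\omega_0$ would have to contain vectors forcing $L(1,0) \subset W'$, contradicting the hypothesis. \textbf{The main obstacle} I anticipate is precisely this last step: ruling out a unitary subalgebra $W$ that strictly contains $W_t$ while still having conformal vector $\omega_t$ — one must carefully control which $L(\tfrac12,0)$-submodules of $V_{L_4}$ can be adjoined to $W_t$ and show each such extension inevitably produces $L(1,0)$. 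Everything else (the automorphism-conjugacy identification $\omega_t = g_{4,t}(\omega_0)$, the weight-2 quasi-primary analysis, and the $\theta$-invariance constraints) is routine given the machinery already assembled in the excerpt.
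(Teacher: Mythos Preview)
Your first part (identifying $\omega_t = g_{4,t}(\omega_0)$ and transporting the Virasoro structure through the unitary automorphism) matches the paper exactly.

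For pinning down the conformal vector $\omega = e_W(\nu\otimes 1)$, your setup is right but the constraints you list do not suffice. The condition $L_0\omega = 2\omega$ (with the ambient $L_0$) is automatic once $\omega$ sits in the weight-$2$ space, so it contributes nothing. Knowing only that $c_\omega = 2(\omega|\omega)$ is a discrete-series value strictly below $1$ leaves one inequality and infinitely many candidate values of $c_\omega$, still with two unknowns $\lambda,\mu$. The paper's Lemma~\ref{lem:omega_tau} uses a genuinely different constraint: it computes $\omega_{(1)}\omega$ (the $z^{-2}$-coefficient of $Y(\omega,z)\omega$) explicitly and imposes $L_0^\omega\omega = 2\omega$. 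This is a quadratic identity in the coefficients and forces $\lambda=\tfrac12$, $|\mu|=\tfrac14$. An alternative that would rescue your outline: note that $\nu-\omega$ is also a conformal vector with central charge $1-c_\omega$, so both $c_\omega$ and $1-c_\omega$ lie in the discrete series, which forces $c_\omega=\tfrac12$; then the projection identity $(\omega|\nu)=(\omega|\omega)$ gives $\lambda(\nu|\nu)=\tfrac14$, hence $\lambda=\tfrac12$, and $(\omega|\omega)=\tfrac14$ then yields $\mu=\tfrac14$. But you would need to say this; as written, the step is a gap.

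For the upgrade $W_t\subseteq W \Rightarrow W=W_t$, you are making it much harder than necessary. The paper bypasses any $L(\tfrac12,0)$-module analysis of $V_{L_4}$ entirely: once $\omega=\omega_t$, Proposition~\ref{prop:cklw_unitary_subvoa}(iii) makes $W$ a simple unitary VOA with central charge $\tfrac12$, and $L(\tfrac12,0)$ is the \emph{unique} simple unitary VOA with $c=\tfrac12$ (cf.\ \cite{DongLin2015}), so $W=W_t$ in one line. Your proposed route via module decompositions could in principle be pushed through, but the ``main obstacle'' you anticipate is an artifact of the approach and does not arise in the paper's argument.
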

To prove Theorem \ref{theo:case_n=2}, we need the following result.

\begin{lem}  \label{lem:omega_tau}
Let $W\subset V_{L_4}$ be a nontrivial unitary subalgebra which does not contain $L(1,0)$, then
$$
\omega=e_W(\nu\otimes1)\in\left\{\omega_t \;|\; t\in\T \right\}
$$
\end{lem}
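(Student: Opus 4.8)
The plan is to pin down $\omega:=e_W(\nu\otimes1)$ completely. By \eqref{omega_conformal}, $\omega$ is a quasi-primary vector of conformal weight $2$ in $V_{L_4}$. Inspecting \eqref{vect_vl2n_conf_weight_2} and \eqref{quasi_prim_vl4} for $N=2$, the vector $J_{-2}\Omega\otimes1$ is not quasi-primary while $\nu\otimes1$, $\Omega\otimes e^{2J}$ and $\Omega\otimes e^{-2J}$ are (their images under $L_1$ being $0$), so $\omega=a\,\nu\otimes1+b\,\Omega\otimes e^{2J}+c\,\Omega\otimes e^{-2J}$ for some $a,b,c\in\C$. Since $\theta(\omega)=\omega$ by Proposition \ref{prop:cklw_unitary_subvoa}(i), and $\theta$ is antilinear with $\theta(\nu\otimes1)=\nu\otimes1$ and $\theta(\Omega\otimes e^{\pm2J})=\Omega\otimes e^{\mp2J}$ by \eqref{theta_ext}, I obtain $a\in\R$ and $c=\overline{b}$. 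Writing $\nu':=\nu\otimes1$ and $e_\pm:=\Omega\otimes e^{\pm2J}$, this gives $\omega=a\nu'+be_++\overline{b}e_-$ with $a\in\R$.

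Next I would extract two polynomial relations between $a$ and $b$ from the fact that $\omega$ is a Virasoro vector (Proposition \ref{prop:cklw_unitary_subvoa}(ii)). Since $[L^\omega_{-1},L^\omega_{-2}]=L^\omega_{-3}$ and $L^\omega_{-1}\Omega=\omega_{(0)}\Omega=0$ by \eqref{Omega_is_0}, we have $\omega_{(0)}\omega=L^\omega_{-1}\omega=L^\omega_{-1}L^\omega_{-2}\Omega=L^\omega_{-3}\Omega=\omega_{(-2)}\Omega=T\omega$, so I would compute both $\omega_{(0)}\omega$ and $T\omega$ in the basis of $V_{L_4}$. For the right-hand side, \eqref{cr_vir_cur} gives $T\nu'=L_{-1}(\nu\otimes1)=J_{-2}J_{-1}\Omega\otimes1$, and \eqref{field2} together with \eqref{e+_expansion}--\eqref{e+_moltiplication} gives $Te_\pm=\pm2\,J_{-1}\Omega\otimes e^{\pm2J}$, so $T\omega=a\,J_{-2}J_{-1}\Omega\otimes1+2b\,J_{-1}\Omega\otimes e^{2J}-2\overline{b}\,J_{-1}\Omega\otimes e^{-2J}$. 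For the left-hand side I would expand $\omega_{(0)}\omega$ bilinearly: using that $e_\pm$ is primary of weight $2$ and the skew-symmetry $Y(u,z)v=e^{zT}Y(v,-z)u$ one gets $\nu'_{(0)}\nu'=T\nu'$ and $\nu'_{(0)}e_\pm+(e_\pm)_{(0)}\nu'=2Te_\pm$; and using \eqref{field2}, Remark \ref{rem:cr_e+-} and \eqref{e+_expansion}--\eqref{e+_moltiplication} (as in the proof of Proposition \ref{prop:cur_primary4}) one gets $(e_+)_{(0)}e_+=(e_-)_{(0)}e_-=0$, since $Y(\Omega\otimes e^{\pm2J},z)(\Omega\otimes e^{\pm2J})$ has lowest $z$-power $z^{4}$, while $(e_+)_{(0)}e_-+(e_-)_{(0)}e_+$ equals the $z^{3}$-coefficient of $\bigl(E_+(2J,z)+E_+(-2J,z)\bigr)\Omega$ tensored with $1$, in which the odd-degree-in-$J$ contributions cancel and only the $n=2$ summand of \eqref{e+_expansion} survives, giving $4\,J_{-2}J_{-1}\Omega\otimes1$. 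Collecting everything,
\begin{equation*}
\omega_{(0)}\omega=(a^2+4|b|^2)\,J_{-2}J_{-1}\Omega\otimes1+4ab\,J_{-1}\Omega\otimes e^{2J}-4a\overline{b}\,J_{-1}\Omega\otimes e^{-2J}.
\end{equation*}

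Comparing the coefficients of the linearly independent vectors $J_{-2}J_{-1}\Omega\otimes1$ and $J_{-1}\Omega\otimes e^{\pm2J}$ in the identity $\omega_{(0)}\omega=T\omega$ then yields $a^2+4|b|^2=a$ and $4ab=2b$. To finish, I would rule out $b=0$: in that case $a^2=a$, hence $a\in\{0,1\}$; but $\omega\neq0$, since otherwise $\omega$ would be a conformal vector of central charge $0$ for the simple unitary VOA $W$ by Proposition \ref{prop:cklw_unitary_subvoa}(iii), forcing $W=\C\Omega\otimes1$ against nontriviality; and $a=1$ would give $\omega=\nu'$, whence $L(1,0)=W(\nu')\subseteq W$, contradicting the hypothesis. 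Therefore $b\neq0$, so $a=\frac{1}{2}$, and then $a^2+4|b|^2=a$ forces $|b|=\frac{1}{4}$. Writing $b=\frac{1}{4}e^{it}$ with $t\in\T$ we get $\omega=\frac{1}{2}\nu'+\frac{1}{4}\bigl(e^{it}e_++e^{-it}e_-\bigr)=\omega_t$, which proves the lemma. The step I expect to be the main obstacle is the explicit OPE computation of $\omega_{(0)}\omega$, specifically identifying the $J_{-2}J_{-1}\Omega$-coefficient produced by $(e_\pm)_{(0)}e_\mp$ through the expansion \eqref{e+_expansion}--\eqref{e+_moltiplication}; the remaining manipulations are routine.
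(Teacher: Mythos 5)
Your proof is correct and follows essentially the same route as the paper: reduce to the ansatz $\omega=a\,\nu\otimes1+b\,\Omega\otimes e^{2J}+\overline{b}\,\Omega\otimes e^{-2J}$ with $a\in\R$ via quasi-primarity in weight $2$ and $\theta$-invariance, then extract the two scalar constraints $a=\tfrac12$ and $a^2+4|b|^2=a$ (after ruling out $b=0$) from one explicitly computed mode of $Y(\omega,z)\omega$. The only difference is the mode chosen: the paper imposes $L_0^\omega\omega=\omega_{(1)}\omega=2\omega$ and computes the $z^{-2}$-coefficients, whereas you impose $L_{-1}^\omega\omega=\omega_{(0)}\omega=T\omega$ and compute the $z^{-1}$-coefficients; both calculations are correct and yield the same system.
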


\begin{proof}
First of all, $L(1,0)\not\subset W$ implies that the conformal vector $\omega$ of $W$ must be different from $\nu\otimes 1$. Moreover, it must be different from a multiple of $\nu\otimes 1$ because $W$ is nontrivial and because it has to satisfy $L_0^\omega \omega = 2 \omega$ as in \eqref{omega_conformal}. Consider now the energy-momentum field $Y(\omega,z):=\sum_{n\in\mathbb{Z}}L_n^{\omega}z^{-n-2}$ corresponding to $\omega$. Then \eqref{omega_conformal} implies that $\omega$ must be a linear combination of the three quasi-primary vectors of $V_{L_4}$ in \eqref{vect_vl2n_conf_weight_2}, that is
$$
\omega=a\nu\otimes 1+b\Omega\otimes e^{2J}+d\Omega\otimes e^{-2J}
$$
for some $a,b,d\in\mathbb{C}$. Note that either $b\not=0$ or $d\not=0$ because $\omega$ is not a multiple of $\nu\otimes 1$. Using (i) of Proposition \ref{prop:cklw_unitary_subvoa}, namely $\theta(\omega) = \omega$, we obtain
$$
\omega=a\nu\otimes1+\Omega\otimes\left(be^{2J}+\overline{b}e^{-2J}\right)
=a\nu\otimes1+be^2
$$  
for some $a\in\mathbb{R}$, $b\in\mathbb{C}\setminus\{0\}$ and $e^2:=e^2_{\frac{\overline{b}}{b}}=\Omega\otimes\left(e^{2J}+\frac{\overline{b}}{b}e^{-2J}\right)$.

We want to calculate $L_0^\omega\omega$. Consider
\begin{equation}   \label{omega_main_calculation}
\sum_{n\in\mathbb{Z}}L_n^\omega\omega z^{-n-2}=Y(\omega, z)\omega=aY(\nu\otimes 1, z)\omega+bY(e^2, z)\omega
\, .
\end{equation}
Then we have that
\begin{equation}    \label{l0_omega}
L_0^\omega \omega = aL_0\omega + abC_{-2,\nu}+b^2C_{-2,e^2}=2a\omega + abC_{-2,\nu}+b^2C_{-2,e^2}
\end{equation}
where $C_{-2,\nu}$ and $C_{-2, e^2}$ are $z^{-2}$-coefficients of $Y(e^2,z)(\nu\otimes1)$ and $Y(e^2,z)e^2$ respectively.

Proceeding as in the proof of Proposition \ref{prop:cur_primary4}, we find
\begin{equation*}
C_{-2, e^2} =\frac{\overline{b}}{b}4(J_{-1})^2\Omega\otimes 1 =\frac{\overline{b}}{b}8\nu\otimes 1  
\, .
\end{equation*}
To calculate $C_{-2,\nu}$ , consider 
\begin{align*}
Y(\Omega\otimes e^{2J},z)(\nu\otimes 1) =& E_+(2J,z)E_-(2J,z)\nu\otimes e^{2J} \\
\frac{\overline{b}}{b}Y(\Omega\otimes e^{-2J},z)(\nu\otimes 1) =&
\frac{\overline{b}}{b}E_+(-2J,z)E_-(-2J,z)\nu\otimes e^{-2J}
\end{align*}
where we have used firstly Remark \ref{rem:cr_e+-} and secondly formula \eqref{action_z_alpha0}. Using commutation relations \eqref{cr_heisenberg_algebra} and the fact that $J_j\Omega=0$ for all $j\geq0$ by construction, we get
\begin{align*}
J_j(\nu\otimes e^{\pm2J})=&\frac{1}{2}J_j\left(J_{-1}\right)^2\Omega\otimes e^{\pm2J}=0 
\quad\forall j\geq2 \\
\left(J_1\right)^j(\nu\otimes e^{\pm2J})=&\frac{1}{2}\left(J_1\right)^j\left(J_{-1}\right)^2\Omega\otimes e^{\pm2J}=0 
\quad\forall j\geq 3\\ 
\left(J_1\right)^2(\nu\otimes e^{\pm2J})=&\frac{1}{2}\left(J_1\right)^2\left(J_{-1}\right)^2\Omega\otimes e^{\pm2J}
=\Omega\otimes e^{\pm2J} \\
J_1(\nu\otimes e^{\pm2J})=&\frac{1}{2}J_1\left(J_{-1}\right)^2\Omega\otimes e^{\pm2J}=J_{-1}\Omega\otimes e^{\pm2J}
\, .
\end{align*}
Together with \eqref{e-_expansion} - \eqref{e-_multiplication} this implies 
\begin{equation*}
\begin{split}
E_-&(\pm 2J,z)(\nu\otimes e^{\pm2J}) =\\
&= \nu\otimes e^{\pm2J} +
\frac{-1}{1!}\cdot \frac{\pm 2J_1(\nu\otimes e^{\pm2J})}{1}z^{-1} 
+ \frac{(-1)^2}{2!} \cdot\frac{\left(\pm2J_1\right)^2(\nu\otimes e^{\pm2J})}{1\cdot1}z^{-2} \\
&= \nu\otimes e^{\pm2J} \mp\left(2J_{-1}\Omega\otimes e^{\pm2J} \right)z^{-1}+\left(2\Omega\otimes e^{\pm2J}\right) z^{-2}
\, .
\end{split}
\end{equation*}

Using \eqref{e+_expansion} and \eqref{e+_moltiplication}, we find
\begin{equation*}
E_+(\pm2J,z)=I\pm 2J_{-1}z+O(z^2),
\end{equation*}
thus
\begin{equation*}
\begin{split}
Y(\Omega\otimes e^{2J},z)(\nu\otimes 1) =& E_+(2J,z)E_-(2J,z)\nu\otimes e^{2J} \\
=& \left(2\Omega\otimes e^{2J}\right) z^{-2} +O(z^{-1})
\end{split}
\end{equation*}
and
\begin{equation*}
\begin{split}
\frac{\overline{b}}{b} Y(\Omega\otimes e^{-2J},z)(\nu\otimes 1) =& \frac{\overline{b}}{b}E_+(-2J,z)E_-(-2J,z)\nu\otimes e^{-2J}\\
=&\frac{\overline{b}}{b} \left(2\Omega\otimes e^{-2J}\right) z^{-2} + O(z^{-1}).
\end{split}
\end{equation*}
We deduce that
\begin{equation}
C_{-2,\nu}=2e^2  
\, .
\end{equation}
We can therefore rewrite identity \eqref{l0_omega}:
\begin{equation}
L_0^\omega\omega=2a\omega+2abe^2+8|b|^2\nu\otimes 1=2(a^2+4|b|^2)\nu\otimes1+4abe^2
\label{l0_omega_omega}
\end{equation}
Imposing equation \eqref{omega_conformal}, namely $L_0^\omega\omega = 2\omega$, and using \eqref{l0_omega_omega}, we have the following identity
\begin{equation*}
2a\nu\otimes 1+2be^2 = 2(a^2+4|b|^2)\nu\otimes1+4abe^2
\end{equation*}
which has precisely the solutions
\[
a = \frac{1}{2},\quad b = \frac{e^{it}}{4}, \quad t\in\T.
\]
Thus $\omega\in\{\omega_t:\;  t\in\T\}$.
\end{proof}

\begin{proof}[Proof of Theorem \ref{theo:case_n=2}]
First, note that $\omega_t=g_{4,t}(\omega_0)$  for all $t\in\T$, where $g_{4,t}$ as in \eqref{action_torus} is a unitary automorphism of $V_{L_4}$. Thus, $\omega_t\in V_{L_4}$ are still Virasoro vectors of central charge $\frac{1}{2}$ and by Example \ref{ex:family_theta_inv}, they generate unitary subalgebras $W_t=g_{4,t}\left(W_0\right)$ of $V_{L_4}$ unitarily isomorphic to $L\left(\frac12,0\right)$.

Second, we have to show that every unitary subalgebra $W\subset V_{L_4}$ is of the above type. Lemma \ref{lem:omega_tau} proves that $\omega=e_W(\nu\otimes1)$ must be of the form $\omega_t$, for some $t\in\T$. Accordingly, $W$ is a unitary vertex operator algebra with conformal vector $\omega_t$ and hence its central charge is $c=\frac12$ . On the other hand $L\left(\frac12,0\right)$ is the unique, up to isomorphism, unitary vertex operator algebra with central charge $c=\frac12$, see e.g. \cite{DongLin2015}. It follows that $W=W_t$.
\end{proof}

\begin{rem}   \label{rem:abouot_w_tau}
Note that every $W_t$ is contained in $V_{L_4}^{D_1^t}$ for all $t\in\T$. By Remark \ref{rem:action_d_k^t}, $D_1^t= D_1^{t+\pi}$ for all $t\in\T$. Moreover, under the identification $V_{L_4}^+=V_{L_4}^{D_1}=W_0\otimes W_\pi$ given by \cite[Lemma 3.1 (ii)]{DGH1998}, we have that $V_{L_4}^{D_1^t}=W_t\otimes W_{t+\pi}$ for all $t\in\T$.
\end{rem}

Then we can prove the classification theorem.

\begin{theo}   \label{theo:resume_theo}
The nontrivial unitary subalgebras $W$ of the
rank-one lattice VOAs $V_{L_{2N}}$ are classified as follows. Apart from the Virasoro subalgebra $L(1,0)$, we have:
\begin{itemize}
\item[(i)] If $N=k^2$ for some positive integer k then, after the
identification of $V_{L_{2N}}$ with
$V_{L_2}^{\Z_k}$, $W= V_{L_2}^{H}$ for some closed subgroup $H\subseteq SO(3)$ containing $\Z_k$.
\item[(ii)] If $N>2$ is not a perfect square then $W= V_{L_{2N}}^{H}$ for some closed subgroup $H\subseteq D_\infty$.
\item[(iii)] If $N=2$, then either $W= V_{L_4}^{H}$ for some closed subgroup $H\subseteq D_\infty$ or $W=W_t$ for some $t\in\T$.
\end{itemize}
\end{theo}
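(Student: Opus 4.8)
The plan is to derive the classification by assembling the results already established, treating separately the three ranges of $N$ and invoking in each of them the Galois correspondence of Theorem~\ref{theo:galois_correspondence_voa}. So let $W$ be a nontrivial unitary subalgebra of $V_{L_{2N}}$.

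If $N=k^2$, I would first use Proposition~\ref{prop:fixed_point_zk} and the identification \eqref{inclusion_rank_one_latt} to regard $V_{L_{2k^2}}$ as the unitary subalgebra $V_{L_2}^{\Z_k}$ of $V_{L_2}\cong V_{\mathfrak{sl}(2,\C)_1}$. Since the PCT operator $\theta$ and the operator $L_1$ on $V_{L_{2k^2}}$ are the restrictions of those on $V_{L_2}$, the subalgebra $W$ is a nontrivial unitary subalgebra of $V_{L_2}$ as well, and Theorem~\ref{theo:subvoas_vl2} then gives $W=V_{L_2}^H$ for some closed $H\subseteq\SO$. Every such $V_{L_2}^H$ contains $V_{L_2}^{\SO}=L(1,0)$ and hence lies in the range of the (inclusion-reversing) Galois correspondence of Theorem~\ref{theo:galois_correspondence_voa} applied with $G=\SO$; therefore $W=V_{L_2}^H\subseteq V_{L_2}^{\Z_k}$ if and only if $\Z_k\subseteq H$, which is precisely statement~(i) (and $L(1,0)=V_{L_2}^{\SO}$ is of this form).

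For the remaining cases $N$ is not a perfect square, so $\mathrm{Aut}_{(\cdot|\cdot)}V_{L_{2N}}=D_\infty$ by Corollary~\ref{cor:calculation_aut}. If $W$ properly contains $L(1,0)$, then Theorem~\ref{theo:m1+_in_subvoa} shows it also contains $M(1)^+=V_{L_{2N}}^{D_\infty}$ (by \eqref{fixed_point_d_infty}), and the Galois correspondence of Theorem~\ref{theo:galois_correspondence_voa} with $G=D_\infty=\mathrm{Aut}_{(\cdot|\cdot)}V_{L_{2N}}$ (a compact Lie group) yields $W=V_{L_{2N}}^H$ for some closed $H\subseteq D_\infty$. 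When $N>2$, Proposition~\ref{prop:every_sub_contains_l10} forces $W\supseteq L(1,0)$, so $W$ is either $L(1,0)$ or one of these fixed-point subalgebras, giving~(ii); when $N=2$, Theorem~\ref{theo:case_n=2} shows that either $W\supseteq L(1,0)$ --- whence $W=L(1,0)$ or $W=V_{L_4}^H$ with $H\subseteq D_\infty$ closed, by the previous sentence --- or else $W=W_t$ for some $t\in\T$, giving~(iii). Since the three ranges of $N$ are exhaustive, this completes the argument. All the real content being contained in the earlier results, the only step deserving attention is the bookkeeping in case~(i): verifying that a unitary subalgebra of $V_{L_{2k^2}}=V_{L_2}^{\Z_k}$ is again a unitary subalgebra of $V_{L_2}$, so that Theorem~\ref{theo:subvoas_vl2} is applicable, and converting the inclusion $W\subseteq V_{L_2}^{\Z_k}$ into the group condition $\Z_k\subseteq H$ through the Galois correspondence.
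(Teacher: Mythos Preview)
Your argument is correct and follows essentially the same route as the paper: reduce case~(i) to Theorem~\ref{theo:subvoas_vl2} via the identification $V_{L_{2k^2}}=V_{L_2}^{\Z_k}$ and the Galois correspondence, and handle the non-perfect-square cases by combining Theorem~\ref{theo:m1+_in_subvoa} with Proposition~\ref{prop:every_sub_contains_l10} (for $N>2$) or Theorem~\ref{theo:case_n=2} (for $N=2$) and then applying Theorem~\ref{theo:galois_correspondence_voa} with $G=D_\infty$. The only superfluous step is your appeal to Corollary~\ref{cor:calculation_aut}: for the Galois correspondence you only need $D_\infty$ to be a closed Lie subgroup of $\mathrm{Aut}_{(\cdot|\cdot)}V_{L_{2N}}$ (Proposition~\ref{prop:d_infty_inner_aut}), not the full automorphism group.
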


\begin{proof}
First of all, note that $L(1,0)\subset V_{L_{2N}}$ for all $N\in\Zplus$.
\begin{itemize}
\item[(i)] The case $k=1$ is Theorem \ref{theo:subvoas_vl2}. By Proposition \ref{prop:fixed_point_zk}, $V_{L_{2k^2}}=V_{L_2}^{\Z_k}$ for all $k\in\Zplus$ is a unitary subalgebra of $V_{L_2}$. Then the result follows from the Galois correspondence in Theorem \ref{theo:galois_correspondence_voa} and from Theorem \ref{theo:subvoas_vl2}.
\item[(ii)] By Proposition \ref{prop:every_sub_contains_l10}, every unitary subalgebra of $V_{L_{2N}}$ contains $L(1,0)$ and thus contains also $M(1)^+$ by Theorem \ref{theo:m1+_in_subvoa}. On the other hand, $M(1)^+=V_{L_{2N}}^{D_\infty}$ by Proposition \ref{prop:d_infty_inner_aut} and therefore the result follows from the Galois correspondence in Theorem \ref{theo:galois_correspondence_voa}.
\item[(iii)] If $L(1,0)\subset W$, then we can proceed as in (ii) above. The other case is covered by Theorem \ref{theo:case_n=2}.
\end{itemize}
\end{proof}
\end{section}

\begin{section}{The classification in the conformal net setting}    \label{section:conformal_nets}

Our aim in this section is to arrive at the analogue of Theorem \ref{theo:resume_theo} for conformal nets. For the sake of completeness, we start by recalling basic definitions about conformal nets. For more details, we refer to \cite[Section 1-3]{CKLW2015} and references therein.

Let $\mathcal{J}$ be the family of open, connected, non-empty and non-dense subsets (also called \textit{intervals}) of the unit circle $S^1$. For all $I\in\mathcal{J}$, we set $I':=S^1\setminus\overline{I}$.
A \textit{(irreducible) conformal net} is a family $\mathcal{A}=(\A(I))_{I\in\mathcal{J}}$ of von Neumann algebras on a fixed separable Hilbert space $\mathcal{H}$, which has the following properties:
\begin{itemize}
\item \textit{(Isotony)}. For every $I_1, I_2\in\mathcal{J}$, if $I_1\subseteq I_2$, then $\mathcal{A}(I_1)\subseteq \mathcal{A}(I_2)$.
\item \textit{(Locality)}. For every $I_1, I_2\in\mathcal{J}$, if $I_1\cap I_2=\emptyset$, then $[\mathcal{A}(I_1), \mathcal{A}(I_2)]=0$. 
\item \textit{(M\"obius covariance)}. Let $\mathrm{M\ddot{o}b} \cong \mathrm{PSL}(2,\mathbb{R})$ be the group of M\"obius transformations of the circle $S^1$. Then there exists a strongly continuous unitary representation $U$ of $\mathrm{M\ddot{o}b}$ on $\mathcal{H}$ such that
\begin{equation} 
U(\gamma)\mathcal{A}(I)U(\gamma)^{-1}=\mathcal{A}(\gamma I)
\end{equation}
for all $\gamma\in\mathrm{M\ddot{o}b}$ and $I\in\mathcal{J}$.
\item \textit{(Positivity of the energy)}. The infinitesimal generator $H$ of the one-parameter rotation subgroup of the representation $U$ (also called \textit{conformal Hamiltonian}) is a positive operator on $\mathcal{H}$.
\item \textit{(Existence and uniqueness of the vacuum)}. Let $\bigvee_{I\in\mathcal{J}}\mathcal{A}(I)$ be the von Neumann algebra generated by $\mathcal{A}(I)$ for all $I\in\mathcal{J}$. Then there exists a unique (up to a phase) $U$-invariant unit vector $\Omega\in\mathcal{H}$ such that $\Omega$ is cyclic for $\bigvee_{I\in\mathcal{J}}\mathcal{A}(I)$. $\Omega$ is called the \textit{vacuum vector} of the theory.
\item \textit{(Conformal covariance)}. Denote by $\mathrm{Diff}^+(S^1)$ the group of orientation-preserving diffeomorphisms of the unit circle. For all $I\in\mathcal{J}$, let $\mathrm{Diff}(I)\subset\mathrm{Diff}^+(S^1)$ be the subgroup of all $\gamma\in\mathrm{Diff}^+(S^1)$ such that $\gamma(z)=z$ for all $z\in I'$. Then, there exists a strongly continuous projective unitary representation of $\mathrm{Diff}^+(S^1)$ which extends $U$ and such that (we use the same symbol $U$ for the extension)
\begin{eqnarray}
U(\gamma)\mathcal{A}(I)U(\gamma)^{-1} &=&
\mathcal{A}(\gamma I) 
\hspace{0.5cm} \forall \gamma\in\mathrm{Diff}^+(S^1) \\
U(\gamma)AU(\gamma)^{-1} &=&
A \hspace{0.5cm} \forall A\in\mathcal{A}(I), \forall\gamma\in\mathrm{Diff}(I')
\, .
\end{eqnarray}
\end{itemize}
To be precise, a conformal net should actually be defined as a
triple $(\mathcal{A}, U, \Omega)$ satisfying the above properties. It can be shown that $U$ is completely determined by the pair $(\mathcal{A}, \Omega)$, see e.g. [10, Section 3.3] so that a conformal net may be identified with the pair  
$(\mathcal{A}, \Omega)$. However, we follow the usual convention to denote the conformal net simply by $\mathcal{A}$ keeping in mind all the relevant structures in the definition.

A \textit{conformal subnet} of $\mathcal{A}$ is a family $\mathcal{B}=(\mathcal{B}(I))_{I\in\mathcal{J}}$ of von Neumann algebras on $\mathcal{H}$ such that
\begin{itemize}
\item $\mathcal{B}(I)\subseteq \mathcal{A}(I)$ for all $I\in\mathcal{J}$;
\item if $I_1, I_2\in\mathcal{J}$ with $I_1\subseteq I_2$ then $\mathcal{B}(I_1)\subseteq \mathcal{B}(I_2)$;
\item $U(\gamma)\mathcal{B}(I)U(\gamma)^{-1}=\mathcal{B}(\gamma I)$ for all $\gamma\in\mathrm{M\ddot{o}b}$ and $I\in\mathcal{J}$.
\end{itemize}
Strictly speaking, $\mathcal{B}$ is not a conformal net because $\Omega$ is not a cyclic vector for $\mathcal{B}$ with respect to the Hilbert space $\mathcal{H}$. However, $\mathcal{B}$ becomes a conformal net after restricting to the closure $\mathcal{H}_\mathcal{B}$ of $\left(\bigvee_{I\in\mathcal{J}}\mathcal{B}(I)\right)\Omega$.

We define the group of \textit{automorphisms} of a net $\mathcal{A}$ as
\begin{equation}
\mathrm{Aut}\mathcal{A}:=
\left\{g\in B(\mathcal{H}) \barspace
g(\Omega)=\Omega,
\hspace{0.2cm}
g\mathcal{A}(I)g^{-1}=\mathcal{A}(I) 
\hspace{0.2cm}\forall I\in\mathcal{J} 
\right. \right\}.
\end{equation}
It follows from \cite{MTW2018} that $\mathrm{Aut}\mathcal{A}$ is a compact topological group with respect to the strong topology induced by $B(\mathcal{H})$. Let $G\subseteq\mathrm{Aut}\mathcal{A}$ be a compact subgroup. Then $\A^G$ defined by
\begin{equation}
\mathcal{A}^G(I):=\left\{A\in\mathcal{A}(I) \barspace
gAg^{-1}=A
\hspace{0.2cm} \forall g\in G
\right. \right\}.
\end{equation}
is a conformal subnet of $\A$ called a \textit{fixed point subnet}.
If $G$ is a finite group, one calls $\mathcal{A}^G$ an \textit{orbifold subnet}.

The core ingredient of the classification proof in this section is the relationship between conformal nets and unitary VOAs established in \cite{CKLW2015}. The idea there was to start with a simple unitary VOA $(V,\Omega,Y,T,\nu, (\cdot|\cdot))$ and to try to construct a conformal net $(\mathcal{A}_{V}(I))_{I\in\mathcal{J}}$ on the Hilbert space completion $\mathcal{H}$ of $V$ as follows:
\begin{equation}   \label{conf_net_from_voa}
\mathcal{A}_{V}(I):=W^*\left\{
Y(a,f) \barspace
a\in V, \hspace{0.2cm}
f\in C^\infty(S^1,\mathbb{R}), \hspace{0.2cm}
\mathrm{supp}f\subset I
\right. \right\} ,
\end{equation}
where $W^*\left\{\cdot\right\}$ indicates the smallest von Neumann algebra affiliated with a family of closed densely defined operators $Y(a,f)$ called \emph{smeared fields} (see \cite[Section 2.2]{CKLW2015}). Under an additional technical assumption on the VOA called \emph{strong locality}, all involved elements are well-defined and the construction works and does indeed define a conformal net, cf. \cite[Theorem 6.8]{CKLW2015}. Furthermore, by \cite[Theorem 6.9]{CKLW2015}, we have the following equality: 
\begin{equation}    \label{eq:correspondence_aut}
\mathrm{Aut}_{(\cdot|\cdot)}V=\mathrm{Aut}\A_V.
\end{equation}
More precisely, the extension by continuity of unitary automorphisms of $V$ to the Hilbert space completion $\mathcal{H}$
gives rise to a group isomorphism from $\mathrm{Aut}_{(\cdot|\cdot)}V$ onto $\mathrm{Aut}\A_V$.

Without the assumption of strong locality it is unclear whether or not one can define a conformal net in this way. Some of the key results from \cite{CKLW2015} we are going to use are the following, cf. \cite[Theorems 7.1, 7.5, Proposition 7.6]{CKLW2015}:
\begin{theo}   \label{theo:subnets_subvoas_corr}
Let $W$ be a unitary subalgebra of a strongly local simple unitary VOA $V$. Then $W$ is still strongly local and simple. Furthermore, there is a one-to-one correspondence between unitary subalgebra of $V$ and conformal subnets of $\A_V$ given by the map $W\mapsto \A_W$.
\end{theo}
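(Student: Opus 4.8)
The statement combines \cite[Theorems 7.1 and 7.5 and Proposition 7.6]{CKLW2015}, and the plan would be the following. \emph{Step 1 (the forward direction).} That $(W,\Omega,T,Y,\omega,\bilinear)$, with $\omega=e_W(\nu)$, is a simple unitary VOA of CFT type is already Proposition \ref{prop:cklw_unitary_subvoa}. To obtain strong locality I would use that, being a vertex subalgebra, $W$ is invariant under every mode $a_{(n)}$ with $a\in W$, and that, since $\theta(W)\subseteq W$ and $L_1(W)\subseteq W$ by Definition \ref{def:unitary_subalgebra}, the adjoint of a smeared field $Y(a,f)$ with $a\in W$ is again (the closure of) a smeared field coming from $W$; hence the Hilbert space completion $\mathcal{H}_W\subseteq\mathcal{H}$ of $W$ is a reducing subspace for all $Y(a,f)$, $a\in W$. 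Because $L^\omega_0|_W=L_0|_W$ by Proposition \ref{prop:cklw_unitary_subvoa}(ii), the energy bounds for the fields of $V$ restrict to energy bounds for the fields of $W$ relative to the conformal Hamiltonian of $W$, and the affiliation conditions that define strong locality are inherited by restricting the operators $Y(a,f)$, $a\in W$, to the reducing subspace $\mathcal{H}_W$. Thus $W$ is strongly local and simple, and \cite[Theorem 6.8]{CKLW2015} makes $\A_W$ a conformal net on $\mathcal{H}_W$; moreover each generator $Y(a,f)$ of $\A_W(I)$ is a generator of $\A_V(I)$, isotony is clear, and M\"obius covariance is inherited because the representation of $\mathrm{M\ddot{o}b}$ implementing $\A_W$ is the one built from the $L^\omega_n$, whose $n\in\{-1,0,1\}$ part agrees with $L_n$. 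So $W\mapsto\A_W$ takes values in the conformal subnets of $\A_V$.

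\emph{Step 2 (the inverse map).} Given a conformal subnet $\mathcal{B}\subseteq\A_V$, I would set $W_{\mathcal{B}}:=V\cap\mathcal{H}_{\mathcal{B}}$, with $\mathcal{H}_{\mathcal{B}}=\overline{(\bigvee_{I\in\mathcal{J}}\mathcal{B}(I))\Omega}$, and check that $W_{\mathcal{B}}$ is a unitary subalgebra: it is a graded subspace containing $\Omega$; it is closed under the modes $a_{(n)}$, since for $a\in V\cap\mathcal{H}_{\mathcal{B}}$ the smeared fields $Y(a,f)$ are affiliated with $\mathcal{B}$, hence preserve $\mathcal{H}_{\mathcal{B}}$, and $a_{(n)}b$ is recovered from $Y(a,f)b$ by concentrating the support of $f$; and it is $\theta$- and $L_1$-invariant because $\mathcal{H}_{\mathcal{B}}$ is stable under the PCT antiunitary of $\A_V$ and under its global conformal symmetries. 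One then checks that the two maps are mutually inverse: $V\cap\mathcal{H}_{\A_W}=V\cap\overline{W}=W$ by finite-dimensionality of the graded components of $W$, and $\A_{W_{\mathcal{B}}}=\mathcal{B}$ follows by combining the trivial inclusion $\A_{W_{\mathcal{B}}}\subseteq\mathcal{B}$ with the reverse one.

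\emph{Main obstacle.} The hard part is the reverse inclusion $\mathcal{B}(I)\subseteq\A_{W_{\mathcal{B}}}(I)$, i.e.\ showing that $\mathcal{B}(I)$ is already generated by the smeared fields built from the finite-energy vectors of $\mathcal{H}_{\mathcal{B}}$ that lie in $V$. This amounts to reconstructing the local field content of the conformal net $\mathcal{B}$ and identifying it with $V\cap\mathcal{H}_{\mathcal{B}}$, using the concrete realization of $V$ as a graded dense subspace of $\mathcal{H}$, and it is exactly what is established in \cite[Theorems 7.1 and 7.5 and Proposition 7.6]{CKLW2015}.
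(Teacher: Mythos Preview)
The paper does not give its own proof of this theorem; it is stated as a quotation of \cite[Theorems 7.1, 7.5, Proposition 7.6]{CKLW2015} and used as a black box. Your proposal identifies exactly the same three results as the source and then goes further by sketching their content, so your approach is consistent with---and more explicit than---what the paper does.

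One small caution about your sketch of Step~2: the assertion that for $a\in V\cap\mathcal{H}_{\mathcal{B}}$ the smeared fields $Y(a,f)$ are affiliated with $\mathcal{B}(I)$ is not automatic from the definition of a conformal subnet, since $\mathcal{B}$ is given abstractly and not a priori as a net generated by smeared fields. In \cite{CKLW2015} this step is handled via the projection $e_{\mathcal{B}}$ onto $\mathcal{H}_{\mathcal{B}}$ and the interplay between Haag duality for $\mathcal{A}_V$, the Reeh--Schlieder property, and energy bounds; you rightly flag the reverse inclusion $\mathcal{B}(I)\subseteq\mathcal{A}_{W_{\mathcal{B}}}(I)$ as the main obstacle and defer it to the cited reference, which is exactly what the paper does as well.
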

\begin{prop}   \label{prop:corr_fixed_point}
Let $(V,(\cdot|\cdot))$ be a strongly local simple unitary VOA and $G$ be a closed subgroup of $\mathrm{Aut}_{(\cdot|\cdot)}V=\mathrm{Aut}\A_V$. Then $\A_V^G=\A_{V^G}$.
\end{prop}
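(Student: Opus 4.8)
The plan is to prove the two inclusions $\A_{V^G}\subseteq\A_V^G$ and $\A_V^G\subseteq\A_{V^G}$ of conformal subnets of $\A_V$ separately, the first being essentially formal and the second resting on the bijection of Theorem \ref{theo:subnets_subvoas_corr}. As a preliminary I record that $V^G$ is a unitary subalgebra of $V$ by Example \ref{ex:orbifold}, so by Theorem \ref{theo:subnets_subvoas_corr} it is strongly local and simple and $\A_{V^G}$ is a genuine conformal subnet of $\A_V$; and that, under the identification \eqref{eq:correspondence_aut}, an element $g\in G\subseteq\mathrm{Aut}_{(\cdot|\cdot)}V=\mathrm{Aut}\A_V$ acts on $\mathcal{H}$ as the unitary extension of the corresponding VOA automorphism and intertwines smeared fields by $gY(a,f)g^{-1}=Y(g(a),f)$, which follows from $g a_{(n)}g^{-1}=g(a)_{(n)}$ together with the construction of $Y(a,f)$ in \cite[Section 6]{CKLW2015}.

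For $\A_{V^G}\subseteq\A_V^G$: fix $I\in\mathcal{J}$, a vector $a\in V^G$ and $f\in C^\infty(S^1,\R)$ with $\mathrm{supp}\,f\subset I$. Since $g(a)=a$ for $g\in G$, the closed operator $Y(a,f)$ satisfies $gY(a,f)g^{-1}=Y(a,f)$ and hence is affiliated with $\A_V^G(I)=\{A\in\A_V(I):gAg^{-1}=A\ \forall g\in G\}$. By \eqref{conf_net_from_voa} the von Neumann algebra $\A_{V^G}(I)$ is generated by precisely such operators, so $\A_{V^G}(I)\subseteq\A_V^G(I)$.

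For the reverse inclusion I would use that $\A_V^G$, being a fixed point subnet of the compact group $G$, is a conformal subnet of $\A_V$; by Theorem \ref{theo:subnets_subvoas_corr} there is then a unique unitary subalgebra $W\subseteq V$ with $\A_W=\A_V^G$, and it suffices to identify $W$ with $V^G$. If $a\in W$ then $Y(a,f)$ (for $\mathrm{supp}\,f\subset I$) is affiliated with $\A_W(I)=\A_V^G(I)$, hence commutes with every $g\in G$, so $Y(g(a)-a,f)=gY(a,f)g^{-1}-Y(a,f)=0$ for all such $f$ and all $I$; since a vector $b\in V$ for which all the smeared fields $Y(b,f)$ vanish must itself be $0$, this gives $g(a)=a$, i.e.\ $W\subseteq V^G$. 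Therefore $\A_W\subseteq\A_{V^G}$, and combining with the first inclusion we get $\A_{V^G}\subseteq\A_V^G=\A_W\subseteq\A_{V^G}$; thus $\A_W=\A_{V^G}$, and the injectivity part of Theorem \ref{theo:subnets_subvoas_corr} forces $W=V^G$, whence $\A_V^G=\A_W=\A_{V^G}$.

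The step I expect to be the real content is $W\subseteq V^G$: it is the only place where one must pass back from net data ($\A_W$ being pointwise $G$-fixed) to the VOA, and it requires both the precise relationship between $W$ and the generators of $\A_W$ and the non-degeneracy of the state-field correspondence under smearing (a vanishing smeared field forces the underlying state to vanish). The remaining ingredients — that $V^G$ is a unitary subalgebra, the covariance identity for $g\in G$, and the manipulations with operators affiliated to the generated von Neumann algebras — are routine once Theorem \ref{theo:subnets_subvoas_corr} and \eqref{eq:correspondence_aut} are available.
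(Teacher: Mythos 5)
Your argument is correct, but it is worth noting that the paper does not actually prove this proposition: it is quoted verbatim from \cite[Proposition 7.6]{CKLW2015} together with Theorem \ref{theo:subnets_subvoas_corr}, so there is no in-text proof to compare against. Your reconstruction is a legitimate self-contained derivation from exactly the ingredients the paper makes available: the covariance $gY(a,f)g^{-1}=Y(g(a),f)$ from \eqref{eq:correspondence_aut}, the generating description \eqref{conf_net_from_voa}, and the bijection of Theorem \ref{theo:subnets_subvoas_corr}. The easy inclusion is fine (an operator affiliated with $\A_V(I)$ and fixed by all $g\in G$ has its polar decomposition and spectral projections in $\A_V(I)\cap G'=\A_V^G(I)$, so the generated von Neumann algebra lands in the fixed points). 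For the reverse inclusion you correctly isolate the only nontrivial step, namely $W\subseteq V^G$; your route through ``$Y(g(a)-a,f)=0$ for all $f$ forces $g(a)=a$'' works, though it quietly relies on a positive-energy/Reeh--Schlieder-type non-degeneracy of smearing, and there is a slightly cheaper variant: since $gA\Omega=gAg^{-1}\Omega=A\Omega$ for $A\in\A_V^G(I)$, the subspace $\mathcal{H}_{\A_V^G}$ consists of $G$-fixed vectors, and $W=V\cap\mathcal{H}_{\A_V^G}\subseteq V^G$ follows directly from the way the inverse map of Theorem \ref{theo:subnets_subvoas_corr} is constructed in \cite{CKLW2015}. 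Also note that your closing appeal to injectivity is redundant: the chain $\A_{V^G}\subseteq\A_V^G=\A_W\subseteq\A_{V^G}$ already yields the claimed equality of nets. One background fact you use implicitly and should at least cite is that $\A_V^G$ is M\"obius covariant (i.e.\ that elements of $\mathrm{Aut}\A_V$ commute with $U$), which the paper takes for granted when it introduces fixed point subnets.
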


Let us now look at the models we are investigating in this paper. We write $\A_{\Uone}$ for the \textit{$\Uone$-current (conformal) net}, and for every positive integer $N\geq1$, let $\A_{\Uone_{2N}}$ denote the extension conformal net of $\A_{\Uone}$ built from the current $J(z)$ as described in \cite[Proposition 4.1]{BMT1988}. We call it an \textit{even rank-one lattice conformal net}. We point out that such an extension is maximal if and only if $N$ is not a multiple of a perfect square. As a matter of fact, if $N=N'k^2$ for positive integers $N'$ and $k$, then $\A_{\Uone_{2N'k^2}}\subset\A_{\Uone_{2N'}}$ (cf. \cite[p. 37]{BMT1988}). 
We indicate with $\A_{\Vir,c}$ the \emph{Virasoro (conformal) net} with central charge $c\in\left\{\frac12,1\right\}$. Then \cite[Example 8.8]{CKLW2015} can be summarized as follows:

\begin{prop}  \label{prop:vln-u12n}
For every positive integer $N$, the rank-one lattice type VOA $V_{L_{2N}}$ is strongly local and $\A_{V_{L_{2N}}}=\A_{\Uone_{2N}}$.
\end{prop}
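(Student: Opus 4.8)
The plan is to obtain both assertions as the rank-one specialisation of the general results of \cite{CKLW2015}; concretely, one reduces strong locality to a statement about a finite generating set of fields, and then matches the resulting net with the extension of \cite{BMT1988}.

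\emph{Strong locality.} First I would observe that $V_{L_{2N}}$ is generated as a vertex algebra by the current vector $J_{-1}\Omega\otimes 1$ together with the two primary vectors $\Omega\otimes e^{\pm\sqrt{2N}J}$ of conformal weight $N$; equivalently, by the $\theta$-invariant quasi-primary vectors $iJ_{-1}\Omega\otimes 1$, $\Omega\otimes(e^{\sqrt{2N}J}+e^{-\sqrt{2N}J})$ and $i\,\Omega\otimes(e^{\sqrt{2N}J}-e^{-\sqrt{2N}J})$. Indeed, the Heisenberg modes applied to these vectors, together with the $n$-th products of the lattice vertex operators (and Proposition \ref{prop:cur_primary4}, which recovers $J_{-1}\Omega\otimes 1$ from the charged vectors), span all of $V_{L_{2N}}$. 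The fields attached to this generating set are the $\Uone$-current field $J(z)$, which satisfies linear energy bounds, and the lattice vertex operators $Y(\Omega\otimes e^{\pm\sqrt{2N}J},z)$, which satisfy polynomial energy bounds. One then checks that the smeared versions of these generating fields, with supports in disjoint intervals, strongly commute, and applies the strong-locality criterion of \cite{CKLW2015} for simple unitary VOAs generated by $\theta$-invariant quasi-primary vectors whose fields satisfy energy bounds. This yields strong locality of $V_{L_{2N}}$; alternatively, since $V_{L_{2N}}$ is just the rank-one even lattice VOA, one may simply quote \cite[Example 8.8]{CKLW2015}, where all lattice VOAs are shown to be strongly local.

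\emph{Identification of the net.} The Heisenberg subalgebra $M(1)\subset V_{L_{2N}}$ is a unitary subalgebra, and $\A_{M(1)}$ is the $\Uone$-current net $\A_{\Uone}$ (again as in \cite{CKLW2015}). By Theorem \ref{theo:subnets_subvoas_corr} applied to $M(1)\subset V_{L_{2N}}$ we obtain a conformal subnet $\A_{\Uone}=\A_{M(1)}\subset\A_{V_{L_{2N}}}$. On the level of Hilbert spaces, the decomposition $V_{L_{2N}}=\bigoplus_{n\in\Z}M(1)\otimes_\C\C e^{\sqrt{2N}nJ}$ displays the completion $\mathcal{H}$ as the direct sum over $n\in\Z$ of the charge-$\sqrt{2N}n$ representations of $\A_{\Uone}$, which is precisely the Hilbert space underlying the extension $\A_{\Uone_{2N}}$ built in \cite[Proposition 4.1]{BMT1988}. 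Moreover $\A_{V_{L_{2N}}}(I)$ is generated by $\A_{\Uone}(I)$ together with the smeared vertex operators $Y(\Omega\otimes e^{\pm\sqrt{2N}J},f)$ with $\mathrm{supp} f\subset I$, and these coincide with the charged fields used in \cite{BMT1988} to generate $\A_{\Uone_{2N}}$. Hence $\A_{V_{L_{2N}}}(I)=\A_{\Uone_{2N}}(I)$ for every $I\in\mathcal{J}$. Equivalently, one may invoke the uniqueness of the local extension of $\A_{\Uone}$ generated by charged fields of the relevant charge structure.

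\emph{Main obstacle.} The technical heart is the strong-locality step: establishing the polynomial energy bounds for the lattice vertex operators $Y(\Omega\otimes e^{\pm\sqrt{2N}J},z)$ and upgrading ordinary vertex-algebra locality to operator-algebraic strong commutativity of the unbounded smeared fields. This is exactly what is carried out for lattice VOAs in \cite{CKLW2015}, so for the present purposes it suffices to cite \cite[Example 8.8]{CKLW2015}; the comparison with the net of \cite{BMT1988} is then a routine check of the generating charged fields, once $\A_{V_{L_{2N}}}$ is known to be a well-defined conformal net.
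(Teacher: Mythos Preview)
Your proposal is correct and aligns with the paper's approach: the paper does not give an independent proof of this proposition but simply presents it as a summary of \cite[Example 8.8]{CKLW2015}, which is exactly the reference you invoke for both the strong locality and the identification $\A_{V_{L_{2N}}}=\A_{\Uone_{2N}}$. Your write-up merely unpacks what lies behind that citation (generating set, energy bounds, matching with the BMT extension), so it is a more detailed version of the same argument rather than a different route.
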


In the following, combining Proposition \ref{prop:vln-u12n}, Proposition \ref{prop:corr_fixed_point} and Theorem \ref{theo:subnets_subvoas_corr}, we can translate the results obtained for rank-one lattice type VOAs in Section \ref{section:rank_one_lattice}, \ref{section:compl_class}, especially Theorem \ref{theo:resume_theo}, to the conformal net setting.
 
First, by \eqref{eq:correspondence_aut} there is a well-defined action of $D_\infty$ on $\A_{\Uone_{2N}}$, for every $N\in\Zplus$. Thus we have that $\A_{\Uone_{2Nk^2}}=\A_{\Uone_{2N}}^{\Z_k}$ for all positive integers $N$ and $k$ thanks to \eqref{fixed_point_zk}. Furthermore, we have a well-defined action of $\SO$ on $\A_{\Uone_2}$ which extends the one of $D_\infty$. Set $\A_t:=\A_{W_t}\cong\A_{\Vir,\frac12}$ for all $t\in\T$ with $W_t$ as in Theorem \ref{theo:case_n=2}. Then, we can state the following classification result:
 
\begin{theo}    \label{theo:class_conf_net_sett}
The nontrivial conformal subnets $\A$ of the even rank-one lattice conformal nets $\A_{U(1)_{2N}}$ are classified as follows. Apart from the Virasoro conformal net $\A_{\Vir,1}$, we have:
\begin{itemize}
\item[(i)] If $N=k^2$ for some positive integer k then, after the identification $\A_{\Uone_{2k^2}}=\A_{\Uone_2}^{\Z_k}$, $\A= \A_{\Uone_2}^{H}$ for some closed subgroup $H\subseteq SO(3)$ containing $\Z_k$.
\item[(ii)] If $N>2$ is not a perfect square then $\A=\A_{U(1)_{2N}}^{H}$ for some closed subgroup $H\subseteq D_\infty$.
\item[(iii)] If $N=2$, then either $\A= \A_{U(1)_4}^{H}$ for some closed subgroup $H\subseteq D_\infty$ or $\A=\A_t$ for some $t\in\T$.  
\end{itemize}
\end{theo}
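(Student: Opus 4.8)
The statement is a translation of Theorem \ref{theo:resume_theo} into the conformal net language via the VOA-to-net correspondence of \cite{CKLW2015}, so the plan is to transport the classification case by case. First I would invoke Proposition \ref{prop:vln-u12n}: each $V_{L_{2N}}$ is strongly local and simple, with $\A_{V_{L_{2N}}}=\A_{\Uone_{2N}}$. By Theorem \ref{theo:subnets_subvoas_corr} the assignment $W\mapsto\A_W$ is then a bijection between the unitary subalgebras of $V_{L_{2N}}$ and the conformal subnets of $\A_{\Uone_{2N}}$, and every such $W$ is again strongly local and simple; moreover, by \eqref{conf_net_from_voa} the trivial subalgebra $\C\Omega\otimes1$ maps to the trivial subnet $I\mapsto\C 1$ and the map is inclusion-preserving, so that nontrivial subalgebras correspond precisely to nontrivial subnets. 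It therefore suffices to run $W$ through the list in Theorem \ref{theo:resume_theo} and identify the associated net $\A_W$ in each case.

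For the Virasoro pieces: $L(1,0)\subset V_{L_{2N}}$ is generated by the conformal vector $\nu\otimes1$ of central charge $1$, so by \eqref{conf_net_from_voa} the net $\A_{L(1,0)}$ is the one generated by the smeared stress-energy field, that is $\A_{L(1,0)}=\A_{\Vir,1}$, which is essentially the definition of the $c=1$ Virasoro net. Likewise, since by Theorem \ref{theo:case_n=2} each $W_t$ is unitarily isomorphic to $L\left(\frac12,0\right)$, the induced isomorphism of nets gives $\A_{W_t}\cong\A_{\Vir,\frac12}$, and we set $\A_t:=\A_{W_t}$.

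For the fixed-point pieces I would use Proposition \ref{prop:corr_fixed_point}: for every closed subgroup $H$ of $\mathrm{Aut}_{(\cdot|\cdot)}V_{L_{2N}}=\mathrm{Aut}\A_{\Uone_{2N}}$ one has $\A_{V_{L_{2N}}^H}=\A_{\Uone_{2N}}^H$, where by \eqref{eq:correspondence_aut} the action of $H$ on $\A_{\Uone_{2N}}$ is the continuous extension of its action on $V_{L_{2N}}$. Applied to $\Z_k\subset D_\infty$ together with Proposition \ref{prop:fixed_point_zk} (which gives $V_{L_{2k^2}}=V_{L_2}^{\Z_k}$), this produces the identification $\A_{\Uone_{2k^2}}=\A_{\Uone_2}^{\Z_k}$ used in part (i), and the $\SO$-action on $\A_{\Uone_2}$ extending the $D_\infty$-action is the image under \eqref{eq:correspondence_aut} of the $\SO$-action on $V_{L_2}$ from Theorem \ref{theo:subvoas_vl2}. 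With these identifications each case of Theorem \ref{theo:resume_theo} transfers verbatim: in (i), $W=V_{L_2}^H$ with $\Z_k\subseteq H\subseteq\SO$ gives $\A_W=\A_{\Uone_2}^H$; in (ii), $W=V_{L_{2N}}^H$ with $H\subseteq D_\infty$ gives $\A_W=\A_{\Uone_{2N}}^H$; in (iii), $W=V_{L_4}^H$ with $H\subseteq D_\infty$ gives $\A_W=\A_{\Uone_4}^H$, while $W=W_t$ gives $\A_W=\A_t$.

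There is no genuine obstacle here; the one point that must be handled carefully is the bookkeeping of the symmetry groups. Concretely, one has to make sure that the copies of $D_\infty$ (and, for $N$ a perfect square, of $\SO$) acting on the nets $\A_{\Uone_{2N}}$ are exactly the images under the isomorphism \eqref{eq:correspondence_aut} of the corresponding groups acting on the VOAs $V_{L_{2N}}$, so that the same subgroups $H$ index the subnets and the unitary subalgebras; this follows from the naturality of the construction $V\mapsto\A_V$ but should be made explicit, since otherwise the correspondence between the cases of Theorem \ref{theo:resume_theo} and those of Theorem \ref{theo:class_conf_net_sett} would not be literal.
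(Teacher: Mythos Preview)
Your proposal is correct and follows essentially the same approach as the paper: the paper itself derives Theorem \ref{theo:class_conf_net_sett} by combining Proposition \ref{prop:vln-u12n}, Theorem \ref{theo:subnets_subvoas_corr}, Proposition \ref{prop:corr_fixed_point} and \eqref{eq:correspondence_aut} to transport Theorem \ref{theo:resume_theo} to the conformal net setting, exactly as you outline. Your version is in fact more explicit than the paper's, which leaves the case-by-case transfer and the bookkeeping of the symmetry groups largely implicit.
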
 
 
Regarding the case $N=1$, recall that the classification has been known for a while, cf. \cite[Theorem 3.2]{Carpi1999} and \cite[Proposition 5]{Rehren1994}. Recall also that $\A_{\Uone_2}$ is isomorphic to the level 1 loop group net $\mathcal{A}_{\SU_1}$ (see \cite[Section 5B]{BMT1988}).

 Let $\mathcal{A}_{\Uone_{2N}}^+ := \mathcal{A}_{\Uone_{2N}}^{D_1}$ and $\mathcal{A}_{\Uone}^+ := \mathcal{A}_{\Uone}^{D_1}$, then we have the conformal net results corresponding to \eqref{fixed_point_t}, \eqref{fixed_point_d_infty}, \eqref{fixed_point_dk} and Remark \ref{rem:action_d_k^t}, namely
\begin{eqnarray}
\A_{\Uone_{2N}}^\T &=& \A_{\Uone} \, ,  \\
\A_{\Uone_{2N}}^{D_\infty} &=& \A_{\Uone}^+ \, ,  \\
\A_{\Uone_{2N}}^{D_k} &=& \A_{\Uone_{2Nk^2}}^+ \, , \\
\A_{\Uone_{2N}}^{D_k^t} &=& g_{2N,t}\A_{\Uone_{2Nk^2}}^+( g_{2N,t})^{-1}\, .
\end{eqnarray}

We can summarize the correspondence between some of the subtheories of rank-one lattice type models in the VOA and in the conformal net setting with the following diagram:
\begin{equation*}  
\xymatrix{
& \mathcal{A}_{\Uone_{2N}}^+ \ar@{<-->}[rrr]\ar@{^{(}->}[rd]& & & V_{L_{2N}}^+ \ar@{_{(}->}[ld] \\
\mathcal{A}_{\mathfrak{Vir},1} \ar@{<-->}@/_3.5cm/[rrrrr] \ar@{^{(}->}[r] \ar@{^{(}->}[ur] \ar@{^{(}->}[dr]&  \mathcal{A}_{\Uone}^+ \ar@{^{(}->}[u]\ar@{^{(}->}[r]\ar@{^{(}->}[d]& \mathcal{A}_{\Uone_{2N}} \ar@{<-->}[r]& V_{L_{2N}}&  M(1)^+ \ar@{^{(}->}[u]\ar@{_{(}->}[l]\ar@{^{(}->}[d] & L(1,0)\ar@{_{(}->}[l] \ar@{_{(}->}[ul] \ar@{_{(}->}[dl] \\
&\mathcal{A}_{\Uone} \ar@{<-->}[rrr]\ar@{^{(}->}[ru]& & & M(1) \ar@{_{(}->}[lu] \\
& & & & \\ 
&&&&
}
\end{equation*}

\end{section}

\noindent\textbf{Acknowledgements.} 
S.C. would like to thank Yasuyuki Kawahigashi for some useful explanations.

\bigskip

\end{document}